\documentclass[11pt,oneside,reqno]{amsart}
\usepackage{geometry}
\geometry{left=17mm, right=17mm, top=20mm, bottom=20mm}

\newcommand{\uA}{\underline{A}}
\newcommand{\QISG}{\mathrm{Q}\text{-}\mathrm{isog}}
\newcommand{\Isom}{\mathrm{Isom}}
\newcommand{\ldiv}{[\ell^\infty]}
\newcommand{\pdiv}{[p^\infty]}
\newcommand{\bfR}{\mathbf{R}}
\newcommand{\bfG}{\mathbf{G}}

\newcommand{\bfSp}{\mathbf{Sp}}
\newcommand{\bfGSp}{\mathbf{GSp}}

\newcommand{\bfPSp}{\mathbf{PSp}}

\newcommand{\Nm}{\mathrm{Nm}}
\newcommand{\Art}{\mathrm{Art}}
\newcommand{\Gal}{\mathrm{Gal}}
\newcommand{\GNrm}{\mathrm{GNrm}}

\newcommand{\etrm}{\mathrm{\acute{e}t}}

\newcommand{\degrm}{\mathrm{deg}}

\newcommand{\Trrm}{\mathrm{Tr}}

\newcommand{\ab}{\mathrm{ab}}
\newcommand{\GL}{\mathrm{GL}}

\newcommand{\Res}{\mathrm{Res}}

\newcommand{\ad}{\mathrm{ad}}

\newcommand{\proj}{\mathrm{pr}}

\newcommand{\ssrm}{\mathrm{ss}}
\newcommand{\Aut}{\mathrm{Aut}}
\newcommand{\End}{\mathrm{End}}

\newcommand{\crys}{\mathrm{crys}}

\newcommand{\Nrm}{\mathrm{Nrm}}
\newcommand{\CM}{\mathrm{CM}}
\newcommand{\A}{\mathbb{A}}

\newcommand{\C}{\mathbb{C}}

\newcommand{\F}{\mathbb{F}}
\newcommand{\G}{\mathbb{G}}

\newcommand{\N}{\mathbb{N}}

\newcommand{\Q}{\mathbb{Q}}
\newcommand{\R}{\mathbb{R}}

\newcommand{\Z}{\mathbb{Z}}

\newcommand{\Acal}{\mathcal{A}}

\newcommand{\Ccal}{\mathcal{C}}

\newcommand{\Gcal}{\mathcal{G}}
\newcommand{\Hcal}{\mathcal{H}}

\newcommand{\Ncal}{\mathcal{N}}
\newcommand{\Ocal}{\mathcal{O}}

\newcommand{\Rcal}{\mathcal{R}}
\newcommand{\Scal}{\mathcal{S}}
\newcommand{\Tcal}{\mathcal{T}}

\newcommand{\Gbf}{\mathbf{G}}
\newcommand{\Hbf}{\mathbf{H}}

\newcommand{\Pbf}{\mathbf{P}}

\newcommand{\Rbf}{\mathbf{R}}

\newcommand{\Ubf}{\mathbf{U}}

\newcommand{\hfrak}{\mathfrak{h}}
\newcommand{\lfrak}{\mathfrak{l}}
\newcommand{\pfrak}{\mathfrak{p}}

\newcommand{\Cfrak}{\mathfrak{C}}
\newcommand{\Dfrak}{\mathfrak{D}}

\newcommand{\Hfrak}{\mathfrak{H}}
\newcommand{\Lfrak}{\mathfrak{L}}
\newcommand{\Mfrak}{\mathfrak{M}}
\newcommand{\Nfrak}{\mathfrak{N}}
\newcommand{\Pfrak}{\mathfrak{P}}

\usepackage{imakeidx}
\makeindex[columns=3]
\makeindex[columns=2,options= -s index_style_xyz.ist]
\usepackage[utf8]{inputenc}
\usepackage{tgtermes}
\usepackage[english]{babel}
\usepackage{amsmath,
	mathtools,
	amssymb,
	graphicx,
	bbm,
	xcolor,
	amsthm,
	mathrsfs,
	imakeidx,
	mathdots}
\usepackage[all]{xy}
\usepackage[backref=page]{hyperref}
\hypersetup{}
\usepackage{hyperref}
\usepackage{tikz-cd}
\usetikzlibrary{decorations.pathmorphing}
\pagestyle{plain}
\usepackage{dynkin-diagrams}
\usepackage{listings}
\usepackage{bm}
\usepackage[normalem]{ulem}
\usepackage{yhmath}
\lstset{
	language=[LaTeX]TeX,
	breaklines=true,
	basicstyle=\tt\scriptsize,
	keywordstyle=\color{blue},
	identifierstyle=\color{magenta},
}
\makeatletter
\@namedef{subjclassname@2020}{%
	\textup{2020} Mathematics Subject Classification}
\makeatother

\theoremstyle{plain}

\newtheorem{theorem}{Theorem}[section]
\newtheorem{corollary}[theorem]{Corollary}
\newtheorem{definition}[theorem]{Definition}
\newtheorem{lemma}[theorem]{Lemma}
\newtheorem{proposition}[theorem]{Proposition}

\newtheorem{remark}[theorem]{Remark}

\newtheorem{example}[theorem]{Example}

\newtheorem{question}[theorem]{Question}

\newtheorem*{theorem-no-num}{Theorem}
\newtheorem*{proposition-no-num}{Proposition}
\newtheorem*{corollary-no-num}{Corollary}

\newenvironment{taggedtheorem}[1]
{\taggedtheoremx}
{\endtaggedtheoremx}

\usepackage[shortlabels]{enumitem}

\makeindex

\begin{document}
	\title{Simultaneous supersingular reductions of abelian varieties}
	\author{Xiaoyu Zhang}
	\address{Universit\"{a}t Duisburg-Essen,
		Fakult\"{a}t f\"{u}r Mathematik,
		Mathematikcarr\'{e}e
		Thea-Leymann-Straße 9,
		45127 Essen,
		Germany}
	\email{xiaoyu.zhang@uni-due.de}
	\date{\today}
	\subjclass[2020]{11G15, 14K22, 22D40}
	\keywords{Abelian varieties, Hecke orbits, Mazur's conjecture, Ratner's theorems}
	\maketitle

	\begin{abstract}
		
		For a point $x_0$ in a Shimura variety attached to a Shimura datum of Hodge type $(G,X)$, we have an associated abelian scheme $A_0$. Fixing a non-empty finite set $\Scal$ of primes, we consider the simultaneous supersingular  reduction modulo $\ell\in\mathcal{S}$ of (several copies of) $p$-adic Hecke orbits of $A_0$. We give a precise description of the image of this map. As an application, we give a more conceptual proof of Mazur's conjecture on non-torsionness of higher Heegner points on an abelian variety which is a quotient of the Jacobian of a Shimura curve. Our arguments simplify those of C.Cornut and V.Vatsal (\cite{Cornut2002,Vatsal2002,CornutVatsal2007}) in two important aspects: (1) we do not need to assume the $p$-adic group $G^1(\Q_p)/Z_{G^1(\Q_p)}$ to be simple; (2) we do not need to consider separately the ``geometric" part and ``chaotic" part in the Hecke orbits.
		
	\end{abstract}

	\tableofcontents

	\section{Introduction}
	B.Mazur conjectured in the Proceedings of 1983 ICM (\cite{Mazur1983}) that for any modular elliptic curve $E$ over $\Q$,\footnote{By the work of Breuil,Conrad,Diamond and Taylor (\cite{BreuilConradDiamondTaylor2001}), we know now that every elliptic curve over $\Q$ is modular.} there exists Heegner points of prime power conductors on $E(\overline{\Q})$ which are not torsion. This conjecture has deep applications in Iwasawa Main Conjecture via the construction of certain Heegner point Kolyvagin systems (\cite{Howard2004}).
	In this paper, we give a proof of a generalization of Mazur's conjecture for any abelian varieties that are quotients of Jacobians of Shimura curves. Our proof is much simpler and more conceptual than \cite{Cornut2002,Vatsal2002,CornutVatsal2007}. Such a result was used in \cite{Howard2004b} to prove one direction divisibility of Iwasawa Main Conjecture for such type abelian varieties.

	The previous proofs of Mazur's conjecture, as given in \cite{Cornut2002,Vatsal2002,CornutVatsal2007}, considered a \emph{geometric} part and a \emph{chaotic} part of the action of certain Galois group, the former of which involves a very intricate and careful analysis of Shimura curves of different levels. Our proof, however, is achieved via studying the image of simultaneous supersingular reductions of abelian varieties. We give a precise description of the image and Mazur's conjecture follows easily from this result. We do not need to consider these two parts as in \emph{loc.cit}. Instead, our description of the image already encodes both the \emph{geometric} part and the \emph{chaotic} part, and our result is more transparent and robust, which easily generalizes to Shimura varieties of Hodge type.

	As the main body of the article is about simultaneous supersingular reduction of abelian varieties and Mazur's conjecture is just an easy consequence of it, in this introduction, we will follow this line and first explain our main results on simultaneous supersingular reductions.

	\subsection{Simultaneous supersingular reductions of abelian varieties}
	Distributions of subvarieties in a Shimura variety is of fundamental importance in algebraic number theory, in particular in the study of arithmetic properties of Shimura varieties. Moreover, they have many surprising and interesting applications in André-Oort conjecture, Hecke orbit conjecture, Mazur's conjecture on Heegner points, etc.

	There is much research on the equidistribution of subvarieties in a Shimura variety $Sh$ in different settings: in the archimedean case $ Sh_{\mathbb{C}}$, \cite{ClozelUllmo2004a} considered strongly special subvarieties of positive dimension and proved a variant of André-Oort conjecture; \cite{Duke1988,EdixhovenYafaev2003,ClozelUllmo2004b,ShouwuZhang05} considered CM points and showed that a sequence of CM points which is `generic' in a certain sense is equidistributed. In the non-archimedean case $ Sh_{\mathbb{Q}_\ell}$, \cite{Disegni2022,GorenKassaei2021,HerreroMensaresRivera-Letelier2020,HerreroMensaresRivera-Letelier2021} considered CM points and studied the $\ell$-adic dynamics of Galois orbits/Hecke orbits of CM points. In the characteristic $\ell$ case $ Sh_{\overline{\mathbb{F}}_\ell}$, \cite{ChaiOort2019} considered an arbitrary point $x_0$ and showed that the Hecke orbit of such a point is Zariski dense in the central leaf of $ Sh_{\overline{\mathbb{F}}_\ell}$ containing $x_0$.

	In the following we write $ Sh_{k}$ for a Shimura variety of Hodge type over $k=\overline{\mathbb{F}}_\ell$ or $\mathbb{C}$.
	Fix a prime number $p\neq\mathrm{char}(k)$. The $p$-adic Hecke orbit $\mathcal{H}_p(x_0)$ of a point $x_0\in  Sh_{k}$ is the set of points $x\in  Sh_{k}$ related to $x$ by $p$-power isogenies. This notion is very important in the study of geometric properties of Shimura varieties in positive characteristic. In \cite{Cornut2002}, the author relates these two orbits from characteristic $0$ ($k=\mathbb{C}$) to characteristic $\ell$ ($k=\overline{\mathbb{F}}_\ell$) and studied the simultaneous reductions at several supersingular primes of Galois conjugates of an elliptic curve $E_0$ with CM. It is shown that as long as the Galois conjugates satisfy certain `rationality' condition, then this reduction map is always surjective onto the set of all supersingular elliptic curves. More refined results are obtained in \cite{CornutVatsal2007}.

	In this article, we study simultaneous supersingular reductions of (\emph{not necessarily CM}) abelian varieties at several different primes. Our main result gives a precise description of the image of this map under very mild conditions. Many of the results mentioned above in the archimedean case or non-archimedean case use techniques from ergodic theory, our work resonates well in this aspect. We should emphasize that in this article the abelian variety $A_0$ does not necessarily have CM. This case is not treated in previous works. However, our method and result illustrate that whether $A_0$ has CM or not, we can deal with the simultaneous reduction map on the same footing.

	CM points on modular curves and Shimura curve have indeed many deep applications, in particular in connection with Birch--Swinnerton-Dyer conjecture, Iwasawa Main Conjecture, Euler systems, Heegner points, Gross-Zagier formulas. There has been and continues to be an extensive research on CM points (see, for example, \cite{Howard2004,Vatsal2002,Nekovar2007,WeiZhang2014}, just to name a few references). Our result shows that the non-CM points on these curves contains also a lot arithmetic information and might therefore have many potential applications. Of course, when $A_0$ has CM, by the theory of complex multiplication, its Galois orbit is a subset of its toric orbit, and thus we recover the results in previous works for CM abelian variety $A_0$.

	Now let's introduce the main questions we are concerned with and the main results we prove in this article. We fix a finite set $\mathcal{S}$ of primes $\ell\neq p$ such that $x_0\in  Sh_\mathbb{C}$ has good reduction $x_{0}^{(\ell)}:=x_0(\mathrm{mod}\,\ell)$ at all $\ell\in\mathcal{S}$ and consider the distribution property of the simultaneous reduction $(x^{(\ell)})_{\ell\in\mathcal{S}}$ for $x\in\mathcal{H}_p(x_0)$, that is,
	
	\begin{question}\label{question-1}
		Is the following simultaneous reduction map surjective?
		\[
		\bfR_{\mathcal{S};p}
		\colon
		\mathcal{H}_p(x_0)
		\to
		\prod_{\ell\in\mathcal{S}}
		\mathcal{H}_p(x_{0}^{(\ell)}),
		\quad
		x
		\mapsto
		(x^{(\ell)})_{\ell\in\mathcal{S}}
		\]
	\end{question}
	In this article we restrict ourselves to Shimura varieties $ Sh_\mathbb{C}$ of Hodge type with $B=\mathbb{Q}$ and answer this question for a large class of points $x_0\in  Sh_\mathbb{C}$ (including CM points), assuming that the reduction $x_{0}^{(\ell)}$ is \emph{supersingular} for all $\ell\in\mathcal{S}$. In fact, for each $\ell\in\mathcal{S}$, we fix a finite set $\widetilde{\mathcal{T}}_\ell$ of elements in $\Gal((K')^\mathrm{ab}/K')$ where $K'$ is the reflex field of $x_0$. We give a complete answer to the following question which refines the preceding one ($\widetilde{\mathcal{T}}:=\{\widetilde{\mathcal{T}}_\ell\mid\ell\in\mathcal{S}\}$)
	\begin{question}\label{question-2}
		What is the image of the following simultaneous supersingular reduction map?
		\[
		\bfR_{\mathcal{S},\widetilde{\mathcal{T}};p}
		\colon
		\mathcal{H}_p(x_0)
		\mapsto
		\prod_{\ell\in\mathcal{S},\sigma\in\widetilde{\mathcal{T}}_{\ell}}
		\mathcal{H}_p(x_{0}^{(\ell)}),
		\quad
		x
		\mapsto
		((\sigma x)_\ell)_{\ell\in\mathcal{S},\sigma\in\widetilde{\mathcal{T}}_\ell}
		\]

		Fix a positive integer $N$ divisible by all $\ell\in\mathcal{S}$, we write $\mathcal{H}^{(N)}(x_0)$ for the prime-to-$N$ Hecke orbit of $x_0$. What is the image of the following simultaneous supersingular reduction map?
		\[
		\bfR_{\mathcal{S},\widetilde{\mathcal{T}}}^{(N)}
		\colon
		\mathcal{H}^{(N)}(x_0)
		\mapsto
		\prod_{\ell\in\mathcal{S},\sigma\in\widetilde{\mathcal{T}}_{\ell}}
		\mathcal{H}^{(N)}(x_{0}^{(\ell)}),
		\quad
		x
		\mapsto
		((\sigma x)_{\ell})_{\ell\in\mathcal{S},\sigma\in\widetilde{\mathcal{T}}}
		\]
	\end{question}
	These two questions have deep applications in Iwasawa theory, we refer the reader to §\ref{Mazur's conjecture on higher Heegner points} for more details.

	Now we introduce the main results of this article: we fix a non-degenerate symplectic pairing $\langle-,-\rangle\colon L\times L\to\Z$, write $\bfGSp_L$ for the similitude symplectic group scheme over $\Z$ associated to the symplectic pairing $\langle-,-\rangle$ and fix a compact open subgroup $U$ of $\bfGSp_L(\Z_p)$. We fix a polarized abelian variety with a $U$-level structure
	\[
	\uA_0
	=
	(A_0,\lambda_0,\overline{\eta}_0),
	\]
	where $A_0$ is an abelian variety over $K=\overline{\mathbb{Q}}$, $\lambda_0$ is a polarization of $A$ and $\overline{\eta}_0$ is a $U$-orbit of the identity map $\mathrm{Id}\colon A_0[p^\infty]\rightarrow A_0[p^\infty]$ where $A_0[p^\infty]$ is the $p$-divisible group associated to $A_0$ and we assume that the symplectic pairing $\langle-,-\rangle$ is compatible with the Weil pairing $\widehat{T}(A_0)\times\widehat{T}(A_0)\to\widehat{T}(\mathbb{G}_m)$ induced by $\lambda$ (here $\widehat{T}(A_0)$ is the $\widehat{\Z}$-Tate module associated to $A_0$). In other words, $\uA_0$ is an abelian variety with PEL structures and appears in the Shimura variety of Siegel type.
	We refer to §\ref{Polarized abelian varieties} for more details.

	We write $\bfG_{\uA_0}$ for certain group scheme associated to $\uA_0$ related to the units in the endomorphism ring $\mathrm{End}(\uA_0)$.
	We fix as above a finite set $\mathcal{S}$ of primes and for each $\ell\in\mathcal{S}$, fix a place $v(\ell)$ of $K$ over $\ell$ as well as a finite subset $\mathcal{T}_\ell$ of $Z_{\bfG_{\uA_0}}(\mathbb{Q}_p)$ where $Z_{\bfG_{\uA_0}}$ is the center of $\bfG_{\uA_0}$. We write $\uA^{(\ell)}$ for the reduction modulo $v(\ell)$ of $\uA$.

	We consider the $p$-adic Hecke orbit $\mathcal{H}_p(\uA_0)$ of $\uA_0$ (see Definition \ref{p-adic Hecke orbit}). Suppose for each $\ell\in\mathcal{S}$, $\uA_0$ has supersingular good reduction at $v(\ell)$, then so does every $\uA\in\mathcal{H}_p(\uA_0)$. The main result of this article is
	\begin{theorem}[Theorem \ref{R_{S,T} is an isomorphism}]\label{main result}
		Denote the image of each $\mathcal{T}_{\ell}$ in
		$Z_{\bfG_{\uA_0}}(\mathbb{Q})\backslash Z_{\bfG_{\uA_0}}(\mathbb{Q}_p)$ by $\overline{\mathcal{T}}(\ell)$. Then the image of $\bfR_{\mathcal{S},\mathcal{T};p}$ is in bijection with $\prod_{\ell\in\mathcal{S},\overline{\sigma}\in\overline{\mathcal{T}}(\ell)}
		\mathcal{H}_p(\uA_{0}^{(\ell)})$.

		In particular, suppose that for each $\ell\in\mathcal{S}$, the images of $\sigma\in\mathcal{T}_{\ell}$ in $Z_{\bfG_{\uA_0}^1}(\mathbb{Q})\backslash Z_{\bfG_{\uA_0}^1}(\mathbb{Q}_p)$ are all distinct, then $\bfR_{\mathcal{S},\mathcal{T};p}$ is surjective.
	\end{theorem}
	\begin{remark}
		The condition in the second part of the above theorem is the so-called `rationality' condition, which corresponds to the ``chaotic" part in the argument in \cite{Cornut2002,Vatsal2002,CornutVatsal2007}.

		The most significant difference between our result and the preceding results (for example \cite{ShouwuZhang05,CornutVatsal2007,Disegni2022}, etc.) is that $A_0$ can be a non-CM abelian variety of arbitrary dimension and the level subgroup $U\subset\mathbf{Sp}_L(\Z_p)$ can be arbitrarily small, and we give a precise description of the image of $\Rbf_{\Scal,\Tcal}$ without any constraints on $\Tcal$. Moreover, our proof is more streamlined and applies to Shimura datum of Hodge type. See the end of this introduction for the method that we use in this article to prove the above theorem.
	\end{remark}

	Our argument for Theorem \ref{main theorem-3} allows a generalization to Shimura varieties of Hodge type. Write $(G,X)$ for a Shimura datum of Hodge type and write $Sh_{U,\C}$ for the Shimura varieties over $\C$ of level $U\subset G(\A_f)$ attached to $(G,X)$. Similar to the map $\Rbf_{\Scal,\Tcal}$, we have a simultaneous supersingular reduction map
	\[
	\Rbf_{\Scal,\Tcal}^G\colon
	\Hcal_p^G(\Acal_{x_0})\to
	\prod_{\lfrak\in\Scal}\prod_{\sigma\in\Tcal_\lfrak}
	\Hcal_p^G(\Acal_{x_0^{(\lfrak)}}).
	\]
	Here $\Scal$ is a non-empty finite set of primes in $F$ and $\Tcal$ is as before.
	Then we have (we refer to §\ref{Shimura varieties of Hodge type} for unexplained notations below)
	\begin{theorem}[Theorem \ref{R_{S,T}, not simple}]\label{main theorem 4}
		We assume the following:
		\begin{enumerate}
			
			\item 
			(\ref{condition on similitude of I_x}) holds;

			\item 
			$G^1$ is simply connected;

			\item 
			there is a decomposition 
			$\Pbf G^1(\Q_p)=\prod_{j=1}^tG^{(j)}$ into subgroups with each $G^{(j)}$ simple and non-commutative;

			\item 
			for distinct $\lfrak,\lfrak'\in\Scal$, the subgroups $I_{x_0^{(\lfrak)}}(\Q)\bigcap U^p$ and $I_{x_0^{(\lfrak')}}(\Q)\bigcap U^p$ are \emph{not} commensurable;

			\item 
			for each $j=1,\cdots,t$ and $\lfrak\in\Scal$, $\Gamma_p(\lfrak)G^{(j)}$ is dense in $\Pbf G^1(\Q_p)$.
		\end{enumerate}
		Then the image $\Rbf_{\Scal,\Tcal}(\Hcal_p^G(\Acal_{x_0}))$ is equal to
		\[
		\prod_{\lfrak\in\Scal}
		\prod_{i=1}^{r(\lfrak)}
		\widetilde{\Delta}^{\Tcal_{\lfrak,i}}(\Hcal_p^G(\Acal_{x_0^{(\lfrak)}})).
		\]
	\end{theorem}
	From this, one deduces easily Mazur's conjecture (see Theorem \ref{main theorem 5}). Note here that we just assume $G^1$ simply connected and that $\Pbf G^1(\Q_p)$ is a product of simple and non-commutative groups, whereas in \cite{Cornut2002,Vatsal2002,CornutVatsal2007}, the authors need to assume that in addition that the group $\Pbf G^1(\Q_p)$ is \emph{itself} simple. This improves significantly the result in \cite{Cornut2002,Vatsal2002,Vatsal2002}. We refer to Remark \ref{compare CV and ours} for more on this.

	In the case where $A_0$ has CM by a CM field $L$ such that $\mathrm{End}_{K}(\uA_0)\otimes\mathbb{Q}\simeq L$, the above theorem is particularly interesting: write $L'$ for the reflex field of $L$ determined by the preceding isomorphism. Then we have group homomorphisms
	\[
	\Gal((L')^\mathrm{ab}/L')
	\xrightarrow{\Art_{L'}}
	\overline{(L')^\times}
	\backslash
	\A_{L',f}^\times
	\xrightarrow{\Nm}
	\bfG_{\uA_0}(\mathbb{Q})\backslash\bfG_{\uA_0}(\A_f)
	\]
	where $\Art_{L'}$ is the Artin reciprocity map and $\Nm\colon L'\rightarrow L$ is the reflex type norm.  For each prime $\ell\in\mathcal{S}$,
	fix a finite set $\widetilde{\mathcal{T}}_\ell$ of the preimage of $\bfG_{\uA_0}(\mathbb{Q})\backslash\bfG_{\uA_0}(\A_f)$ under the above map $\Nm\circ\Art_{L'}$.
	Again we have the simultaneous reduction map
	\[
	\bfR_{\mathcal{S},\widetilde{\mathcal{T}};p}
	\colon
	\mathcal{H}_p(\uA_0)
	\to
	\prod_{\ell\in\mathcal{S},\overline{\sigma}\in\overline{\mathcal{T}}(\ell)}
	\mathcal{H}_p(\uA_0),
	\quad
	\uA
	\mapsto
	((\sigma\uA)_{\ell})_{\ell\in\mathcal{S},\sigma\in\widetilde{\mathcal{T}}}.
	\]
	\begin{theorem}[Theorem \ref{surjectivity of Galois orbit}]\label{main theorem-3}
		Suppose that for each $\ell\in\mathcal{S}$, the images of the elements $\sigma\in\widetilde{\mathcal{T}}_\ell$ under the map
		\[
		\Gal((L')^\mathrm{ab}/L')
		\xrightarrow{\Nm\circ\Art_{L'}}
		\bfG_{\uA_0}(\mathbb{Q})\backslash\bfG_{\uA_0}(\A_f)
		\to
		\bfG_{\uA_0}(\mathbb{Q})\backslash\bfG_{\uA_0}(\mathbb{Q}_p)
		\]
		are all distinct, then $\bfR_{\mathcal{S},\widetilde{\mathcal{T}};p}$ is surjective.
	\end{theorem}

	\begin{remark}
		In the case where $A_0$ has CM by a CM field $L$, we can compare this theorem with \cite[Theorem 1.1]{AkaLuethiMichelWieser2022}. In \textit{loc.cit}, the authors considered a family of points of CM by (the maximal orders in) quadratic imaginary fields $\mathbb{Q}(-\sqrt{D_i})$ with $D_i\rightarrow\infty$ while here we consider a family of $p$-isogenies of a fixed CM point $A_0$ (thus a family of points of CM by orders of $p$-power discriminant in the fixed CM field $L$). See also \cite[Remark 1.4]{AkaLuethiMichelWieser2022}.

		Galois orbits of CM points in the Shimura variety $Sh$ contain very useful information and are related to many problems, for example, the subconvexity of special Rankin-Selberg $L$-values of modular forms (see for example \cite{Michel2004,ShouwuZhang05} and the references therein). Our Theorem \ref{surjectivity of Galois orbit} is much more general than Theorem \ref{main theorem-3}: we allow $A_0$ to be non-simple in Theorem \ref{surjectivity of Galois orbit} whereas in the above theorem we just state for $A_0$ simple. We expect our general result will have applications in the study of Hecke orbit conjectures, mod $\ell$ version André-Oort conjectures, etc.
	\end{remark}

	Putting together Theorem \ref{main result} for each prime $p\notin\mathcal{S}$, we get
	\begin{theorem}\label{main theorem for prime-to-N Hecke orbit}
		Fix a positive integer $N$ divisible by all $\ell\in\mathcal{S}$. Suppose for each $\ell\in\mathcal{S}$, any two distinct $\sigma_1,\sigma_2\in\mathcal{T}_\ell$, one has
		\[
		\sigma_1^{-1}\sigma_2\notin Z_{\bfG_{\uA_0}}(\mathbb{Q})\backslash Z_{\bfG_{\uA_0}}(\A_f)/\prod_{q\mid N}Z_{\bfG_{\uA_0}}(\mathbb{Q}_q),
		\] 
		then $\bfR_{\mathcal{S},\mathcal{T}}^{(N)}$ is surjective.
	\end{theorem}
	This is a special case of Theorem \ref{R_{S,T}^N is an isomorphism}, which answers Question \ref{question-2}.

	We outline the proof of Theorem \ref{main result} for the case that $\mathcal{T}_\ell$ is in bijection with $\overline{\mathcal{T}}_\ell$ for all $\ell\in\mathcal{S}$. This relies on an explicit description of the $p$-adic Hecke orbit of $\mathcal{H}_p(\uA_0)$ and $\mathcal{H}_p(\uA_{0}^{(\ell)})$. In fact, we have bijections (see §\ref{p-adic Hecke orbit of A}, in particular Theorem \ref{description of H_p(A_0)}, (\ref{p-adic Hecke orbit of A, char=0}) and (\ref{p-adic Hecke orbit of A, char=ell})):
	\begin{align*}
		\mathcal{H}_p(\uA_0)
		&
		\simeq
		\bfG_{\uA_0}(\Z[1/p])
		\backslash
		\bfGSp_L(\mathbb{Q}_p)/U,
		\\
		\mathcal{H}_p(\uA_{0}^{(\ell)})
		&
		\simeq
		\bfG_{\uA_{0}^{(\ell)}}(\Z[1/p])
		\backslash
		\bfGSp_L(\mathbb{Q}_p)/U.
	\end{align*}
	Such descriptions, even though very simple, are all obtained in previous works ( for example \cite{CornutVatsal2007,AkaLuethiMichelWieser2022}) via the theory of Deuring on complex multiplication. Our approach, inspired by \cite{Yu2010}, is much easier and much more transparent and we obtain more general results than those in the literature: here $U$ can be arbitrarily small and $A_0$ needs not to have CM while previous works always assume $U$ to contain $\Gamma_0(N)$ and $A_0$ to be CM.

	So Theorem \ref{main result} is reduced to the following
	\begin{theorem}[Theorem \ref{surjectivity of Pi_R}]\label{main theorem-2}
		Assume that $\mathcal{T}_\ell$ is in bijection with $\overline{\mathcal{T}}_\ell$ for all $\ell\in\mathcal{S}$. Then the following map is surjective
		\[
		\bfSp_L(\mathbb{Q}_p)
		\to
		\prod_{\ell\in\mathcal{S},\sigma\in\mathcal{T}_{\ell}}
		\bfG_{\uA_{0}^{(\ell)}}^1(\Z[1/p])
		\backslash
		\bfSp_L(\mathbb{Q}_p)/U^1,
		\quad
		g
		\mapsto
		\left(
		\bfG_{\uA_{0}^{(\ell)}}^1(\Z[1/p])\sigma gU
		\right)_{\ell\in\mathcal{S},\sigma\in\mathcal{T}_\ell},
		\]
		where $U^1=U\bigcap\bfSp_L(\Q_p)$.
	\end{theorem}
	The proof of the theorem relies on a careful study of commensurators of $\bfG_{\uA_{0}^{(\ell)}}(\Z[1/p])$ inside $\bfGSp_L(\mathbb{Q}_p)$ and Ratner's theorems on unipotent flows. We follow the strategy in \cite{Cornut2002}. The main difficulty is to show that for $\sigma_1,\sigma_2\in\mathcal{T}_\ell$, $\sigma_1\sigma_2^{-1}\notin Z_{\mathbf{G}_{\underline{A}_0}}(\mathbb{Q})$ if and only if $\sigma_1^{-1}\mathbf{G}_{\underline{A}_0}(\Z[1/p])\sigma_1$ and $\sigma_2^{-1}\mathbf{G}_{\underline{A}_0}(\Z[1/p])\sigma_2$ are not commensurable (Theorem \ref{commensurable}).

	\begin{remark}\label{compare CV and ours}
		The proof of the above theorem generalizes to arbitrary connected reductive group $G/\Q$ whose derived subgroup $G^1$ is simply connected. The relation between our proof and \cite{Cornut2002,Vatsal2002,CornutVatsal2007} is as follows: in \cite{Cornut2002,Vatsal2002}, the key point is that the group $\mathrm{PSL}_2(\Q_p)$ is a simple and non-commutative group. By a very interesting characterization of subgroups of a finite product of $\mathrm{PSL}_2(\Q_p)$ which is normalized by the diagonal image of $\mathrm{PSL}_2(\Q_p)$ (see also Proposition \ref{product of diagonals}), the authors in \cite{Cornut2002,Vatsal2002} deduce the surjectivity of the map $\Rbf_{\Scal,\Tcal}$ under the rationality condition. In our proof, we give a characterization of similar subgroups of finite product of non-commutative group $\Pbf G^1(\Q_p)=G^1(\Q_p)/Z_{G^1(\Q_p)}$ (see also Proposition \ref{H is a product of diagonals}). Combined with our assumption that $G^1$ is simply connected, we can apply the strong approximation property to $\Pbf G^1(\Q_p)$ and prove the above theorem. We refer to §\ref{G not simple} and §\ref{Shimura varieties of Hodge type} for more details.
	\end{remark}

	\subsection{Mazur's conjecture on higher Heegner points}\label{Mazur's conjecture on higher Heegner points}
	One motivation of this article comes from generalizing \cite{Cornut2002} to Shimura varieties of Hodge type. Another motivation comes from	reproving Mazur's conjecture in a more natural way than in \cite{Cornut2002,Vatsal2002,CornutVatsal2007}. In \emph{loc.cit}, the authors considered two parts of the action of certain Galois group and the \emph{geometric} part involves a delicate analysis of certain auxiliary Shimura curves.

	Let's first recall Mazur's conjecture. We fix a CM number field $E$ with maximal totally real subfield $F$. We fix one archimedean place $\xi\colon F\hookrightarrow\R$ of $F$. We also fix an ideal $\Nfrak$ of $\Ocal_F$ coprime to the relative discriminant $E/F$. We assume that the weak Heegner hypothesis is satisfied: $\epsilon_{E/F}(\Nfrak)=(-1)^{[E:F]-1}$. Then there is a unique quaternion algebra $B$ over $F$ that is ramified exactly at the primes $\Pfrak$ of $F$ inert in $E$ such that $\mathrm{ord}_{\Pfrak}(\Nfrak)$ is odd and at all archimedean places of $F$ except $\xi$. For each compact open subgroup $U$, one associates a Shimura curve $Sh_{U,\C}$ over $\C$ of level $U$.

	One can construct a family of special points, called Heegner points $x[\lfrak^k]$ on $Sh_{U,\C}$ for each prime $\lfrak$ of $\Ocal_F$ coprime to $\Nfrak D_{E/F}$ and $k\ge0$. We fix one such prime $\Pfrak$ (over the rational prime $p$), then all these $x[\pfrak^k]$ ($k\ge0$) are defined over $E[\pfrak^\infty]=\lim\limits_{\overrightarrow{k}}E[\pfrak^k]$ with $E[\pfrak^k]$ the ring class field of $E$ corresponding to $h[\pfrak^k]$. Write $H_\infty$ for the maximal anti-cyclotomic $\Z_p^{[F_\pfrak:\Q_p]}$-extension, then $E[\pfrak^\infty]/H_\infty$ is a finite extension. For any abelian variety $A$ over $F$ with a surjective morphism $Sh_{U,F}\to A$, we write $y[\pfrak^k]$ for the image of $x[\pfrak^k]$. Then we have the following theorem (Theorem \ref{Mazur's conjecture}), which is a conjecture of Mazur (\cite[p.203]{Mazur1983})
	\begin{theorem}\label{main theorem 5}
		There exists $k\ge0$ such that
		\[
		\Trrm_{E[\pfrak^\infty]/H_\infty}(y[\pfrak^k])\notin A(H_\infty)_\mathrm{tors}.
		\]
	\end{theorem}
	Mazur's conjecture is inspired by the famous Gross-Zagier formula, the latter implies in particular that $\Trrm_{E[1]/E}(y[1])\notin A(E)_\mathrm{tors}$ if and only if $L'(A/E,1)\neq0$. This conjecture was later applied to Iwasawa theory. We refer to \cite{Cornut2002,Howard2004,Howard2004b} for some of the applications of this conjecture in this direction. The above theorem is an easy consequence of Theorem \ref{main theorem 4}.

	According to \cite{Cornut2002,Vatsal2002,CornutVatsal2007}, the action of the Galois group $G_0:=\Gal(E[\pfrak^\infty]/H_\infty)$ can be divided into two parts: the \emph{geometric} part corresponds to the subgroup $G_1\subset G_0$ generated by the Frobenius in $\Gal(E[\pfrak^\infty]/E)$ of ramified primes in $E/F$ prime to $\pfrak$; the \emph{chaotic} part corresponds to the quotient group $G_0/G_1$. Fix a set $\Rcal$ of representatives in $G_0$ of the quotient $G_0/G_1$. Then we can rewrite
	\[
	\Trrm_{E[\pfrak^\infty]/H_\infty}(y[\pfrak^k])=\sum_{\sigma\in\Rcal}\sum_{g\in G_1}\sigma(g(y[\pfrak^k])).
	\]
	The map $y[\pfrak^k]\mapsto\sum_{g\in G_1}g(y[\pfrak^k])$ comes from a morphism of modular curves
	$Sh_{\Nfrak\Mfrak,\C}\to(Sh_{\Nfrak,\C})^{2^{[F:\Q]}}$ where $\Mfrak=\lfrak_1\cdots \lfrak_{[F:\Q]}$ is the product of ramified primes $\lfrak_i$ in $E/F$ prime to $\pfrak$ (see \cite[§4.1]{Cornut2002} for more details).

	Our proof of Mazur's conjecture, on the other hand, treats the geometric part $G_1$ and the chaotic part $\Rcal$ in a uniform manner and is encoded in the description of the image of the map $\Rbf_{\Scal,\Tcal}$.
	As we have given enough details in the previous subsection of this introduction, we refer to the proof of Theorem \ref{Mazur's conjecture} for more details.

	\subsection*{Outline}
	Here is an outline of this article: in §\ref{Polarized abelian varieties}, we introduce the main objects of study: polarized abelian varieties. In §\ref{p-adic Hecke orbit of A} and §\ref{prime-to-N Hecke orbits of A_0}, we give explicit descriptions of the $p$-adic/prime-to-$N$ Hecke orbits of $\uA_0$ and reduce Theorem \ref{main result} to Theorem \ref{main theorem-2}. In §\ref{Commensurability criterion}, we study the commensurators of certain arithmetic subgroups of $\bfGSp_L(\mathbb{Q}_p)$. In §\ref{Proof of the main result}, we prove Theorem \ref{main theorem-2}. In §\ref{Shimura varieties of Hodge type}, we generalize our results to Shimura varieties of Hodge type.	
	In §\ref{CM abelian varieties}, we apply Theorem \ref{main result} to the case where $A_0$ is a CM abelian variety. In §\ref{Heegner points and Mazur's conjecture}, we apply our results in §\ref{Shimura varieties of Hodge type} to Shimura curves and reprove Mazur's conjecture.

	\subsection*{Notations}
	For a positive integer $r$, we write
	\[
	\Z_{(r)}=\Z[\frac{1}{r}],
	\quad
	\Z^{(r)}=\Z[\frac{1}{\ell}|\,\forall\,\ell\nmid r].
	\]
	\[
	\A_r=\prod_{\ell\mid r}\mathbb{Q}_\ell,
	\quad
	\A^{(r)}_f=\prod_{\ell\nmid r}\,'\mathbb{Q}_\ell.
	\]
	Here $\prod\,'$ is the restricted product. Similarly $\widehat{\Z}^{(r)}=\prod_{\ell\nmid r}\Z_\ell$.
	For a finite set of primes $\mathcal{N}=\{p_1,\cdots,p_s\}$, we write
	\[
	\Z_{(\mathcal{N})}
	:=
	\Z_{(p_1p_2\cdots p_s)},
	\quad
	\Z^{(\mathcal{N})}
	:=
	\Z^{(p_1\cdots p_s)},
	\]
	similarly for $\A_{(\Ncal)}$, etc. For a number field $F$, we write $\A_F$ for the ring of adèles over $F$, $\A_{F,f}$ the ring of finite adèles over $F$. We write $\Acal=\A_\Q$ and $\A_f=\A_{\Q,f}$.

	For a group scheme $G$, we write $Z_G$ for the center of $G$.

	\section{Polarized abelian varieties}\label{Polarized abelian varieties}
	We recall the notion of polarized abelian varieties with level structures. A very convenient reference for this section is \cite[§2]{Yu2010}.

	We fix a non-zero  free $\Z$-module of finite type $L$ and write $V=L\otimes_{\Z}\mathbb{Q}$. We assume that there is a non-degenerate symplectic bilinear pairing
	\[
	\langle-,-\rangle\colon L\times L\rightarrow\Z.
	\]
	We do not assume that $L$ is unimodular with respect to $\langle-,-\rangle$.

	We define the similitude symplectic group scheme $\bfGSp_L$ over $\Z$ associated to the symplectic pairing $(L,\langle-,-\rangle)$: for any $\Z$-algebra $R$,
	\[
	\bfGSp_L(R)
	:=
	\{
	(g,r)\in\mathrm{GL}_{R}(L\otimes R)\times R^\times
	\mid
	\langle gx,gy\rangle=\mu(g)\langle x,y\rangle,
	\forall
	x,y\in L\otimes R
	\}.
	\]
	Note that $\bfGSp_L$ is not necessarily a smooth group scheme over $\mathrm{Spec}(\Z)$.
	Since $L\neq0$, the pair $(g,\mu(g))$ is uniquely determined by the first component $g$, so we will also write $g$ for $(g,\mu(g))$. Moreover $\mu$ is the similitude factor of $g$ and defines a morphism of group schemes over $\mathrm{Spec}(\Z)$:
	\[
	\mu
	\colon
	\bfGSp_L
	\rightarrow
	\mathbb{G}_m.
	\]
	The isometry subgroup $\bfSp_L$ of $\bfGSp_L$ is given by
	\[
	\bfSp_L(R)
	:=
	\{
	g\in\bfGSp_L(R)
	\mid
	\mu(g)=1
	\}.
	\]

	For an abelian scheme $A$ over a scheme $S$, we write $A^\vee$ for the dual abelian scheme of $A$. For two abelian schemes $A,B$ over $S$, we write
	\[
	\mathrm{Hom}_{S}(A,B)^\circ=\mathrm{Hom}_{S}(A,B)\otimes_{\Z}\mathbb{Q}.
	\]
	Similarly $\mathrm{End}_S(A)^\circ=\mathrm{End}_S(A)\otimes_{\Z}\mathbb{Q}$.

	A \textit{polarized abelian scheme over $S$} is a pair $\underline{A}=(A,\lambda)$ where $A$ is an abelian scheme over $S$ and $\lambda\colon A\rightarrow A^\vee$ is a polarization. We assume in the following
	\[
	\mathrm{dim}_{\mathbb{Q}}(V)
	=
	2\mathrm{dim}_S(A).
	\]
	In other words, for $S=\mathrm{Spec}(\Z)$, these $\uA$ are objects in the Shimura varieties of PEL type attached to the PEL datum $(\mathbb{Q},\dagger=\mathrm{Id}_{\mathbb{Q}},V,\langle-,-\rangle)$.

	Let $\uA=(A,\lambda)$ be as above. For any prime number $\ell$, we write $A[\ell^\infty]$ for the $\ell$-divisible group associated to the abelian scheme $A$ and similarly
	\[
	\underline{A}[\ell^\infty]=(A[\ell^\infty],\lambda_\ell),
	\]
	the $\ell$-divisible group associated to $\underline{A}$. Here $\lambda_\ell\in\mathrm{Hom}_{S}(A[\ell^\infty],A^\vee[\ell^\infty])$ is the polarization induced from $\lambda$ (when there is no confusion, we also write $\lambda$ for $\lambda_\ell$). For any two polarized abelian varieties $\uA_1$ and $\uA_2$ over a field $K$, we write
	\begin{enumerate}
		\item
		$\QISG_K(\uA_1,\uA_2)$ (resp. $\Isom_K(\uA_1,\uA_2)$) for the set of quasi-isogenies (resp. isomorphisms) $\phi\colon A_1\rightarrow A_2$ over $K$ such that $\phi^\ast\lambda_2=\mu\lambda_1$ for some $\mu\in\Q^\times$.

		\item 
		For a prime $\ell$, $\QISG_{K,\ell}(\uA_1,\uA_2)$ is the subset of $\QISG_{K}(\uA_1,\uA_2)$ consisting of $\phi$ whose degree is of the form $\ell^r$ for some $r\in\Z$.

		\item 
		$\QISG_K(\uA_1[\ell^\infty],\uA_2[\ell^\infty])$ (resp. $\Isom_K(\uA_1[\ell^\infty],\uA_2[\ell^\infty])$) for the set of quasi-isogenies (resp. isomorphisms) $\phi\colon A_1[\ell^\infty]\rightarrow A_2[\ell^\infty]$ such that $\phi^\ast\lambda_2=\mu\lambda_1$ for some $\mu\in\Q_\ell^\times$, resp. $\mu\in\Z_\ell^\times$.
	\end{enumerate}
	Here $\phi^\ast\lambda_2:=\phi^\vee\lambda_2\phi$.

	We fix in the following a polarized abelian variety $\uA_0=(A_0,\lambda_0)$ over $K$. 
	We denote $\bfG_{\uA_0}$ for the group scheme over $\Z$ of automorphisms of $\uA_0$ defined as follows: for any $\Z$-algebra $R$,
	\[
	\bfG_{\uA_0}(R)
	:=
	\{
	g\in\mathrm{End}_K(\uA_0)\otimes_{\Z}R
	|
	g'g=\mu(g)\in R^\times
	\}.
	\]
	Here $g\mapsto g':=\lambda_0^{-1}\circ g^\vee\circ\lambda_0$ is the Rosati involution induced by $\lambda_0$. 
	We write $\bfG_{\uA_0}^1$ for the subgroup scheme of $\bfG_{\uA_0}$ defined by
	\[
	\bfG_{\uA_0}^1(R)=\{g\in \bfG_{\uA_0}(R)\mid\mu(g)=1\}.
	\]
	It is easy to see
	\[
	\bfG_{\uA_0}(\mathbb{Q})
	\simeq
	\QISG_K(\uA_0,\uA_0),
	\quad
	\bfG_{\uA_0}(\Z_{(\ell)})
	\simeq
	\QISG_{K,\ell}(\uA_0\ldiv,\uA_0\ldiv).
	\]

	From now on we assume $S=\mathrm{Spec}(K)$ with $K$ an algebraically closed field.

	Let $U$ be a compact open subgroup of $\QISG_K(\uA_0\pdiv,\uA_0\pdiv)$.
	A \textit{$U$-level structure} on a polarized abelian scheme $\uA=(A,\lambda)$ over $K$ is a \textit{non-empty} $U$-orbit of isomorphisms in
	$\QISG_K(\uA_0\pdiv,\uA\pdiv)$.	In the following we also write $\uA$ for the triple $\uA=(A,\lambda,\overline{\eta})$ where $(A,\lambda)$ is a polarized abelian variety and $\overline{\eta}$ is a $U$-level structure on $(A,\lambda)$. Moreover, we write
	\[
	\uA_0=(A_0,\lambda_0,\overline{\eta}_0),
	\]
	where $\overline{\eta}_0$ is the $U$-orbit of the identity in $\Isom_K(\uA_0\pdiv,\uA_0\pdiv)$. As in the case of $(A,\lambda)$, for two polarized abelian varieties with $U$-level structures $\uA_1=(A_1,\lambda_1,\overline{\eta}_1),\uA_2=(A_2,\lambda_2,\overline{\eta}_2)$, we write
	\begin{enumerate}
		\item
		$\QISG_K(\uA_1,\uA_2)$ is the same as $\QISG_{K}((A_1,\lambda_1),(A_2,\lambda_2))$.

		\item 
		$\QISG_{K,\ell}(\uA_1,\uA_2)$ is the same as $\QISG_{K,\ell}((A_1,\lambda_1),(A_2,\lambda_2))$.

		\item 
		For a prime $\ell\neq p$, $\Isom_K(\uA_1\ldiv,\uA_2\ldiv)$ is the same as $\Isom_K((A_1,\lambda_1)\ldiv,(A_2,\lambda_2)\ldiv)$).

		\item 
		$\Isom_K(\uA_1\pdiv,\uA_2\pdiv)$ (resp. $\Isom_K(\uA_1,\uA_2)$) is the set of $\phi$ in $\Isom_K((A_1,\lambda_1)\pdiv,(A_2,\lambda_2)\pdiv)$ (resp. $\Isom_K((A_1,\lambda_1),(A_2,\lambda_2))$) such that $\phi_\ast\overline{\eta}_1=\mu\overline{\eta}_2$ for some $\mu\in\Q_p^\times$ (resp. $\mu\in\Q^\times$).
	\end{enumerate}
	We write again $\bfG_{\uA_0}=\bfG_{(A_0,\lambda_0)}$. Then we have ($\ell\neq p$)
	\[
	\bfG_{\uA_0}(\mathbb{Q})
	\simeq
	\QISG_{K}(\uA_0,\uA_0),
	\quad
	\bfG_{\uA_0}(\Z[1/\ell])
	\simeq
	\QISG_{K,\ell}(\uA_0\ldiv,\uA_0\ldiv).
	\]

	\section{$p$-adic Hecke orbits of $\uA_0$}\label{p-adic Hecke orbit of A}
	We fix a prime number $p$ as in the preceding section.
	Let $\uA_0=(A_0,\lambda_0,\overline{\eta}_0)$ be a polarized abelian variety over an algebraically closed field $K=\overline{K}$ with $U$-level structure ($U$ is a compact open subgroup of $\QISG_{K}(\underline{A}_0\pdiv,\underline{A}_0\pdiv)$).
	\begin{definition}\label{p-adic Hecke orbit}
		The \emph{$p$-adic Hecke orbit of $\uA_0$}, denoted by
		\[
		\mathcal{H}_p(\uA_0)
		\]
		is the set of isomorphism classes of polarized abelian varieties $\uA$ with a $U$-level structure over $K$ such that
		\[
		\QISG_{K,p}(\uA_0,\uA)\neq\emptyset.
		\]
	\end{definition}

	Then it is not hard to see
	\begin{theorem}\label{description of H_p(A_0)}
		We have a natural bijection
		\[
		\psi
		\colon
		\mathcal{H}_p(\uA_0)
		\simeq
		\QISG_{K,p}(\uA_0,\uA_0)
		\backslash
		\QISG_K(\uA_0[p^\infty],\uA_0[p^\infty])
		/U.
		\]
	\end{theorem}
	\begin{proof}
		The proof follows closely that of \cite[Theorem 2.2]{Yu2010}.
		We define the map $\psi$
		as follows: for any $\uA\in\mathcal{H}_p(\uA_0)$, we fix $\varphi\in\QISG_{K,p}(\uA,\uA_0)$ and $\alpha_p\in\QISG_K(\uA_0[p^\infty],\uA[p^\infty])$, then the composition
		\[
		\varphi\circ\alpha_p\in\QISG_K(\uA_0[p^\infty],\uA_0[p^\infty]).
		\]
		We define $\psi(\uA)$ to be the image of $\varphi\circ\alpha_p$ in the quotient
		\[
		\psi(\uA)
		\in
		\QISG_{K,p}(\uA_0,\uA_0)
		\backslash
		\QISG_K(\uA_0[p^\infty],\uA_0[p^\infty])
		/U.
		\]
		Moreover, changing $\alpha_p$ amounts to multiplying $\varphi\circ\alpha_p$ on the right by an element in $U$; changing $\varphi$ amounts to multiplying $\varphi\circ\alpha_p$ on the left by an element in $\QISG_{K,p}(\uA_0,\uA_0)$, thus the map $\psi$ is well-defined and is clearly injective.

		Next we show that $\psi$ is surjective: for any $\phi_p\in\QISG_{K,p}(\uA_0\pdiv,\uA_0\pdiv)$, let $N=p^r$ with $r\in\mathbb{N}$ such that
		\[
		f_p:=N\phi_p^{-1}\colon\uA_0[p^\infty]\rightarrow\uA_0[p^\infty]
		\]
		is an isogeny. Then we write $H=\mathrm{Ker}(f_p)$, $A=A_0/H$ and $\pi\colon A_0\rightarrow A$ for the natural projection map. We define
		\[
		\lambda:=\mu(\phi_p)^{-1}(N\pi^{-1})^\ast\lambda_0,
		\]
		a quasi-polarization of $A$. Since $f_p$ and $\pi_p\colon A_0[p^\infty]\rightarrow A[p^\infty]$ have the same kernel, there is an isomorphism $\alpha_p\colon A_0[p^\infty]\rightarrow A[p^\infty]$ such that $\alpha_p\circ f_p=\pi_p$. In particular, 
		\begin{align*}
			\alpha_p^\ast\lambda
			&
			=
			\alpha_p^\ast(N\pi_p^{-1})^\ast\lambda_0
			=
			(N\pi_p^{-1}\alpha_p)^\ast\lambda_0
			\\
			&
			=
			(Nf_p^{-1})^\ast\lambda_0
			=
			\phi_p^{-1}\lambda_0=\lambda_0
		\end{align*}
		and thus $\lambda=(\alpha_p^{-1})^\ast\lambda$. This shows that $\lambda$ is a \emph{polarization} of $A$. Now we set
		\[
		\varphi
		:=
		N\pi^{-1}
		\colon
		A\rightarrow A_0.
		\]
		Thus $\varphi\circ\alpha_p=\phi_p$, a quasi-isogeny whose degree lies in $p^{\Z}$. This gives $\uA\in\mathcal{H}_p(\uA_0)$, proving the surjectivity of $\psi$.
	\end{proof}

	\subsection{The case $K=\overline{\mathbb{Q}}$}\label{case K=Qbar}
	Recall we have a symplectic pairing $\langle-,-\rangle$ on $L$.
	We assume that there is an isomorphism of $\widehat{\Z}$-modules
	\[
	L\otimes_{\Z}\widehat{\Z}
	\simeq
	\widehat{T}(A_0):=\prod_{\ell}T_\ell(A_0)
	\]
	such that we have a commutative diagram of $\widehat{\Z}$-modules
	\[
	\begin{tikzcd}
		(L\otimes_{\Z}\widehat{\Z})
		\times
		(L\otimes_{\Z}\widehat{\Z})
		\arrow[r,"{\langle-,-\rangle}"]
		\arrow[d,"\simeq"]
		&
		\widehat{\Z}[1]
		\arrow[d,"\simeq"]
		\\
		\widehat{T}(A_0)
		\times
		\widehat{T}(A_0)
		\arrow[r,"{\langle-,-\rangle_{\lambda_0}}"]
		&
		\widehat{T}(\mathbb{G}_m)
	\end{tikzcd}
	\]
	Here $T_\ell(A_0)$ is the $\ell$-adic Tate module of $A_0$ and $\langle-,-\rangle_{\lambda_0}$ is the Weil pairing on the Tate modules of $A_0$ induced by the polarization $\lambda_0$ and $\widehat{\Z}[1]\simeq\widehat{T}(\mathbb{G}_m)$ is an isomorphism of $\widehat{\Z}$-modules (\cite[definition 1.3.6.1]{Lan2013}). As a result, for any prime $\ell$, we have the following isomorphisms:
	\begin{align*}
		\bfGSp_L(\Z_\ell)
		&
		\simeq
		\Isom_K((A_0,\lambda_0)\ldiv,(A_0,\lambda_0)\ldiv)
		\supset
		\Isom_K(\uA_0\ldiv,\uA_0\ldiv),
		\\
		\bfGSp_L(\mathbb{Q}_\ell)		
		&
		\simeq
		\QISG_{K}((A_0,\lambda_0)\ldiv,(A_0,\lambda_0)\ldiv)
		=
		\QISG_{K}(\uA_0\ldiv,\uA_0\ldiv).
	\end{align*}
	So in the following we will identify the restricted product $\prod_{\ell}'\QISG_{K}(\uA_0\ldiv,\uA_0\ldiv)$ with $\bfGSp_L(\A_f)$.
	In particular, we will view $U$ as a subgroup of $\bfGSp_L(\widehat{\Z})$.

	From Theorem \ref{description of H_p(A_0)}, we get the following bijection
	\begin{equation}\label{p-adic Hecke orbit of A, char=0}
		\Theta
		\colon
		\mathcal{H}_p(\uA_0)
		\simeq
		\bfG_{\uA_0}(\Z[1/p])
		\backslash
		\bfGSp_L(\mathbb{Q}_p)
		/U.
	\end{equation}
	Note that $Z_{\bfG_{\uA_0}}(\mathbb{Q}_p)$ acts naturally on the double quotient $\Gbf_{\underline{A}_0}(\Z[1/p])\backslash\bfSp_L(\mathbb{Q}_p)/U$ by left multiplication. We equip $\mathcal{H}_p(\uA_0)$ with an action of $Z_{\bfG_{\uA_0}}(\mathbb{Q}_p)$ by \emph{transport de structure} via the above bijection.

	\begin{example}
		Suppose $A_0$ is an abelian variety with CM by an order $\mathcal{O}$ of a CM field $K$, that is, we have an embedding $\mathcal{O}\hookrightarrow\mathrm{End}_K(A_0)$ and $\mathrm{dim}_{\mathbb{Q}}(K)=2\mathrm{dim}(A_0)$. Then for any $\Z$-algebra $R$,
		\[
		\bfG_{\uA_0}^1(R)
		=
		\{
		g\in\mathcal{O}\otimes_{\Z}R
		\mid
		g'g=1
		\}.
		\]
		In particular, $\bfG_{\uA_0}^1$ is a commutative group scheme over $\Z$.
	\end{example}

	\subsection{The case $K=\overline{\mathbb{F}}_\ell$}\label{case K=F_l}
	Assume $p\neq\ell$.
	We let $\uA_0$ be as in the preceding subsection ($K=\overline{\mathbb{Q}}$).
	We fix a place $v=v(\ell)$ of $\overline{\mathbb{Q}}$ over $\ell$ and assume that $\uA_0$ has \emph{supersingular} good reduction at $v$. We write
	\[
	\uA_{0}^{(\ell)}=(A_{0}^{(\ell)},\lambda_0,\overline{\eta})
	\]
	for the reduction modulo $v$ of $\uA_0$. Note that here we still use $\lambda_0$ to denote the polarization of $A_{0}^{(\ell)}$ induced by the polarization $\lambda_0$ on $A_0$ and similarly for the $U$-level structure. Then we again have isomorphisms
	\[
	\bfGSp_L(\widehat{\Z}^{(\ell)})
	\simeq
	\prod_{q\neq\ell}
	\Isom_K(\uA_{0}^{(\ell)}[q^\infty],\uA_{0}^{(\ell)}[q^\infty]).
	\]
	Similarly for the $\A_f^{(\ell)}$-points of $\bfGSp_L$. Since $\uA_{0}^{(\ell)}$ is supersingular, by a theorem of Deligne, Ogus and Shioda (\cite{Shioda1978}), $\uA_{0}^{(\ell)}$ is \emph{isogenous} to $E(\ell)^n$ where $E(\ell)$ is a supersingular elliptic curve over $K$. We write
	\[
	B(\ell)=\mathrm{End}_K(E(\ell))^\circ,
	\]
	the unique quaternion algebra over $\mathbb{Q}$ ramified exactly at $\ell$ and $\infty$. It follows
	\[
	\mathrm{Mat}_n(B(\ell))
	\simeq
	\mathrm{End}_K(A_{0}^{(\ell)})^\circ.
	\]
	The polarization $\lambda_0$ on $A_{0}^{(\ell)}$ induces the (Rosati) involution on $\mathrm{Mat}_n(B(\ell))$, which is positive definite. Therefore we have
	\[
	\bfG_{\uA_{0}^{(\ell)}}^1(\mathbb{Q})
	=
	\{
	g\in\mathrm{Mat}_n(B(\ell))
	\mid
	g'g=1_n
	\},
	\]
	which is a quaternionic special unitary group compact at infinity (that is, $\bfG_{\uA_{0}^{(\ell)}}^1(\mathbb{R})$ is compact). Moreover, for any prime $q\neq\ell$, we have an isomorphism
	\[
	\bfG_{\uA_{0}^{(\ell)}}(\Z_q)
	\simeq
	\Isom_K(\uA_{0}^{(\ell)}[q^\infty],\uA_{0}^{(\ell)}[q^\infty]).
	\]
	\begin{remark}
		In the following we will identify the groups $\bfGSp_L(\Z_p)\simeq\bfG_{\uA_{0}^{(\ell)}}(\Z_p)$ (similarly for $\mathbb{Q}_p$-points instead of $\Z_p$-points). In particular, we view subgroups of $\bfGSp_L(\mathbb{Q}_p)$ also as subgroups of $\bfG_{\uA_{0}^{(\ell)}}(\mathbb{Q}_p)$ (and vice versa) without further comments.
	\end{remark}

	From Theorem \ref{description of H_p(A_0)}, we get the following bijection
	\begin{equation}\label{p-adic Hecke orbit of A, char=ell}
		\Theta_\ell
		\colon
		\mathcal{H}_p(\uA_{0}^{(\ell)})
		\simeq
		\Gbf_{\uA_{0}^{(\ell)}}(\Z[1/p])
		\backslash
		\bfGSp_L(\mathbb{Q}_p)
		/U.
	\end{equation}

	\subsection{Reduction of $p$-adic Hecke orbit of $\uA_0$}\label{Reduction of p-adic Hecke orbit of A}
	We fix a non-empty finite set $\mathcal{S}$ of primes $\ell$ different from $p$ and for each $\ell\in\Scal$, we fix a place $v(\ell)$ of $\overline{\Q}$ over $\ell$. Write $k_v$ for the residue field of $\overline{\mathbb{Q}}$ at $v=v(\ell)$ (so $k_v\simeq\overline{\F}_\ell$). We assume that $\uA_0$ has \emph{supersingular} good reduction at $v$. For each $\ell\in\mathcal{S}$, fix a non-empty finite subset $\mathcal{T}_\ell$ of $Z_{\bfG_{\uA_0}}(\A_f)$ and write
	\[
	\mathcal{T}=\{\mathcal{T}_\ell\mid\ell\in\mathcal{S}\}.
	\]
	Then by \cite[Corollary 2, p.493]{SerreTate1968}, we know that all elements $\uA$ in $\mathcal{H}_p(\uA_0)$ have good reduction at $v$. We write $\uA^{(\ell)}$ for the reduction modulo $v$ of $\uA$.
	Therefore we have a well-defined map induced by the reduction modulo $v$:
	\[
	\bfR_{\ell}
	\colon
	\mathcal{H}_p(\uA_0)
	\mapsto
	\mathcal{H}_p(\uA_{0}^{(\ell)}),
	\quad
	\uA
	\mapsto
	\uA^{(\ell)}.
	\]
	Then we define the following map of simultaneous reduction of the $p$-adic Hecke orbit of $\uA_0$
	\begin{equation*}
		\bfR_{\mathcal{S},\mathcal{T}}
		\colon
		\mathcal{H}_p(\uA_0)
		\rightarrow
		\prod_{\ell\in\mathcal{S}}
		\prod_{\sigma\in\mathcal{T}_\ell}\mathcal{H}_p(\uA_{0}^{(\ell)}),
		\quad
		\uA
		\mapsto
		((\sigma\uA)_{\ell})_{\ell\in\mathcal{S},\sigma\in\mathcal{T}_\ell}.
	\end{equation*}

	For each $\ell\in\mathcal{S}$, we have a partition of $\mathcal{T}_\ell$, given by the pre-images of $\overline{\mathcal{T}}(\ell)$ under the natural projection map
	$\pi\colon
	Z_{\bfG_{\uA_0}}(\mathbb{Q}_p)
	\rightarrow
	Z_{\bfG_{\uA_0}}(\mathbb{Q}_p)/Z_{\bfG_{\uA_0}}(\mathbb{Q})$:
	\begin{equation}\label{partition of T_l}
		\mathcal{T}_\ell
		=
		\bigsqcup_{i=1}^{r(\ell)}
		\mathcal{T}_{\ell,i}
		\quad
		\text{where }
		\sigma,\sigma'\in\mathcal{T}_{\ell,i}
		\Leftrightarrow
		\pi(\sigma_p)
		=
		\pi(\sigma'_p).
	\end{equation}
	In other words, $\sigma$ and $\sigma'$ lie in the same $\mathcal{T}_{\ell,i}$ if and only if $\sigma_p^{-1}\sigma_p'\in Z_{\bfG_{\uA_0}}(\mathbb{Q})$.

	We write
	\begin{align*}
		\Gamma_p(\ell)
		:=
		\bfG_{\uA_{0}^{(\ell)}}(\Z[1/p]),
		\quad
		\Gamma_p(\ell)_\sigma
		:=
		\sigma^{-1}
		\Gamma_p(\ell)
		\sigma,
		\\
		\Gamma_p^1(\ell)
		:=
		\bfG_{\uA_{0}^{(\ell)}}^1(\Z[1/p]),
		\quad
		\Gamma_p^1(\ell)_\sigma
		:=
		\sigma^{-1}
		\Gamma_p^1(\ell)
		\sigma.
	\end{align*}
	Then we have the following bijection:
	\[    
	\Theta_{\ell,\sigma}
	\colon
	\Gamma_p(\ell)_{\sigma}
	\backslash
	\bfGSp_L(\mathbb{Q}_p)
	/U
	\rightarrow
	\mathcal{H}_p(\uA_{0}^{(\ell)})
	\quad
	\Gamma_p(\ell)_{\sigma}gU
	\mapsto
	\Theta_{\ell}^{-1}(\sigma g).
	\]
	\begin{lemma}\label{bijections between PSp_L and PGSp_L}
		Suppose that $\Z_p^\times\subset U$, then we have the following natural bijections
		\[
		\Gamma_p^1(\ell)_\sigma\{\pm1\}\backslash\bfSp_L(\Q_p)/U^1
		\simeq
		\Gamma_p^1(\ell)_\sigma\backslash\bfSp_L(\Q_p)/U^1
		\simeq
		\Gamma_p(\ell)_\sigma\backslash\bfGSp_L(\Q_p)/U
		\simeq
		\Gamma_p(\ell)_\sigma\Q_p^\times\backslash\bfGSp_L(\Q_p)/U.
		\]
		Here $U^1=U\cap\bfSp_L(\Q_p)$.
	\end{lemma}
	\begin{proof}
		Clearly we have $\pm p^\Z\subset\Gamma_p(\ell)_\sigma$ and $\{\pm1\}\subset\Gamma_p^1(\ell)_\sigma$. Moreover, by assumption, $\Z_p^\times\subset U$ and $\{\pm1\}\subset U^1$, thus we have the first bijection and the last bijection.

		Note that the reduced norm of $B(\ell)$ is the subset of $\Q$, consisting of $x\in\Q$ which is a square in $\R$ and $\Q_\ell$.
		Then by \cite[p.90, Corollaire 5.9]{Vigneras1980}, we have $\det(\Gamma_p(\ell)_\sigma)=\det(\Gamma_p(\ell))\supset p^\Z$, which gives the middle bijection.
	\end{proof}

	For any non-empty subset $\mathcal{T}'$ of $\mathcal{T}_{\ell}$,
	we define the twisted diagonal map
	\begin{equation}\label{twisted diagonal map}
		\widetilde{\Delta}^{\mathcal{T}'}
		\colon
		\mathcal{H}_p(\uA_{0}^{(\ell)})
		\rightarrow
		\prod_{\sigma\in\mathcal{T}'}
		\mathcal{H}_p(\uA_{0}^{(\ell)})
	\end{equation}
	to be the following composition map
	\[
	\begin{tikzcd}
		\mathcal{H}_p(\uA_{0}^{(\ell)})
		\arrow[rrr,"\Theta_{\ell,1}^{-1}"]
		\arrow[d,dashrightarrow,"\widetilde{\Delta}^{\mathcal{T}'}"]
		&&&
		\Gamma_p(\ell)
		\backslash
		\bfGSp_L(\mathbb{Q}_p)
		/
		U
		\arrow[d,"g\mapsto(\sigma g)_{\sigma\in\mathcal{T}'}"]
		\\
		\prod_{\sigma\in\mathcal{T}'}
		\mathcal{H}_p(\uA_{0}^{(\ell)})
		&&&
		\prod_{\sigma\in\mathcal{T}'}
		\Gamma_p(\ell)_{\sigma}
		\backslash
		\bfGSp_L(\mathbb{Q}_p)
		/
		U
		\arrow[lll,"\prod_{\sigma\in\mathcal{T}'}\Theta_{\ell,\sigma}"']
	\end{tikzcd}
	\]

	Here is one of the main theorems of this article
	(Theorem \ref{main result})
	\begin{theorem}\label{R_{S,T} is an isomorphism}
		The image of the simultaneous reduction map
		\[
		\bfR_{\mathcal{S},\mathcal{T}}
		\colon
		\mathcal{H}_p(\uA_0)
		\rightarrow
		\prod_{\ell\in\mathcal{S},\sigma\in\mathcal{T}_{\ell}}
		\mathcal{H}_p(\uA_{0}^{(\ell)}),
		\quad
		\uA
		\mapsto
		((\sigma\uA)_{\ell})_{\ell\in\mathcal{S},\sigma\in\mathcal{T}_\ell}
		\]
		is given by
		$\prod_{\ell\in\mathcal{S}}
		\prod_{i=1}^{r(\ell)}
		\widetilde{\Delta}^{\mathcal{T}_{\ell,i}}
		\left(
		\mathcal{H}_p(\uA_{0}^{(\ell)})
		\right)$. In particular, it is in bijection with
		$\prod_{\ell\in\mathcal{S},\overline{\sigma}\in\overline{\mathcal{T}}_\ell}
		\mathcal{H}_p(\uA_{0}^{(\ell)})$
		as stated in Theorem \ref{main result}.
	\end{theorem}

	Let $\mathcal{S}$ and $\mathcal{T}$ be as in the above theorem, then using the descriptions of the sets $\mathcal{H}_p(\uA_0)$ and $\mathcal{H}_p(\uA_{0}^{(\ell)})$ as in (\ref{p-adic Hecke orbit of A, char=0}) and (\ref{p-adic Hecke orbit of A, char=ell}), as well as Lemma \ref{bijections between PSp_L and PGSp_L}, we have the following commutative diagram
	\begin{equation*}
		\begin{tikzcd}
			\mathcal{H}_p(\uA_0)
			\arrow[r,"\bfR_{\ell}"]
			\arrow[d,"\Theta"]
			&
			\mathcal{H}_p(\uA_{0}^{(\ell)})
			\arrow[d,"\Theta_\ell"]
			\\
			\Gbf_{\underline{A}_0}(\Z[1/p])\backslash\bfGSp_L(\mathbb{Q}_p)/U
			\arrow[r]
			&
			\Gamma_p(\ell)\backslash
			\bfGSp_L(\mathbb{Q}_p)/U.
		\end{tikzcd}
	\end{equation*}
	Here the bottom horizontal map is the natural projection induced by the inclusion $\bfG_{\uA_0}\subset\bfG_{\uA_{0}^{(\ell)}}$.
	Similarly, we have
	\begin{equation*}
		\begin{tikzcd}
			\mathcal{H}_p(\uA_0)
			\arrow[r,"\bfR_{\Scal,\Tcal}"]
			\arrow[d,"\Theta"]
			&
			\prod_{\ell\in\Scal,\sigma\in\Tcal}\mathcal{H}_p(\uA_{0}^{(\ell)})
			\arrow[d,"\Theta_\ell"]
			\\
			\Gbf_{\underline{A}_0}(\Z[1/p])\backslash\bfGSp_L(\mathbb{Q}_p)/U
			\arrow[r]
			&
			\prod_{\ell\in\Scal,\sigma\in\Tcal}
			\Gamma_p(\ell)_\sigma\backslash
			\bfGSp_L(\mathbb{Q}_p)/U.
		\end{tikzcd}
	\end{equation*}
	We deduce from this commutative diagram that Theorem \ref{R_{S,T} is an isomorphism} follows from the theorem below

	\begin{theorem}\label{surjectivity of Pi_R}
		The closure of the image of the simultaneous \emph{projection} map
		\[
		\bfSp_L(\mathbb{Q}_p)
		\rightarrow
		\prod_{\ell\in\mathcal{S},\sigma\in\mathcal{T}_{\ell}}
		\Gamma_p^1(\ell)_{\sigma}
		\backslash
		\bfSp_L(\mathbb{Q}_p)
		\]
		is equal to
		\[
		\prod_{\ell\in\mathcal{S}}
		\prod_{i=1}^{r(\ell)}
		\Gamma_p^1(\ell)_{i}
		\backslash
		\left(
		\Gamma_p^1(\ell)_{i}
		\Delta^{\mathcal{T}_{\ell,i}}(\bfSp_L(\mathbb{Q}_p))
		\right),
		\]		
		where
		$\Gamma_p^1(\ell)_{i}
		=
		\prod_{\sigma\in\mathcal{T}_{\ell,i}}
		\Gamma_p^1(\ell)_{\sigma}$ and $\Delta^{\Tcal_{\ell,i}}\colon\bfSp_L(\Q_p)\to\bfSp_L(\Q_p)^{\Tcal_{\ell,i}}$ is the diagonal embedding.
	\end{theorem}
	The proof of this theorem will be given in §\ref{Proof of the main result}.

	\section{Prime-to-$N$ Hecke orbits of $\uA_0$}\label{prime-to-N Hecke orbits of A_0}
	In this section we are concerned with prime-to-$N$ Hecke orbits of $\uA_0$. This is almost the same as the preceding section, up to changing `$p$-adic' to `prime-to-$N$' everywhere. So we will be very brief in this section and state only the main results.

	We fix a polarized abelian variety $\uA_0=(A_0,\lambda_0)$ over an algebraically closed field $K$ and a positive integer $N$. We write $n=\mathrm{dim}(A_0)$. Let $U$ be a compact open subgroup of the restricted product
	$\prod_{\ell\nmid N}'\QISG_K(\uA_0\ldiv,\uA_0\ldiv)$. Then we write
	\[
	\uA_0=(A_0,\lambda_0,\overline{\eta}_0)
	\]
	where $\overline{\eta}_0$ is the $U$-orbit of the identity in $\prod_{\ell\nmid N}\Isom_K(\uA_0\ldiv,\uA_0\ldiv)$. We define the two sets $\QISG_{K}(\uA_1,\uA_2)$ and $\QISG_{K}(\uA_1\pdiv,\uA_2\pdiv)$ as in the preceding section. Moreover, we write
	\[
	\QISG_{K}^{(N)}(\uA_1,\uA_2)
	\]
	for the subset of $\QISG_{K}(\uA_1,\uA_2)$ consisting of those $\phi$ whose degree does not contain prime factors dividing $N$.

	The \emph{prime-to-$N$ Hecke orbit of $\uA_0$}, denoted by
	\[
	\mathcal{H}^{(N)}(\uA_0),
	\]
	is the set of 
	isomorphism classes of polarized abelian varieties $\uA$ with a $U$-level structure over $K$ such that
	\[
	\QISG_{K}^{(N)}(\uA_0,\uA)\neq\emptyset.
	\]
	Then we have a natural bijection
	\[
	\psi
	\colon
	\mathcal{H}^{(N)}(\uA_0)
	\simeq
	\QISG_{K}^{(N)}(\uA_0,\uA_0)
	\big\backslash
	\prod_{\ell\nmid N}\QISG_{K}(\uA_0\ldiv,\uA_0\ldiv)
	\big/
	U.
	\]

	We write
	\begin{align*}
		\Gamma^{(N)}(\ell)
		=
		\bfG_{\uA_{0}^{(\ell)}}(\Z^{(N)}),
		\quad
		\Gamma^{(N)}(\ell)_\sigma
		:=
		\sigma^{-1}\Gamma^{(N)}(\ell)\sigma,
		\\
		\Gamma^{(N),1}(\ell)
		=
		\bfG_{\uA_{0}^{(\ell)}}^1(\Z^{(N)}),
		\quad
		\Gamma^{(N),1}(\ell)_\sigma
		:=
		\sigma^{-1}\Gamma^{(N)}(\ell)\sigma.
	\end{align*}
	For $K=\overline{\mathbb{Q}}$, assume that $\uA_0$ satisfies the same conditions  as in §\ref{case K=Qbar}, then the above bijection becomes
	\[
	\Theta
	\colon
	\mathcal{H}^{(N)}(\uA_0)
	\simeq
	\Gamma^{(N)}(\ell)
	\backslash
	\bfGSp_L(\A_f^{(N)})/U.
	\]

	For $\ell\nmid N$, then we have a bijection
	\[
	\Theta_\ell
	\colon
	\mathcal{H}^{(N)}(\uA_{0}^{(\ell)})
	\simeq
	\bfG_{\uA_{0}^{(\ell)}}(\Z^{(N)})
	\backslash
	\bfGSp_L(\A_f^{(N)})/U.
	\]

	Now we fix a non-empty finite set $\mathcal{S}$ of primes $\ell\nmid N$ and for each $\ell\in\mathcal{S}$, fix a non-empty finite set $\mathcal{T}_\ell$ of elements in $Z_{\bfG_{\uA_0}}(\A_f)$ and write $\mathcal{T}=\{\mathcal{T}_\ell|\ell\in\mathcal{S}\}$. We have a partition of each $\mathcal{T}_\ell$
	\[
	\mathcal{T}_\ell
	=
	\bigsqcup_{i=1}^{r(\ell)}\mathcal{T}_{\ell,i}
	\quad
	\text{ where }
	\sigma,\sigma'\in\mathcal{T}_{\ell,i}
	\Leftrightarrow
	\sigma^{-1}\sigma'
	\in
	Z_{\bfG_{\uA_0}}(\mathbb{Q})\prod_{q\mid N}Z_{\bfG_{\uA_0}}(\mathbb{Q}_q).
	\]
	We define the twisted diagonal map $\widetilde{\Delta}^{\mathcal{T}'}$ in a similar way to the preceding section.

	Then the main result of this section is
	\begin{theorem}\label{R_{S,T}^N is an isomorphism}
		The image of the simultaneous reduction map
		\[
		\bfR_{\mathcal{S},\mathcal{T}}^{(N)}
		\colon
		\mathcal{H}^{(N)}(\uA_0)
		\to
		\prod_{\ell\in\mathcal{S},\sigma\in\mathcal{T}_{\ell}}
		\mathcal{H}^{(N)}(\uA_{0}^{(\ell)}),
		\quad
		\uA
		\mapsto
		((\sigma\uA)_{\ell})_{\ell\in\mathcal{S},\sigma\in\mathcal{T}_\ell}
		\]
		is given by
		$\prod_{\ell\in\mathcal{S}}\prod_{i=1}^{r(\ell)}
		\widetilde{\Delta}^{\mathcal{T}_{\ell,i}}(\mathcal{H}^{(N)}(\uA_{0}^{(\ell)}))$, which is in bijection with $\prod_{\ell\in\mathcal{S},\overline{\sigma}\in\overline{\mathcal{T}}(\ell)}
		\mathcal{H}^{(N)}(\uA_{0}^{(\ell)})$ as stated in Theorem \ref{main theorem for prime-to-N Hecke orbit}.
	\end{theorem}

	This theorem follows from
	\begin{theorem}\label{surjectivity of Pi_R^N}
		The closure of the image of the following simultaneous \emph{projection} map
		\[
		\bfSp_L(\A_f^{(N)})
		\rightarrow
		\prod_{\ell\in\mathcal{S},\sigma\in\mathcal{T}_{\ell}}		
		\Gamma^{(N),1}(\ell)_\sigma
		\backslash
		\bfSp_L(\A_f^{(N)})
		\]
		is equal to
		\[
		\prod_{\ell\in\mathcal{S}}
		\prod_{i=1}^{r(\ell)}
		\Gamma^{(N),1}(\ell)_i
		\backslash
		\left(
		\Gamma^{(N),1}(\ell)_i
		\Delta^{\mathcal{T}_{\ell,i}}(\bfSp_L(\A_f^{(N)}))
		\right)
		\]
		where
		$\Gamma^{(N),1}(\ell)_i
		=\prod_{\sigma\in\mathcal{T}_{\ell,i}}\Gamma^{(N),1}(\ell)_{\sigma}$.
	\end{theorem}
	The proof of this theorem will be given in §\ref{Proof of the main result}.

	\section{Commensurability criterion}\label{Commensurability criterion}
	We say that two subgroups $G_1,G_2$ of a group $G$ are \textit{commensurable} if $G_1\bigcap G_2$
	has finite index in	both $G_1$ and $G_2$. The \textit{commensurator} of $G_1$ inside $G$ is the set of elements $g$ in $G$ such that $G_1$ and $gG_1g^{-1}$ are commensurable and we denote it by $\Ccal_G(G_1)$. Suppose $G_2=gG_1g^{-1}$ for some $g\in G$. Then $G_1$ and $G_2$ are commensurable if and only if $g\in\Ccal_G(G_1)$.

	\begin{theorem}\label{commensurable}
		Fix a prime $\ell\in\mathcal{S}$. For any $\sigma_1,\sigma_2\in\mathcal{T}_\ell$, the $p$-th component $(\sigma_1\sigma_2^{-1})_p\notin \Q_p^\times Z_{\bfG_{\uA_0}(\Q)}$ if and only if the subgroups $\Gamma_p(\ell)_{\sigma_1}$ and $\Gamma_p(\ell)_{\sigma_2}$ of $\bfGSp_L(\mathbb{Q}_p)$ are not commensurable.
	\end{theorem}
	\begin{proof}
		For ease of notations, we write
		\begin{align*}
			&
			B=B(\ell),
			\quad
			\sigma=\sigma_1\sigma_2^{-1},
			\\
			&
			\bfG=\bfG_{\uA_{0}^{(\ell)}},
			\quad
			\bfG^1=\bfG_{\uA_{0}^{(\ell)}}^1,
			\\
			&
			\Gamma=\Gamma_p(\ell),
			\quad
			\Gamma_\sigma=\Gamma_p(\ell)_{\sigma},
		\end{align*}
		and
		$\psi_\sigma\colon\bfG(\mathbb{Q}_p)\rightarrow\bfG(\mathbb{Q}_p)$ the inner automorphism sending $h$ to $\sigma_ph\sigma_p^{-1}$. In particular, $\psi_{\sigma}$ sends the subgroup $\Gamma\bigcap\Gamma_\sigma$ of $\bfG(\mathbb{Q})$ to the subgroup $\psi_\sigma(\Gamma\bigcap\Gamma_\sigma)$ of $\bfG(\mathbb{Q})$.

		We first prove the `only if' part. So we assume $\sigma_p
		\notin \Q_p^\times Z_{\bfG_{\uA_0}(\Q)}$ and it suffices to prove that $\Gamma$ and $\Gamma_{\sigma}$ are not commensurable. We prove this by contradiction. Since $\Gamma$ and $\Gamma_\sigma$ are both Zariski dense in $\bfG(\mathbb{Q}_p)$ and $\Gamma,\Gamma_\sigma$ are commensurable, $\Gamma\bigcap\Gamma_\sigma$ is also Zariski dense in $\Gbf(\Q_p)$. Thus we have an automorphism
		\[
		\psi_\sigma
		\colon
		\mathrm{Lie}(\bfG)\rightarrow\mathrm{Lie}(\bfG),
		\]
		where $\mathrm{Lie}(\bfG)$ is the Lie algebra of $\bfG$ over $\mathbb{Q}$ (\emph{cf.} \cite[Exercise 5.2.4]{Morris2001}). Moreover, by the correspondence between algebraic groups and Lie algebras (\cite[§14.1]{Humphreys1975}), we have
		\[
		\mathrm{Ad}(\bfG)=\mathrm{Aut}(\mathrm{Lie}(\bfG)).
		\]
		Here we use the fact that $\mathrm{Aut}(\mathrm{Lie}(\bfG))$ is connected. Since $\bfG$ has no outer automorphisms, we have $\mathrm{Ad}(\bfG)=\mathrm{Aut}(\bfG)$ and thus $\psi_\sigma\in\mathrm{Aut}(\bfG)$.

		We claim that the element $\sigma_p$	also lies in
		$Z_{\bfG(\Q_p)}\bfG(\mathbb{Q})=\mathbb{Q}_p^\times\bfG(\mathbb{Q})$. Assuming this claim, we have
		\[
		\sigma
		\in
		Z_{\bfG_{\uA_0}(\Q_p)}
		\bigcap
		\mathbb{Q}_p^\times\bfG(\mathbb{Q})
		\subset
		\mathbb{Q}_p^\times
		(Z_{\bfG_{\uA_0}(\Q_p)}\bigcap\bfG(\mathbb{Q}))
		=
		\mathbb{Q}_p^\times Z_{\bfG_{\uA_0}(\Q)},
		\]
		which contradicts our assumption $\sigma\notin\Q_p^\times Z_{\bfG_{\uA_0}(\Q)}$ and thus $\Gamma$ and $\Gamma_\sigma$ are not commensurable. Now we prove the claim. Note that $\psi_\sigma$ induces an automorphism of the adjoint quotient $\Pbf\Gbf$ of $\Gbf$, which is a Chevalley group of normal type (in the sense of \cite[§2]{Steinberg1960}, but for infinite Chevalley groups instead of finite ones). By \cite{Humphreys1969}, the automorphism $\psi_\sigma$ of $\Pbf\Gbf(\Q)$ is a composition of inner automorphism $i$, diagonal automorphism $d$, field automorphism $f$ and graph automorphism $g$ of $\Pbf\Gbf(\Q)$ (we refer to \emph{loc.cit} for explanations of these terminologies):
		\[
		\psi_\sigma=gfdi.
		\]
		Moreover, $g$ and $f$ are uniquely determined by $\psi_\sigma$ (this is proved in \cite{Steinberg1960} for the case of finite Chevalley groups. However, it generalizes without modification to infinite Chevalley groups as in \cite{Humphreys1969}) and $d$ is trivial because $\Q$ has no non-trivial automorphism of order $2$.
		On the other hand, $\psi_\sigma$ induces an \emph{inner} automorphism of $\Pbf\Gbf(\Q_p)$ (that is, conjugation by $\sigma_p$), thus we must have
		\[
		g=f=1.
		\]
		In particular, the automorphism $\psi_\sigma$ of $\Pbf\Gbf(\Q)$ is inner, that is, there exists $C\in\Gbf(\Q)$ such that
		\[
		\sigma_ph\sigma_p^{-1}=ChC^{-1},
		\quad
		\forall
		h\in\Pbf\Gbf(\Q).
		\]
		It follows immediately that $\sigma\in Z_{\Gbf(\Q_p)}\Gbf(\Q)=\Q_p^\times\Gbf(\Q)$, which proves the claim. In fact, we have proved the following
		\begin{equation}\label{Comm(Gamma)}
			\Ccal_{\bfGSp_L(\mathbb{Q}_p)}(\Gamma)
			=
			\mathbb{Q}_p^\times\bfG(\mathbb{Q}).
		\end{equation}

		Next we prove the `if' part: suppose $\sigma\in Z_{\bfG_{\uA}(\Q_p)}\Gbf(\Q)$, then $\sigma\in \Gbf(\Q)\Q_p^\times$ and by definition, we have
		\[
		\Gamma
		=
		\Gbf(\Q)\bigcap\Gbf(\widehat{\Z}^{(p)}),
		\quad
		\Gamma_\sigma
		=
		\sigma^{-1}(\Gbf(\Q)\bigcap\Gbf(\widehat{\Z}^{(p)}))\sigma
		=
		\Gbf(\Q)
		\bigcap
		\sigma^{-1}\Gbf(\widehat{\Z}^{(p)})\sigma.
		\]
		Since $\Gbf(\widehat{\Z}^{(p)})$ and $\sigma^{-1}\Gbf(\widehat{\Z}^{(p)})\sigma$ are both compact and open subgroups in $\Gbf(\widehat{\Z}^{(p)})$, they are thus commensurable. It follows that $\Gamma$ and $\Gamma_\sigma$ are also commensurable.
	\end{proof}

	\begin{theorem}\label{non-commensurable for different primes}
		For distinct $\ell_1,\ell_2\in\mathcal{S}$ and any $\sigma_i\in\mathcal{T}(\ell_i)$ ($i=1,2$), the subgroups
		$\Gamma_p(\ell_1)_{\sigma_1}$ and $\Gamma_p(\ell_2)_{\sigma_2}$ of $\bfGSp_L(\mathbb{Q}_p)$ are not commensurable.
	\end{theorem}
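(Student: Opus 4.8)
The plan is to argue by contradiction. Suppose $\mathbf{G}^1[\ell_1,\sigma_1]$ and $\mathbf{G}^1[\ell_2,\sigma_2]$ are commensurable; I will manufacture from this an isomorphism of $\mathbb{Q}$-algebras $\mathcal{B}(\ell_1)\simeq\mathcal{B}(\ell_2)$, which is impossible because the quaternion algebras $B(\ell_1)$ and $B(\ell_2)$ are ramified at the \emph{distinct} sets of places $\{\ell_1,\infty\}$ and $\{\ell_2,\infty\}$. First I would normalise the configuration: conjugating both subgroups by $\epsilon(\sigma_1)\in\mathbf{G}^1(\mathbb{Q}_p)$ and writing $c=\epsilon(\sigma_1\sigma_2^{-1})\in\mathbf{G}^1(\mathbb{Q}_p)$, the assumed commensurability becomes commensurability of $\mathbf{G}^1[\ell_1]$ with $c\,\mathbf{G}^1[\ell_2]\,c^{-1}$. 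Put $\Delta=\mathbf{G}^1[\ell_1]\cap c\,\mathbf{G}^1[\ell_2]\,c^{-1}$; it has finite index in each factor, hence is again a cocompact lattice in $\mathbf{G}^1(\mathbb{Q}_p)=\mathrm{Sp}_{2n}(\mathbb{Q}_p)$, and therefore Zariski dense by the Borel density theorem.

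The heart of the matter is a descent step in the spirit of the $\ell_1=\ell_2$ case of Theorem~\ref{commensurable}. Using the $\mathbb{Q}_p$-algebra isomorphisms $\Omega(\ell_i)\colon\mathcal{B}(\ell_i)_p\xrightarrow{\sim}\mathrm{Mat}_{2n}(\mathbb{Q}_p)$ of (\ref{Omega}) and conjugation by $c\in\mathrm{GL}_{2n}(\mathbb{Q}_p)$, I form
\[
\widetilde{\phi}\;=\;\Omega(\ell_2)^{-1}\circ\bigl(x\mapsto c^{-1}xc\bigr)\circ\Omega(\ell_1)\;\colon\;\mathcal{B}(\ell_1)_p\xrightarrow{\ \sim\ }\mathcal{B}(\ell_2)_p .
\]
On the one hand $\Omega(\ell_1)^{-1}(\Delta)$ is a finite-index subgroup of the arithmetic group $\Omega(\ell_1)^{-1}(\mathbf{G}^1[\ell_1])\subset\mathcal{G}^1(\ell_1)(\mathbb{Q})$; since it is Zariski dense in $\mathcal{G}^1(\ell_1)(\mathbb{Q}_p)$ and $\mathrm{Sp}_{2n}$ acts irreducibly on $\mathbb{Q}_p^{2n}$ (as $n>1$), its $\mathbb{Q}$-linear span inside $\mathcal{B}(\ell_1)$ is a $\mathbb{Q}$-subalgebra whose $\mathbb{Q}_p$-span is all of $\mathcal{B}(\ell_1)_p$, hence it equals $\mathcal{B}(\ell_1)$ --- this is exactly the span observation already exploited in the proof of Theorem~\ref{commensurable}. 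On the other hand, by construction $\widetilde{\phi}\bigl(\Omega(\ell_1)^{-1}(\Delta)\bigr)=\Omega(\ell_2)^{-1}(c^{-1}\Delta c)\subset\Omega(\ell_2)^{-1}(\mathbf{G}^1[\ell_2])\subset\mathcal{B}(\ell_2)$. Therefore $\widetilde{\phi}$ carries the $\mathbb{Q}$-form $\mathcal{B}(\ell_1)$ into $\mathcal{B}(\ell_2)$, and comparing $\mathbb{Q}$-dimensions it restricts to a $\mathbb{Q}$-algebra isomorphism $\mathcal{B}(\ell_1)\xrightarrow{\sim}\mathcal{B}(\ell_2)$ (it even intertwines the involutions, since both $\Omega(\ell_i)$ and conjugation by $c\in\mathrm{Sp}_{2n}$ do, so one also obtains $\mathcal{G}(\ell_1)\simeq\mathcal{G}(\ell_2)$ over $\mathbb{Q}$).

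To close the argument, an isomorphism $\mathrm{Mat}_n(B(\ell_1))\simeq\mathrm{Mat}_n(B(\ell_2))$ forces $B(\ell_1)\simeq B(\ell_2)$ in $\mathrm{Br}(\mathbb{Q})$, hence equality of their ramification loci, i.e.\ $\ell_1=\ell_2$, contradicting the hypothesis $\ell_1\ne\ell_2$. In the write-up I would phrase this last step, equivalently, via the classification of reductive dual pairs in quaternionic unitary groups, which directly separates the $\mathbb{Q}$-forms $\mathcal{G}(\ell_i)$; the Brauer-class formulation is just a shortcut that exhibits the obstruction.

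I expect the descent step to be the only real difficulty: one must be sure that the a priori purely $\mathbb{Q}_p$-rational isomorphism $\widetilde{\phi}$ genuinely descends to $\mathbb{Q}$, and this is where the arithmeticity of the groups $\mathbf{G}^1[\ell_i]$ is indispensable --- it is the two-distinct-primes counterpart of the Margulis-superrigidity input used when $\ell_1=\ell_2$. It is essential both that $\mathcal{G}^1(\ell_i)$ has $\mathbb{Q}_p$-rank $n\ge 2$, so that its cocompact lattices span the ambient matrix algebra, and that $\mathcal{G}^1(\ell_i)$ is anisotropic over $\mathbb{R}$ (because $B(\ell_i)$ ramifies at $\infty$), so that $\mathbf{G}^1[\ell_i]$ is honestly a lattice in the single group $\mathrm{Sp}_{2n}(\mathbb{Q}_p)$ and Borel density applies.
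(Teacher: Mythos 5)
Your proposal is correct, and it reaches the same obstruction as the paper (the non-isomorphy of the quaternion algebras $B(\ell_1)$ and $B(\ell_2)$), but by a genuinely different route at both key steps. The paper argues via commensurators: commensurability forces the two subgroups to have the same commensurator in $\mathbf{G}(\mathbb{Q}_p)$, which was computed in the proof of Theorem~\ref{commensurable} to be $\mathbb{Q}_p^\times\Omega(\ell_i)(\mathcal{G}(\ell_i)(\mathbb{Q}))$; a reduced-trace argument then strips the $\mathbb{Q}_p^\times$ factor to give $\Omega(\ell_1)(\mathcal{G}(\ell_1)(\mathbb{Q}))=\Omega(\ell_2)(\mathcal{G}(\ell_2)(\mathbb{Q}))$, and the contradiction is extracted from the classification of reductive dual pairs in quaternionic unitary groups. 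You instead work directly with the intersection lattice $\Delta$: its Zariski density plus the Burnside-type spanning fact (which the paper itself uses in Theorem~\ref{commensurable}) shows the $\mathbb{Q}$-span of $\Omega(\ell_1)^{-1}(\Delta)$ is all of $\mathcal{B}(\ell_1)$, so the purely local isomorphism $\widetilde{\phi}$ descends to a $\mathbb{Q}$-algebra isomorphism $\mathrm{Mat}_n(B(\ell_1))\simeq\mathrm{Mat}_n(B(\ell_2))$, and Artin--Wedderburn uniqueness of the underlying division algebra finishes the job. Your version buys two simplifications: it does not need the commensurator computation (which in the paper rests on a separate case analysis for $n>2$ versus $n=2$), and it replaces the dual-pair classification by an elementary Brauer-group fact; the paper's version buys uniformity with the $\ell_1=\ell_2$ case, whose machinery it reuses. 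One small quibble with your closing commentary: the spanning step needs only Zariski density of the lattice and absolute irreducibility of the standard representation of $\mathrm{Sp}_{2n}$, not that the $\mathbb{Q}_p$-rank be at least $2$; the hypothesis $n>1$ enters elsewhere (strong approximation, superrigidity) and is a standing assumption of the paper in any case.
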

	\begin{proof}
		To simplify notations, we write ($i=1,2$)
		\[
		\bfG_i
		=
		\bfG_{\uA_{0,\ell_i}},
		\quad
		\bfG_i^1
		=
		\bfG_{\uA_{0,\ell_i}}^1,
		\quad
		B_i=B(\ell_i).
		\]

		If $\Gamma_p(\ell_1)_{\sigma_1}$ and $\Gamma_p(\ell_2)_{\sigma_2}$ are commensurable, then they have the same commensurators inside $\bfGSp_L(\mathbb{Q}_p)$. From (\ref{Comm(Gamma)}), we deduce
		\[
		\Ccal_{\bfGSp_L(\mathbb{Q}_p)}(\Gamma_p(\ell_?)_{\sigma_?})
		=
		\mathbb{Q}_p^\times\sigma_?^{-1}
		\bfG_?(\mathbb{Q})\sigma_?
		\quad
		\text{where }
		?=1,2.
		\]

		We next show that for $\ell_1$ and $\ell_2$ distinct,  $\mathbb{Q}_p^\times\sigma_1^{-1}\bfG_1(\mathbb{Q})\sigma_1$ and 
		$\mathbb{Q}_p^\times\sigma_2^{-1}\bfG_2(\mathbb{Q})\sigma_2$ are also distinct: otherwise, take any $g_1\in \bfG_1(\mathbb{Q})$, then it can be written in the form
		\[
		\sigma_1^{-1}g_1\sigma_1=b\sigma_2^{-1}g_2\sigma_2,
		\]
		for some $b\in\mathbb{Q}_p^\times$ and $g_2\in \bfG_2(\mathbb{Q})$.		
		Write $\mathrm{Trd}$ for the reduced trace on $B_1$ and on $B_2$. Then we have
		\[
		\mathrm{Trd}
		(\mathrm{Tr}(g_i))
		=
		\mathrm{Trd}
		(\mathrm{Tr}(\sigma_i^{-1}\tau_{\ell_i}(g_i)\sigma_i)),
		\quad
		i=1,2.
		\]
		If $\mathrm{Trd}(\mathrm{Tr}(g_1))\ne0$, then $\mathrm{Trd}(\mathrm{Tr}(b))=2nb\in\mathbb{Q}^\times$, and thus $b\in\mathbb{Q}^\times$. Therefore
		\[
		\sigma_1^{-1}g_1\sigma_1
		\in
		\sigma_2^{-1}\bfG_2(\mathbb{Q})\sigma_2.
		\]		
		Now for any $g_1\in \bfG_1(\mathbb{Q})$, we can always find $g_1'\in \bfG_1(\mathbb{Q})$ such that $\mathrm{Trd}(\mathrm{Tr}(g_1g_1'))\neq0$ and $\mathrm{Trd}(\mathrm{Tr}(g_1'))\ne0$. One deduces that $\sigma_1^{-1}g_1\sigma_1\in\sigma_2^{-1}\bfG_2(\mathbb{Q})\sigma_2$ for any $g_1\in \bfG_1(\mathbb{Q})$. From this, it follows
		\[
		\sigma_1^{-1}\bfG_1(\mathbb{Q})\sigma_1
		=
		\sigma_2^{-1}\bfG_2(\mathbb{Q})\sigma_2.
		\]
		On the other hand, recall that $\Gbf_1$ is split at $\ell_2$ and non-split at $\ell_1$, while $\Gbf_2$ is split at $\ell_1$ and non-split at $\ell_2$. So there exists an element $h_1\in \sigma_1^{-1}\Gbf_1(\Q)\sigma_1\subset\sigma_1^{-1}\Gbf_1(\Q_{\ell_1})\sigma_1$ such that there is at least one eigenvalue of $h_1$ lies in $\overline{\Q}_{\ell_1}\backslash\Q_{\ell_1}$. But as $\Gbf_2$ is split at $\ell_1$, there does not exist such an element in $\sigma_2^{-1}\Gbf_2(\Q)\sigma_2$. This is a contradiction. Thus we must have $\Q_p^\times\sigma_1^{-1}\Gbf_1(\Q)\sigma_1\neq\Q_p^\times\sigma_2\Gbf_2(\Q)\sigma_2$, which proves the theorem.
	\end{proof}

	\section{Proof of Theorems \ref{surjectivity of Pi_R} and \ref{surjectivity of Pi_R^N}}\label{Proof of the main result}
	For self-containment,
	we reproduce part of \cite[§§3.6 and 3.7]{Cornut2002}. For a group $G$ and a non-empty finite set $\mathcal{T}$, we write the diagonal map
	\[
	\Delta\colon G\rightarrow\prod_{\sigma\in\mathcal{T}}G.
	\]
	For any $\sigma_0\in\mathcal{T}$, write
	\[
	\mathrm{pr}_{\sigma_0}\colon\prod_{\sigma\in\mathcal{T}}G\rightarrow
	G
	\]
	for the map of projecting to the $\sigma_0$-th component. For a non-empty subset $\mathcal{T}'$ of $\mathcal{T}$, we define a subgroup of $\prod_{\sigma\in\mathcal{T}}G$
	\[
	G^{\mathcal{T}'}
	=
	\left\{
	(g_{\sigma})
	\in
	\prod_{\sigma\in\mathcal{T}}
	\mid
	g_{\sigma}=1,
	\,
	\forall
	\sigma\notin\mathcal{T}'
	\right\}.
	\]

	A subgroup $H$ of $G^{\mathcal{T}}$ is called a \textit{product of diagonals} if there are \textit{disjoint} non-empty subsets $\mathcal{T}_1,\cdots,\mathcal{T}_r$ of $\mathcal{T}$ such that $H=\prod_{i=1}^r\Delta^{\mathcal{T}_i}(G)$ (we do not require $\Tcal=\bigsqcup_{i=1}^r\Tcal_i$). Then we have 
	\begin{proposition}\label{product of diagonals}
		Suppose that $G$ is simple and non-commutative. Then a subgroup $H$ of $G^{\mathcal{T}}$ is normalized by $\Delta^{\mathcal{T}}(G)$ if and only if it is a product of diagonals.
	\end{proposition}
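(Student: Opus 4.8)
\emph{Proof idea.} The ``if'' direction is a direct verification: if the $\mathcal{T}_i$ are pairwise disjoint then the subgroups $\Delta^{\mathcal{T}_i}(G)$ have pairwise disjoint supports in $G^{\mathcal{T}}$, hence pairwise commute, so $H=\prod_{i=1}^{r}\Delta^{\mathcal{T}_i}(G)$ is a subgroup; and for $g\in G$ one has $\Delta^{\mathcal{T}}(g)\Delta^{\mathcal{T}_i}(h)\Delta^{\mathcal{T}}(g)^{-1}=\Delta^{\mathcal{T}_i}(ghg^{-1})$, so $\Delta^{\mathcal{T}}(G)$ normalises each $\Delta^{\mathcal{T}_i}(G)$, hence $H$. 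For the converse, suppose $H\le G^{\mathcal{T}}$ is normalised by $\Delta^{\mathcal{T}}(G)$. Since $\mathrm{pr}_{\sigma}\circ\Delta^{\mathcal{T}}=\mathrm{id}_G$, each $\mathrm{pr}_{\sigma}(H)$ is normal in $G$, so by simplicity it is $\{1\}$ or $G$; discarding the coordinates with $\mathrm{pr}_{\sigma}(H)=\{1\}$ (this leaves $H$ unchanged and only shrinks the index set on which we must produce the $\mathcal{T}_i$), I may assume $\mathrm{pr}_{\sigma}(H)=G$ for all $\sigma\in\mathcal{T}$.

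The key point is the structure of the two-coordinate projections. For $\sigma\ne\sigma'$ the subgroup $\mathrm{pr}_{\{\sigma,\sigma'\}}(H)\le G\times G$ surjects onto both factors, so by Goursat's lemma and the simplicity of $G$ it is either $G\times G$ or the graph $\{(g,\phi(g)):g\in G\}$ of an automorphism $\phi$ of $G$. In the latter case, normalisation by the diagonal forces $h\phi(g)h^{-1}=\phi(hgh^{-1})$ for all $g,h$, i.e. $\mathrm{Inn}(\phi(h))=\mathrm{Inn}(h)$, i.e. $\phi(h)h^{-1}\in Z(G)$; as $G$ is simple and non-commutative, $Z(G)=\{1\}$, so $\phi=\mathrm{id}$ and the graph is the diagonal. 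Thus $\mathrm{pr}_{\{\sigma,\sigma'\}}(H)$ is $G\times G$ or $\Delta(G)$, and I declare $\sigma\sim\sigma'$ exactly in the second case (and $\sigma\sim\sigma$). This is an equivalence relation: for transitivity, given $\sigma\sim\sigma'$ and $\sigma'\sim\sigma''$, projecting $H$ to these three coordinates and using $\mathrm{pr}_{\sigma}(H)=G$ shows the triple projection is the full diagonal $\{(g,g,g):g\in G\}$, whence $\sigma\sim\sigma''$.

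Let $\mathcal{T}_1,\dots,\mathcal{T}_r$ be the equivalence classes. Since $\sigma\sim\sigma'$ forces $\mathrm{pr}_{\sigma}(h)=\mathrm{pr}_{\sigma'}(h)$ for every $h\in H$, each element of $H$ is constant along each class, so $H\subseteq\prod_{i=1}^{r}\Delta^{\mathcal{T}_i}(G)$. Identifying the right-hand side with $G^{r}$, the image $H'$ of $H$ surjects onto each of the $r$ factors (because $\mathrm{pr}_{\sigma}(H)=G$) and, choosing $\sigma\in\mathcal{T}_i$ and $\tau\in\mathcal{T}_j$ with $i\ne j$ and hence $\sigma\not\sim\tau$, surjects onto each pair of factors as all of $G\times G$. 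It therefore remains to establish the purely group-theoretic statement: a subgroup $H'\le G^{r}$ that surjects onto every factor and onto every pair of factors is all of $G^{r}$. I would prove this by induction on $r$, the cases $r\le2$ being immediate. For $r\ge3$: applying the inductive hypothesis to $\mathrm{pr}_{\{1,\dots,r-1\}}(H')$ gives $\mathrm{pr}_{\{1,\dots,r-1\}}(H')=G^{r-1}$; let $N=\{g\in G:(1,\dots,1,g)\in H'\}$. Conjugating $(1,\dots,1,n)\in H'$ by an element of $H'$ with prescribed last coordinate (available since $\mathrm{pr}_{r}(H')=G$) shows $N\triangleleft G$, so $N=\{1\}$ or $N=G$. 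If $N=G$, then $H'\supseteq G^{\{r\}}$ and $\mathrm{pr}_{\{1,\dots,r-1\}}(H')=G^{r-1}$ together give $H'=G^{r}$. If $N=\{1\}$, then $\mathrm{pr}_{\{1,\dots,r-1\}}$ identifies $H'$ with the graph of a surjective homomorphism $f\colon G^{r-1}\to G$; since the normal subgroups of a power of a non-commutative simple group are exactly the subproducts, $\ker f$ can have index $|G|$ only if it omits a single coordinate $i_0$, so $f(g_1,\dots,g_{r-1})=\psi(g_{i_0})$ for some $\psi\in\mathrm{Aut}(G)$, and then $\mathrm{pr}_{\{i_0,r\}}(H')$ is the graph of $\psi$, which is not all of $G\times G$, contradicting the hypothesis. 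Hence $N=G$ and $H'=G^{r}$, i.e. $H=\prod_{i=1}^{r}\Delta^{\mathcal{T}_i}(G)$.

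The conjugation identities, Goursat's lemma and the bookkeeping with equivalence classes are routine. The step I expect to require the most care is the final multi-factor Goursat lemma, and in particular tracking exactly where non-commutativity of $G$ enters: it is used twice, first to get $Z(G)=\{1\}$ so that a twisted two-coordinate diagonal collapses to the honest diagonal, and again to rule out the ``graph'' alternative $N=\{1\}$ in the induction.
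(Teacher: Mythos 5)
Your proof is correct, and it is essentially the standard argument: the paper itself gives no proof but defers to \cite[Proposition 3.10]{Cornut2002}, whose proof runs along the same Goursat-lemma lines (two-coordinate projections are $G\times G$ or a twisted diagonal, triviality of $Z(G)$ untwists the diagonal, and an induction on the number of factors finishes). The only cosmetic point is the phrase ``index $|G|$'' in the last step, which presumes $G$ finite; for the intended application $G=\mathbf{PG}^1(\mathbb{Q}_p)$ is infinite, but your argument survives verbatim upon replacing it by ``$G^{r-1}/\ker f\cong G$, and $G^{k}\not\cong G$ for $k\neq 1$ since $G^{k}$ is then not simple.''
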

	\begin{proof}
		This is \cite[Proposition 3.10]{Cornut2002}.
	\end{proof}

	\subsection{$G$ is simple}\label{G simple}
	From now on, we assume that $G$ is a \emph{simple and non-commutative} $p$-adic Lie group which is generated by one-parameter adjoint unipotent subgroups
	(in the sense of \cite{Ratner1995}). Let $(\Gamma_{\sigma})_{\sigma\in\mathcal{T}}$ be a finite set of discrete and cocompact subgroups of $G$. We write $\Gamma=\prod_{\sigma\in\mathcal{T}}\Gamma_{\sigma}$ and so $\Gamma\backslash G^{\mathcal{T}}$ is compact. We define then a partition of the finite set $\mathcal{T}$:
	\begin{equation}\label{partition of T}
		\mathcal{T}=\bigsqcup_{i=1}^r\mathcal{T}_i,
	\end{equation}
	such that $\sigma,\sigma'\in\mathcal{T}_i$ if and only if $\Gamma_{\sigma}$ and $\Gamma_{\sigma'}$ are commensurable (this is well-defined since commensurability is an equivalence relation).
	\begin{lemma}\label{Gamma_sigma and Gamma_sigma' comm or not}
		Take two elements $\sigma,\sigma'\in\mathcal{T}$. Write $\Gamma_{\sigma,\sigma'}=\Gamma_{\sigma}\times\Gamma_{\sigma'}$ and $\Delta\colon G\rightarrow G^2$ for the diagonal map. Then the closure of $\Gamma_{\sigma,\sigma'}\Delta(G)$ in $G^2$ is
		\[
		\begin{cases*}		
			\Gamma_{\sigma,\sigma'}\Delta(G),
			&
			if $\Gamma_{\sigma}$ and $\Gamma_{\sigma'}$ are commensurable;
			\\
			G^2,
			&
			if $\Gamma_{\sigma}$ and $\Gamma_{\sigma'}$ are not commensurable.
		\end{cases*}
		\]
	\end{lemma}
	\begin{proof}
		By assumption, $G$ is generated by one-parameter adjoint unipotent subgroups, thus we can apply Ratner's theorem on unipotent flows (\cite[Theorem 2]{Ratner1995}) to get that the closure of $\Gamma_{\sigma,\sigma'}\Delta(G)$ in $G^2$ is of the form $\Gamma_{\sigma,\sigma'}H$ for some closed subgroup $H$ of $G^2$. Clearly $H$ is normalized by $\Delta^{\mathcal{T}}(G)$ and thus by Proposition \ref{product of diagonals}, $H$ is either $\Delta(G)$ or $G^2$ (there are only two partitions of the set $\{\sigma,\sigma'\}$).

		Observe that the following natural bijection is a homeomorphism
		\[
		\left(
		\Gamma_{\sigma}\bigcap\Gamma_{\sigma'}
		\right)
		\backslash G
		\rightarrow
		\left(
		\Delta(G)\bigcap\Gamma_{\sigma,\sigma'}
		\right)
		\backslash\Delta(G).
		\]
		On the other hand, since $\Gamma_{\sigma,\sigma'}$ is discrete in $G^2$, $\Delta(G)$ is open in $\Gamma_{\sigma,\sigma'}\Delta(G)$ and the following bijection is again a homeomorphism (because it is an open continuous map)
		\[
		\left(
		\Delta(G)\bigcap\Gamma_{\sigma,\sigma'}
		\right)
		\backslash\Delta(G)
		\rightarrow
		\Gamma_{\sigma,\sigma'}\backslash\Gamma_{\sigma,\sigma'}\Delta(G).
		\]
		In particular, $(\Gamma_{\sigma}\bigcap\Gamma_{\sigma'})\backslash G$ is compact if and only if
		$\Gamma_{\sigma,\sigma'}\backslash\Gamma_{\sigma,\sigma'}\Delta(G)$ is compact, if and only if
		$\Gamma_{\sigma,\sigma'}\backslash\Gamma_{\sigma,\sigma'}\Delta(G)$ is closed in 
		$\Gamma_{\sigma,\sigma'}\backslash G^2$.

		Now if $\Gamma_{\sigma}$ and $\Gamma_{\sigma'}$ are commensurable, then $(\Gamma_{\sigma}\bigcap\Gamma_{\sigma'})\backslash G$ is compact, so $\Gamma_{\sigma,\sigma'}\backslash\Gamma_{\sigma,\sigma'}\Delta(G)$ is closed in 
		$\Gamma_{\sigma,\sigma'}\backslash G^2$, thus $H=\Delta(G)$.
		If they are not commensurable, then $(\Gamma_{\sigma}\bigcap\Gamma_{\sigma'})\backslash G$ is not compact, so $\Gamma_{\sigma,\sigma'}\backslash\Gamma_{\sigma,\sigma'}\Delta(G)$ is not closed in $G^2$, therefore we must have $H=G^2$.
	\end{proof}

	\begin{proposition}\label{closure of diagonal map}
		The closure of $\Gamma\Delta^{\mathcal{T}}(G)$ in $G^{\mathcal{T}}$ is of the form $\Gamma H$ for $H$ a subgroup which is a product of diagonals $\prod_{i=1}^r\Delta^{\mathcal{T}_i}(G)$ for the partition
		$\Tcal=\bigsqcup_{i=1}^n\Tcal_i$.
		In particular, if each $\mathcal{T}_i$ contains only one element, then $\Gamma\Delta^{\mathcal{T}}(G)$ is dense in $G^{\mathcal{T}}$.
	\end{proposition}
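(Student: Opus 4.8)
The plan is to follow the argument of \cite[\S\S 3.6 and 3.7]{Cornut2002}: apply Ratner's theorem a single time to the full diagonal orbit, read off a product of diagonals via Proposition \ref{product of diagonals}, and then pin down which partition of $\mathcal{T}$ occurs by projecting onto pairs of coordinates and invoking the preceding lemma. Concretely, I would show that $\overline{\Gamma\Delta^{\mathcal{T}}(G)}=\Gamma\cdot\prod_{i=1}^{r}\Delta^{\mathcal{T}_i}(G)$, with $\mathcal{T}=\bigsqcup_{i}\mathcal{T}_i$ the commensurability partition fixed above; the density assertion is then the special case in which every $\mathcal{T}_i$ is a singleton, so that $\prod_i\Delta^{\mathcal{T}_i}(G)=G^{\mathcal{T}}$.

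First I note that, since $G$ is generated by one-parameter adjoint unipotent subgroups, so is $\Delta^{\mathcal{T}}(G)$ --- the diagonal image $t\mapsto(u(t),\dots,u(t))$ of such a subgroup $u$ of $G$ is again one-parameter adjoint unipotent in $G^{\mathcal{T}}$ --- and $\Gamma\backslash G^{\mathcal{T}}$ is compact because each $\Gamma_\sigma$ is cocompact. Applying Ratner's theorem (\cite[Theorem 2]{Ratner1995}) to the right $\Delta^{\mathcal{T}}(G)$-action at the point $x_0=\Gamma e$, the orbit closure $\overline{x_0\Delta^{\mathcal{T}}(G)}$ is homogeneous and carries a $\Delta^{\mathcal{T}}(G)$-invariant probability measure $\mu$; I set $H:=\mathrm{Stab}_{G^{\mathcal{T}}}(\mu)$, a closed subgroup with $\Delta^{\mathcal{T}}(G)\subseteq H$, so that $\overline{x_0\Delta^{\mathcal{T}}(G)}=\mathrm{supp}(\mu)=x_0H$ and hence $\overline{\Gamma\Delta^{\mathcal{T}}(G)}=\Gamma H$ after pulling back along $G^{\mathcal{T}}\to\Gamma\backslash G^{\mathcal{T}}$. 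As $x_0H$ is closed in the compact space $\Gamma\backslash G^{\mathcal{T}}$, the group $\Gamma\cap H$ is a cocompact lattice in $H$. Crucially, $H$ is normalized by $\Delta^{\mathcal{T}}(G)$: since $\mu$ is right-$\Delta^{\mathcal{T}}(G)$-invariant, one has $\mu\cdot\big(\Delta^{\mathcal{T}}(g)^{-1}h\Delta^{\mathcal{T}}(g)\big)=\mu$ for all $g\in G$ and $h\in H$, whence $\Delta^{\mathcal{T}}(g)^{-1}H\Delta^{\mathcal{T}}(g)=H$; this is exactly the point behind the phrase ``clearly $H$ is normalized by $\Delta^{\mathcal{T}}(G)$'' in the proof of the preceding lemma, and it is where the structural strength of Ratner's theorem enters.

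By Proposition \ref{product of diagonals}, $H=\prod_k\Delta^{\mathcal{S}_k}(G)$ for pairwise disjoint non-empty subsets $\mathcal{S}_k\subseteq\mathcal{T}$; since $\Delta^{\mathcal{T}}(G)\subseteq H$ and $G\ne\{1\}$, the $\mathcal{S}_k$ cover $\mathcal{T}$, so $\{\mathcal{S}_k\}$ is a partition of $\mathcal{T}$, and it remains to identify it with $\{\mathcal{T}_i\}$. Fix $\sigma,\sigma'\in\mathcal{T}$ and let $\mathrm{pr}\colon G^{\mathcal{T}}\to G^2$ be the projection onto these two coordinates. If $\sigma,\sigma'$ lie in a common block $\mathcal{S}_k$, then under the isomorphism $H\cong\prod_k G$ the lattice $\Gamma\cap H$ corresponds to $\prod_k\big(\bigcap_{\tau\in\mathcal{S}_k}\Gamma_\tau\big)$, whose cocompactness forces $\bigcap_{\tau\in\mathcal{S}_k}\Gamma_\tau$ to be cocompact in $G$; being contained in $\Gamma_\sigma\cap\Gamma_{\sigma'}$, this subgroup then has finite index in both $\Gamma_\sigma$ and $\Gamma_{\sigma'}$, so $\Gamma_\sigma$ and $\Gamma_{\sigma'}$ are commensurable. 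Conversely, if $\Gamma_\sigma$ and $\Gamma_{\sigma'}$ are commensurable, then by the preceding lemma $\Gamma_{\sigma,\sigma'}\Delta'(G)$ is closed and a proper subset of $G^2$; since $\mathrm{pr}\big(\Gamma\Delta^{\mathcal{T}}(G)\big)=\Gamma_{\sigma,\sigma'}\Delta'(G)$, continuity gives $\mathrm{pr}(\Gamma H)=\mathrm{pr}\big(\overline{\Gamma\Delta^{\mathcal{T}}(G)}\big)\subseteq\overline{\Gamma_{\sigma,\sigma'}\Delta'(G)}\ne G^2$, so the subgroup $\mathrm{pr}(H)$ is proper in $G^2$; but $\mathrm{pr}\big(\prod_k\Delta^{\mathcal{S}_k}(G)\big)$ equals $\Delta'(G)$ or $G^2$, hence it is $\Delta'(G)$, i.e.\ $\sigma$ and $\sigma'$ lie in a common block. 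Therefore $\{\mathcal{S}_k\}=\{\mathcal{T}_i\}$ and $\overline{\Gamma\Delta^{\mathcal{T}}(G)}=\Gamma H=\Gamma\cdot\prod_i\Delta^{\mathcal{T}_i}(G)$; when each $\mathcal{T}_i$ is a singleton this equals $\Gamma\cdot G^{\mathcal{T}}=G^{\mathcal{T}}$, so $\Gamma\Delta^{\mathcal{T}}(G)$ is dense.

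The hard part, and the only genuinely nontrivial step, is the normalization of $H$ by $\Delta^{\mathcal{T}}(G)$ in the second paragraph; the rest is bookkeeping combining Proposition \ref{product of diagonals} with the preceding lemma. One also has to verify that the hypotheses of \cite[Theorem 2]{Ratner1995} apply to $G^{\mathcal{T}}$ with its cocompact lattice $\Gamma$ and the subgroup $\Delta^{\mathcal{T}}(G)$ generated by one-parameter adjoint unipotent subgroups, which is the case.
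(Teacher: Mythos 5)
Your proposal is correct and follows essentially the same route as the paper: apply Ratner's theorem to get $\overline{\Gamma\Delta^{\mathcal{T}}(G)}=\Gamma H$ with $H$ a closed subgroup normalized by $\Delta^{\mathcal{T}}(G)$, invoke Proposition \ref{product of diagonals} to write $H$ as a product of diagonals over a partition of $\mathcal{T}$, and identify that partition with the commensurability partition via the pairwise projections $\mathrm{pr}_{\sigma}\times\mathrm{pr}_{\sigma'}$ and the preceding lemma. The only (harmless) variation is that for one implication you use cocompactness of $\Gamma\cap H$ in $H$ rather than the projection argument, and you spell out the normalization of $H$ that the paper leaves implicit.
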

	\begin{proof}
		As in the previous lemma, we know that the closure of $\Gamma\Delta^\Tcal(G)$ in $G^\Tcal$ is of the form $\Gamma H$ for some closed subgroup $H$ of $G^{\mathcal{T}}$ of the form
		$H=\prod_{i=1}^{r'}\Delta^{\mathcal{T}_i'}(G)$
		for some \textit{partition} of $\mathcal{T}$:
		\[
		\mathcal{T}=\bigsqcup_{i=1}^{r'}\mathcal{T}_i'.
		\]

		Now take any two elements $\sigma,\sigma'\in\mathcal{T}$ and consider the projection to the components of $\sigma$ and $\sigma'$:
		\[
		\mathrm{pr}_{\sigma}
		\times
		\mathrm{pr}_{\sigma'}
		\colon
		G^{\mathcal{T}}
		\rightarrow
		G^2
		\]
		Clearly the image of $\Gamma H$ under this projection map is equal to the closure of $\Gamma_{\sigma,\sigma'}\Delta(G)$. It follows immediately from the previous lemma that $\sigma,\sigma'\in\mathcal{T}_i$ if and only if the closure of $\Gamma_{\sigma,\sigma'}\Delta(G)$ is \textit{not} equal to $G^2$, if and only if $\Gamma_{\sigma}$ and $\Gamma_{\sigma'}$ are commensurable, if and only if $\sigma,\sigma'\in\mathcal{T}_{i'}$ for some $i'=1,\cdots,r$. Therefore each $\mathcal{T}_i'$ is contained in some $\mathcal{T}_{i'}$. So these two partitions $\mathcal{T}=\bigsqcup_{i=1}^r\mathcal{T}_i$ and $\mathcal{T}=\bigsqcup_{i=1}^{r'}\mathcal{T}_i'$ are the same, which finishes the proof.
	\end{proof}

	We write the quotient group
	\[
	\bfPSp_L(\mathbb{Q}_p)
	=
	\bfSp_L(\mathbb{Q}_p)
	/
	Z(\bfSp_L(\mathbb{Q}_p)),
	\]
	where $Z(\bfSp_L(\mathbb{Q}_p))=Z_{\bfSp_L}(\mathbb{Q}_p)=\{\pm1_{2n}\}$
	is the center of $\bfSp_L(\mathbb{Q}_p)$. Similarly, for a subgroup $H$ of
	$\bfSp_L(\mathbb{Q}_p)$, we write $\mathbf{P}H$	for the image of $H$ under the projection map $\bfSp_L(\mathbb{Q}_p)\rightarrow	\bfPSp_L(\mathbb{Q}_p)$.

	\begin{proof}[Proof of Theorem \ref{surjectivity of Pi_R}]
		Since each $\Gamma_p^1(\ell)_\sigma$ contains $\pm1$, we have a natural bijection
		\[
		\Gamma_p^1(\ell)_{\sigma}\backslash\bfSp_L(\mathbb{Q}_p)
		\simeq
		\mathbf{P}\Gamma_p^1(\ell)_{\sigma}\backslash\bfPSp_L(\mathbb{Q}_p).
		\]
		So it suffices to prove the theorem for $\bfPSp_L(\mathbb{Q}_p)$ instead of $\bfSp_L(\mathbb{Q}_p)$.
		Note that $\mathbf{P}\bfSp_L(\mathbb{Q}_p)$
		is a simple and non-commutative group and it is generated by one-parameter adjoint unipotent subgroups
		(in the sense of \cite{Ratner1995}). Moreover, each $\Gamma_p^1(\ell)_{\sigma}$ is discrete and cocompact
		in $\bfSp_L(\mathbb{Q}_p)$.	Write
		\[
		\Gamma=
		\prod_{\ell\in\mathcal{S},\sigma\in\mathcal{T}_{\ell}}\Gamma_p^1(\ell)_{\sigma},
		\quad
		\mathbf{P}\Gamma
		=
		\prod_{\ell\in\mathcal{S},\sigma\in\mathcal{T}_{\ell}}
		\mathbf{P}\Gamma_p^1(\ell)_{\sigma}.
		\]
		Using Proposition \ref{closure of diagonal map}, we know that the closure of the image of the following simultaneous projection map
		\[
		\bfPSp_L(\mathbb{Q}_p)
		\rightarrow
		\prod_{\ell\in\mathcal{S},\sigma\in\mathcal{T}_{\ell}}
		\mathbf{P}\Gamma_p^1(\ell)_{\sigma}
		\backslash
		\bfPSp_L(\mathbb{Q}_p)
		=
		\mathbf{P}\Gamma
		\backslash
		\prod_{\ell\in\mathcal{S},\sigma\in\mathcal{T}_{\ell}}
		\bfPSp_L(\mathbb{Q}_p)
		\]
		is of the form
		$\mathbf{P}\Gamma\backslash\mathbf{P}\Gamma H$ for a closed subgroup $H$ of $\prod_{\ell\in\mathcal{S},\sigma\in\mathcal{T}_{\ell}}\bfPSp_L(\mathbb{Q}_p)$ which is a product of diagonals by Theorems \ref{commensurable} and \ref{non-commensurable for different primes}
		\[
		H
		=
		\prod_{\ell\in\mathcal{S}}
		\prod_{i=1}^{r(\ell)}
		\Delta^{\mathcal{T}_{\ell,i}}(\bfPSp_L(\mathbb{Q}_p)).
		\]
		From this one deduces easily the theorem.
	\end{proof}

	\begin{proof}[Proof of Theorem \ref{surjectivity of Pi_R^N}]
		We prove the theorem assuming that each $\mathcal{T}_{\ell,i}$ contains only one element. The general case is a combination of this simplified case and the preceding proof.

		Choose a finite set $\mathcal{N}$ of primes $q\nmid N$ such that for any distinct $\sigma_1,\sigma_2\in\mathcal{T}_\ell$ for some $\ell\in\mathcal{S}$, there is $q\in\mathcal{N}$ so that the $q$-th component $(\sigma_1^{-1}\sigma_2)_q$ of $\sigma_1^{-1}\sigma_2$ does \emph{not} lie in $Z_{\bfG_{\uA_0}}(\mathbb{Q})$.
		To prove the theorem, it suffices to prove that for any such $\mathcal{N}$, the image of the following simultaneous \emph{projection} map is dense
		\begin{equation}\label{PSp_L(A_N) surjects onto simultaneous projection}
			\bfPSp_L(\A_{(\Ncal)})
			\to
			\prod_{\ell\in\mathcal{S},\sigma\in\mathcal{T}_{\ell}}
			\mathbf{P}\Gamma_{\mathcal{N}}^1(\ell)_\sigma
			\backslash
			\bfPSp_L(\A_{(\Ncal)}).
		\end{equation}
		Here $\Gamma_{\mathcal{N}}^1(\ell)_\sigma
		=\sigma^{-1}\bfG_{\uA_{0}^{(\ell)}}^1(\Z_{(\mathcal{N})})\sigma$.
		Note that we have the following commutative diagram (here $p\in\mathcal{N}$)
		\[
		\begin{tikzcd}
			\bfPSp_L(\mathbb{Q}_p)
			\arrow[r]
			\arrow[d,hookrightarrow]
			&
			\prod_{\ell\in\mathcal{S},\sigma\in\mathcal{T}_{\ell}}
			\mathbf{P}\Gamma_p^1(\ell)_\sigma
			\backslash
			\bfPSp_L(\mathbb{Q}_p)
			\arrow[d,hookrightarrow]
			\\
			\bfPSp_L(\A_{(\Ncal)})
			\arrow[r]
			\arrow[d,hookrightarrow]
			&
			\prod_{\ell\in\mathcal{S},\sigma\in\mathcal{T}_{\ell}}
			\mathbf{P}\Gamma_{\mathcal{N}}^1(\ell)_\sigma
			\backslash
			\bfPSp_L(\A_{(\Ncal)})
			\arrow[d,hookrightarrow]
			\\
			\bfPSp_L(\A^{(N)}_f)
			\arrow[r]
			&
			\prod_{\ell\in\mathcal{S},\sigma\in\mathcal{T}_{\ell}}
			\mathbf{P}\Gamma^{(N),1}(\ell)_\sigma
			\backslash
			\bfPSp_L(\A^{(N)}_f)
		\end{tikzcd}
		\]

		Write $\mathbf{P}\Gamma
		:=\prod_{\ell\in\mathcal{S},\sigma\in\mathcal{T}_{\ell}}
		\mathbf{P}\Gamma_{\mathcal{N}}^1(\ell)_\sigma$.
		Again, by \cite{Ratner1995}, we know that the closure is of the form
		$\mathbf{P}\Gamma\backslash\mathbf{P}\Gamma H$ for some closed subgroup $H$ of $\prod_{\ell\in\mathcal{S},\sigma\in\mathcal{T}_{\ell}}\bfPSp_L(\A_{(\Ncal)})$.

		For two distinct pairs $(\ell_1,\sigma_1)$ and $(\ell_2,\sigma_2)$, by assumption on $\mathcal{N}$ and Theorem \ref{surjectivity of Pi_R}, there is a prime $p\in\mathcal{N}$ such that the image of the following projection map is dense
		\[
		\bfPSp_L(\mathbb{Q}_p)
		\rightarrow
		\mathbf{P}\Gamma_p^1(\ell_1)_{\sigma_1}
		\backslash
		\bfPSp_L(\mathbb{Q}_p)
		\times
		\mathbf{P}\Gamma_p^1(\ell_2)_{\sigma_2}
		\backslash
		\bfPSp_L(\mathbb{Q}_p).
		\]
		On the other hand, the strong approximation for $\bfSp_L$ implies that $\bfPSp_L(\mathbb{Q}_p)$ is dense in the quotient $\mathbf{P}\Gamma^{(N),1}(\ell)_{\sigma}
		\backslash\bfPSp_L(\A^{(N)}_f)$, in particular it is dense in $\mathbf{P}\Gamma_{\mathcal{N}}^1(\ell)_\sigma
		\backslash
		\bfPSp_L(\A_{(\Ncal)})$. Thus the closed subgroup $H$ contains the subgroup of
		$\prod_{\ell\in\mathcal{S},\sigma\in\mathcal{T}_{\ell}}\bfPSp_L(\A_{(\Ncal)})$ consisting of $(g_{\ell,\sigma})_{\ell\in\mathcal{S},\sigma\in\mathcal{T}_{\ell}}$ such that $g_{\ell,\sigma}=1$ for any $(\ell,\sigma)\neq(\ell_1,\sigma_1),(\ell_2,\sigma_2)$. It follows immediately that $H$ is equal to
		$\prod_{\ell\in\mathcal{S},\sigma\in\mathcal{T}_{\ell}}\bfPSp_L(\A_{(\Ncal)})$. This proves that the image of the map (\ref{PSp_L(A_N) surjects onto simultaneous projection}) is dense.
	\end{proof}

	\subsection{$G$ is not simple}\label{G not simple}
	For later use in §\ref{Shimura varieties of Hodge type}, we also need the case $G$ not simple, but rather a product of simple groups.
	We write in this subsection
	\[
	G=\prod_{j=1}^tG^{(j)},
	\]
	with each $G^{(j)}$ simple and non-commutative.

	\begin{proposition}\label{H is a product of diagonals}
		A subgroup $H$ of $G^\Tcal$ is normalized by $\Delta^\Tcal(G)$ if and only if it is of the form
		\[
		H=\prod_{j=1}^t\prod_{i=1}^{r_j}\Delta^{\Tcal_i^{(j)}}(G^{(j)}),
		\]
		where for each $j=1,\cdots,t$, $\Tcal_1^{(j)},\cdots,\Tcal_{r_j}^{(j)}$ are disjoint non-empty subsets of $\Tcal$.
	\end{proposition}
	\begin{proof}
		For $j=1,\cdots,t$, we write $H^{(j)}=H\bigcap(G^{(j)})^\Tcal$, a subgroup of $(G^{(j)})^\Tcal$. Since $H$ is normalized by $\Delta^\Tcal(G)$, $H^{(j)}$ is normalized by $\Delta^\Tcal(G^{(j)})$. Thus by Proposition \ref{product of diagonals}, we know that $H^{(j)}$ is a product of diagonals in $(G^{(j)})^\Tcal$:
		\[
		H^{(j)}=\prod_{i=1}^{r_j}\Delta^{\Tcal_i^{(j)}}(G^{(j)}).
		\]
		Here $\Tcal_1^{(j)},\cdots,\Tcal_{r_j}^{(j)}$ are disjoint non-empty subsets of $\Tcal$.
		We set $\widetilde{H}=\prod_{j=1}^tH^{(j)}$ and for any $h\in \widetilde{H}$, we write it as
		$h=h_1\cdots h_t$ with $h_j\in(G^{(j)})^\Tcal$. We fix $j=1,\cdots,t$. Then for any $g_j\in\Delta^\Tcal(G^{(j)})$, by assumption, we have
		\[
		g_jhg_j^{-1}
		=
		h_1\cdots h_{j-1}(g_jh_jg_j^{-1})h_{j+1}\cdots h_t\in H.
		\]
		Thus $g_jhg_j^{-1}h^{-1}=g_jh_jg_j^{-1}h_j^{-1}\in H\bigcap(G^{(j)})^\Tcal=H^{(j)}$ for any $g_j\in\Delta^\Tcal(G^{(j)})$. We claim that $h_j\in H^{(j)}$, which then implies $H=\widetilde{H}$ and thus finishes the proof.
		
		Now we prove the claim. We write $h_j=(h_{j,\sigma})_{\sigma\in\Tcal}$ with $h_{j,\sigma}\in G^{(j)}$ and consider the following two cases:
		\begin{enumerate}
			\item 
			Fix $\sigma,\sigma'\in\Tcal_i^{(j)}$, then for any $g\in G^{(j)}$, we have
			\[
			gh_{j,\sigma}g^{-1}h_{j,\sigma}^{-1}
			=
			gh_{j,\sigma'}g^{-1}h_{j,\sigma'}^{-1}.
			\]
			In particular, $h_{j,\sigma}^{-1}h_{j,\sigma'}$ commutes with any $g\in G^{(j)}$. Since $G^{(j)}$ is simple, we must have
			\[
			h_{j,\sigma}=h_{j,\sigma'}.
			\]

			\item 
			Fix $\sigma\notin\bigsqcup_{i=1}^{r_j}\Tcal_i^{(j)}$, then for any $g\in G^{(j)}$, we have
			\[
			gh_{j,\sigma}g^{-1}h_{j,\sigma}^{-1}=1.
			\]
			Thus we have
			\[
			h_{j,\sigma}=1.
			\]
		\end{enumerate}
		From the above two cases, we deduce that $h_j=(h_{j,\sigma})_{\sigma\in\Tcal}$ lies in $H^{(j)}=\prod_{i=1}^{r_j}\Delta^{\Tcal_i^{(j)}}(G^{(j)})$.		
	\end{proof}

	As in Lemma \ref{Gamma_sigma and Gamma_sigma' comm or not}, we have
	\begin{lemma}
		Take two elements $\sigma,\sigma'\in\Tcal$. Then $\Gamma_{\sigma,\sigma'}\Delta(G)$ is closed in $G^2$ if and only if $\Gamma_\sigma$ and $\Gamma_{\sigma'}$ are commensurable.
	\end{lemma}
	\begin{proof}
		The proof is exactly the same as in Lemma \ref{Gamma_sigma and Gamma_sigma' comm or not}, except that now we do not know whether the closure of $\Gamma_{\sigma,\sigma'}\Delta(G)$ is equal to $G^2$ in the case $\Gamma_{\sigma}$ and $\Gamma_{\sigma'}$ not commensurable.
	\end{proof}

	\begin{corollary}\label{Gamma_sigma and Gamma_sigma' comm or not, not simple}
		Suppose that for any $j=1,\cdots,t$, $\Gamma G^{(j)}$ is dense in $G$. Then the closure of $\Gamma_{\sigma,\sigma'}\Delta(G)$ in $G^2$ is
		\[
		\begin{cases*}
			\Gamma_{\sigma,\sigma'}\Delta(G),
			&
			if $\Gamma_{\sigma}$ and $\Gamma_{\sigma'}$ are commensurable;
			\\
			G^2,
			&
			if $\Gamma_{\sigma}$ and $\Gamma_{\sigma'}$ are not commensurable.
		\end{cases*}
		\]
	\end{corollary}
	\begin{proof}
		It suffices to consider the case $\Gamma_{\sigma}$ and $\Gamma_{\sigma'}$ not commensurable.
		Since $\Gamma G^{(j)}$ is dense in $G$, so is $\Gamma_{\sigma}G^{(j)}$ and $\Gamma_{\sigma'}G^{(j)}$.
		By the proof of Lemma \ref{Gamma_sigma and Gamma_sigma' comm or not}, the closure of $\Gamma_{\sigma,\sigma'}\Delta(G)$ in $G^2$ is of the form $\Gamma_{\sigma,\sigma'}H$ for a closed subgroup $H$ of $G^2$ containing $\Delta(G)$ and normalized by $\Delta(G)$. Thus by Proposition \ref{H is a product of diagonals}, $H$ is of the form
		\[
		H=\prod_{j=1}^t\prod_{i=1}^{r_j}\Delta^{\Tcal_i^{(j)}}(G^{(j)}).
		\]
		Here $\Tcal=\{\sigma,\sigma'\}$. We know that $H\supset\Delta(G)$, thus $\Tcal=\bigsqcup_{i=1}^{r_j}\Tcal_i^{(j)}$ for each $j$. Since $\Gamma_{\sigma}$ and $\Gamma_{\sigma'}$ are not commensurable, $H\neq\Delta(G)$. Thus there is some $j_0=1,\cdots,t$ such that $r_j=2$. In other words, $H$ contains $(G^{(j)})^{\Tcal}$. However, by assumption, $\Gamma_{\sigma,\sigma'}(G^{(j)})^{\Tcal}$ is dense in $G^2$, so we deduce that $H=G^2$, which finishes the proof.
	\end{proof}

	Then just as in Proposition \ref{closure of diagonal map}, we have
	\begin{proposition}\label{closure of diagonal map, not simple}
		Suppose that for any $j=1,\cdots,t$, $\Gamma G^{(j)}$ is dense in $G$.
		Then the closure of $\Gamma\Delta^\Tcal(G)$ in $G^\Tcal$ is of the form $\Gamma H$ for a closed subgroup $H$ of $G$ given by
		\[
		H=\prod_{i=1}^r\Delta^{\Tcal_i}(G)
		\]
		for the partition $\Tcal=\bigsqcup_{i=1}^r\Tcal_i$ as in (\ref{partition of T}).

		In particular, if each $\Tcal_i$ contains only one element, then $\Gamma\Delta^{\Tcal}(G)$ is dense in $G^\Tcal$.
	\end{proposition}
	\begin{proof}
		The proof is the same as Proposition \ref{closure of diagonal map}, taking into account of Corollary \ref{Gamma_sigma and Gamma_sigma' comm or not, not simple}.
	\end{proof}

	\section{Application to Shimura varieties of Hodge type}\label{Shimura varieties of Hodge type}
	In this section, we will prove similar results for Shimura varieties of Hodge type as in previous sections, with appropriate modifications of the arguments.

	We first fix some notations, following closely
	\cite{Kisin2010,Kisin2017,Wortmann2013}.
	We fix a Shimura datum of Hodge type $(G,X)$ where $G$ is a connected reductive group over $\Q$. Then we fix a symplectic embedding
	\[
	(G,X)\hookrightarrow(\bfGSp_{L,\Q},S^\pm),
	\]
	where $S^\pm$ is the Siegel upper half space of rank equal to $\mathrm{rk}(L)$. We can and will assume $\mathrm{rk}(L)\ge4$ in the following.
	We fix a compact open subgroup $U$ of $G(\A_f)$ and write $ Sh_{U,\C}= Sh_{U,\C}(G,X)$ for the Shimura variety attached to the triple $(G,X,U)$ whose $\C$-points are given by
	\begin{equation}\label{S_K(C) and double coset}
		Sh_{U,\C}(\C)
		=
		G(\Q)\backslash(X\times G(\A_f)/U),
	\end{equation}
	which has a canonical model defined over the reflex field $F=F(G,X)$. We denote this canonical model by $ Sh_{U,F}$. Similarly, for a compact open subgroup $\Ubf$ of $\bfGSp_L(\A_f)$, we have a canonical model $ Sh_{\Ubf,\Q}$ over $\Q$. Suppose that $U$ is contained in $\Ubf$, then we have a morphism of Shimura varieties
	\[
	Sh_{U,F}\to Sh_{\Ubf,F}= Sh_{\Ubf,\Q}\times_\Q E.
	\]

	For a place $v$ of $F$ over a rational prime $\ell$ such that $G$ is unramified at $\ell$ and $U=U^\ell U_\ell$ with $U_\ell$ hyperspecial at $\ell$, Kisin (\cite{Kisin2010}) constructed an integral canonical model of $ Sh_{U,F}$ over $\Ocal_{(v)}$, the localization of $\Ocal_F$ at $v$. We denote this integral model by $ Sh_{U,\Ocal_{(v)}}$. We assume in the following that $\Ubf=\Ubf^\ell\Ubf_\ell$ with $\Ubf_\ell$ also hyperspecial at $\ell$.
	We write $\Acal_{\Ubf,\Ocal_{(v)}}\to Sh_{\Ubf,\Ocal_{(v)}}$ for the universal abelian scheme over $ Sh_{\Ubf,\Ocal_{(v)}}$ and denote by
	\[
	\Acal_{K,\Ocal_{(v)}}\to Sh_{U,\Ocal_{(v)}}
	\]
	its pull-back along $ Sh_{U,\Ocal_{(v)}}\to Sh_{\Ubf,\Ocal_{(v)}}$. We fix a reductive model $\Gcal$ of $G$ over $\Z_{(\ell)}$ such that $U_\ell=\Gcal(\Z_\ell)$. Then there is a finite set of tensors $s^{(\ell)}=(s_i^{(\ell)})_i\subset L^\otimes_{\Z_{(\ell)}}$ such that $\Gcal$ is identified with the stabilizer in $\GL(L_{\Z_{(\ell)}})$ of these tensors $s^{(\ell)}$ via our symplectic embedding $G\hookrightarrow\bfGSp_{L,\Q}$ (\cite[§2.3.1 and §2.3.2]{Kisin2010}).\footnote{Here $L_{\Z_{(\ell)}}^\otimes$ is the direct sum of all $\Z_{(\ell)}$-modules arising from $L_{\Z_{(\ell)}}$ by taking a finite number of the following operations: duals, tensor products, symmetric powers and exterior powers.}

	For each point $x\in Sh_{U,\Ocal_{(v)}}(\overline{\Q})$, Kisin constructed tensors $s^{(\ell)}_{\ell,x}=(s^{(\ell)}_{i,\ell,x})_i\subset(H_\etrm^1(\Acal_x,\Z_\ell)\otimes_{\Z_\ell}B_\crys)^\otimes$ (\cite[§1.3.6]{Kisin2017}). Here $\Acal_x$ is the specialization of $\Acal_{K,\Ocal_{(v)}}$ to the point $x$. For each prime $\ell'\neq\ell$, Kisin also constructed tensors $s^{(\ell)}_{\ell',x}=(s^{(\ell)}_{i,\ell',x})\subset(H^1_\etrm(\Acal_x,\Q_{\ell'}))^\otimes$ (\emph{loc.cit}).

	Similarly, for each point $x\in Sh_{U,\Ocal_{(v)}}(\overline{\F}_\ell)$, Kisin constructed tensors $s^{(\ell)}_{0,x}=(s^{(\ell)}_{i,0,x})_i\subset(H^1_\crys(\Acal_x/W(\overline{\F}_\ell))\otimes_{W(\overline{\F}_\ell)}B_\crys)^\otimes$ (\cite[§1.3.10]{Kisin2017}). For each prime $\ell'\neq\ell$, there are tensors $s^{(\ell)}_{\ell',x}=(s^{(\ell)}_{i,\ell',x})_i\subset H^1_\etrm(\Acal_x,\Q_{\ell'})^\otimes$ (\cite[§1.3.6]{Kisin2017}). Similarly, we have tensors $s^{(\ell)}_{\ell,x}=(s^{(\ell)}_{i,\ell,x})$ (\emph{loc.cit}). We have a natural isomorphism of $W(\overline{\F}_\ell)$, resp. $\Q_{\ell'}$-modules
	\begin{equation}\label{relation between Hodge tensors}
		L_{W(\overline{\F}_\ell)}\simeq
		H^1_\crys(\Acal_x/W(\overline{\F}_\ell)),\,
		\text{resp. }
		L_{\Q_{\ell'}}\simeq
		H^1_\etrm(\Acal,\Q_{\ell'}),
	\end{equation}
	taking the tensors $s^{(\ell)}_i$ to $s^{(\ell)}_{i,0,x}$, resp. $s^{(\ell)}_{i,\ell',x}$ for all $i$. These tensors for $x\in Sh_{U,\Ocal_{(v)}}(\overline{\F}_\ell)$ are constructed by considering a lifting $\widetilde{x}\in Sh_{U,\Ocal_{(v)}}(\overline{\Q})$ of $x$ and using $\ell'$-adic comparison theorems (\cite[Proposition 1.3.7]{Kisin2017}).

	For $x$ either in $ Sh_{U,\Ocal_{(v)}}(\overline{\Q})$ or in $ Sh_{U,\Ocal_{(v)}}(\overline{\F}_\ell)$, we define an algebraic subgroup $I_x$ of $\Aut_\Q(\Acal_x)$, consisting of elements fixing the tensors $s^{(\ell)}_{i,0,x}$ and $s^{(\ell)}_{i,\ell',x}$ for any $\ell'\neq\ell$.

	\begin{lemma}\label{I_x(Q_p)=G(Q_p) for supersingular}
		Let $x\in Sh_{U,\Ocal_{(v)}}(\overline{\F}_\ell)$. If $\Acal_x$ is a supersingular abelian variety, then for any prime $\ell'\neq\ell$, we have an isomorphism of algebraic groups over $\Q_{\ell'}$
		\[
		I_{x/\Q_{\ell'}}:=I_x\times_\Q\Q_{\ell'}\simeq G_{\Q_{\ell'}}.
		\]
	\end{lemma}
	\begin{proof}
		By definition, for any $\Q_{\ell'}$-algebra $R$, $I_{x/\Q_{\ell'}}(R)$ consists of elements in $\Aut_{\Q_{\ell'}}(\Acal_x)(R)$ fixing the tensors $s^{(\ell)}_{i,0,x}$ and $s^{(\ell)}_{i,\ell',x}$. By the assumption that $\mathrm{rk}(L)\ge4$, there exists a supersingular elliptic curve $E(\ell)$ over $\overline{\F}_\ell$ such that $\Acal_x$ is isogenous to $E(\ell)^{\mathrm{rk}(L)/2}$ (by the theorem of Deligne-Ogus-Shioda). So in particular, we have an isomorphism of algebraic groups over $\Q$
		\[
		\Aut_\Q(\Acal_x)
		\simeq
		\GL_{\mathrm{rk}(L)/2}(B(\ell)),
		\]
		where $B(\ell)=\End_{\overline{\F}_\ell}(E(\ell))^\circ$. Since $B(\ell)$ is unramified at $\ell'\neq\ell$, we have an isomorphism $B(\ell)\otimes_\Q\Q_{\ell'}\simeq\mathrm{M}_2(\Q_{\ell'})$ and thus
		\[
		\Aut_{\Q_{\ell'}}(\Acal_x)
		\simeq
		\GL_{\mathrm{rk}(L)/2}(B(\ell)\otimes\Q_{\ell'})
		\simeq
		\GL_{\mathrm{rk}(L)}(\Q_{\ell'}).
		\]

		Since we have the symplectic embedding $G\hookrightarrow\bfGSp_{L,\Q}$, without loss of generality, we can assume that one of tensors, say, $s^{(\ell)}_{i_0}$, corresponds to the symplectic pairing $\langle-,-\rangle$ on $L_{\Z_{(\ell)}}$. So using the isomorphisms in (\ref{relation between Hodge tensors}), the stabilizer in $\Aut_{\Q_{\ell'}}(\Acal_x)$ of the tensor $s^{(\ell)}_{i_0,\ell',x}$ is exactly $\bfGSp_{L/\Q_{\ell'}}$. It follows that the stabilizer in $\Aut_{\Q_{\ell'}}(\Acal_x)$ of all the tensors $(s^{(\ell)}_{i,\ell',x})_i$ is exactly the group $G_{\Q_{\ell'}}$. Thus we have $I_{x/\Q_{\ell'}}\simeq G_{\Q_{\ell'}}$.
	\end{proof}
	We deduce that $I_x$ is a form of $G$. Moreover, if $\Acal_x$ is supersingular, then $I_x(\R)$ is a closed subgroup of $\Gbf_{\Acal_x}(\R)$, which is compact, so it follows that $I_x(\R)$ is also compact.

	As in Definition \ref{p-adic Hecke orbit}, for a prime number $p$ such that $U=U^pU_p$ and a point $x\in Sh_{U,\Ocal_{(v)}}(K)$ where $K$ is an algebraically closed field, we write
	\[
	\Hcal_p^G(\Acal_x)
	\]
	for the set of points $x'\in Sh_{U,\Ocal_{(v)}}(K)$ such that there is an isogeny $\phi\colon\Acal_x\to\Acal_{x'}$ of degree $p^r$ for some $r\in\Z$ sending the tensors $s^{(\ell)}_{0,x}$, resp. $s^{(\ell)}_{\ell',x}$ to the tensors $s^{(\ell)}_{0,x'}$, resp. $s^{(\ell)}_{\ell',x'}$ (for any $\ell'$). For $x\in Sh_{U,\Ocal_{(v)}}(K)$ with $K=\overline{\Q}$ or $\overline{\F}_\ell$, by (\ref{p-adic Hecke orbit of A, char=0}), (\ref{p-adic Hecke orbit of A, char=ell}) and \cite[Corollary 1.4.2 \& Propositions 2.1.3, 2.1.5]{Kisin2017}, we have a natural bijection
	\[
	\Theta_x\colon
	\Hcal_p^G(\Acal_x)
	\simeq
	I_x(\Q)\backslash G(\Q_p)U/U.
	\]
	Note that the right hand side is in natural bijection with $(I_x(\Q)\bigcap U^p)\backslash G(\Q_p)/U_p$.

	For $K=\overline{\Q}$, the point $x\in Sh_{U,\Ocal_{(v)}}(K)\subset Sh_{U,\Ocal_{(v)}}(\C)$ corresponds to an element $(h_x,g_x)\in X\times G(\A_f)$ under the identification (\ref{S_K(C) and double coset}). Then for any $g\in G(\Q_p)$, $\Theta_x^{-1}(g)\in Sh_{U,\Ocal_{(v)}}(K)\subset Sh_{U,\Ocal_{(v)}}(\C)$ corresponds to the element $(h_x,g_xg)\in X\times G(\A_f)$ under (\ref{S_K(C) and double coset}). Moreover, for $x\in Sh_{U,\Ocal_{(v)}}(\overline{\F}_\ell)$ and a lifting $\widetilde{x}\in Sh_{U,\Ocal_{(v)}}(\overline{\Q})$ of $x$, we have the following commutative diagram (\cite[Corollary 1.4.12]{Kisin2017})
	\begin{equation*}
		\begin{tikzcd}
			I_{\widetilde{x}}(\Q)\backslash G(\Q_p)U/U
			\arrow[d]
			&
			\Hcal_p^G(\Acal_{\widetilde{x}})
			\arrow[l,"\Theta_{\widetilde{x}}"',"\simeq"]
			\arrow[r,hookrightarrow]
			\arrow[d,"\Rbf_\ell"]
			&
			Sh_{U,\Ocal_{(v)}}(\Ocal_{\overline{\Q}})
			\arrow[d,"\Rbf_\ell"]
			\\
			I_x(\Q)\backslash G(\Q_p)U/U
			&
			\Hcal_p^G(\Acal_x)
			\arrow[l,"\Theta_x"',"\simeq"]
			\arrow[r,hookrightarrow]
			&
			Sh_{U,\Ocal_{(v)}}(\overline{\F}_\ell)
		\end{tikzcd}
	\end{equation*}
	The left vertical arrow is the natural projection map induced by the inclusion $I_{\widetilde{x}}(\Q)\hookrightarrow I_x(\Q)$.

	We consider a point $x\in Sh_{U,\Ocal_{(v)}}(\overline{\F}_\ell)$ with $\Acal_x$ supersingular, $U_p=I_x(\Z_p)$. We write $I_x^1$ for the derived subgroup of $I_x$ and $\proj\colon I_x\to I_x/I_x^1=:I_x^\ab$ for the projection map, where $I_x^\ab$ is a torus over $\Q$. We assume
	\begin{equation}\label{condition on similitude of I_x}
		\proj(I_x(\Q_p))
		=
		\proj(I_x(\Z_p))\proj(I_x(\Q)\bigcap U^p).
	\end{equation}
	Then the inclusion $I_x^1\hookrightarrow I_x$ induces a bijection
	\begin{equation}\label{I_x and I_x^1}
		(I_x^1(\Q)\bigcap U^p)\backslash I_x^1(\Q_p)/I_x^1(\Z_p)
		\simeq
		(I_x(\Q)\bigcap U^p)\backslash I_x(\Q_p)/I_x(\Z_p).
	\end{equation}
	Since $I_x^1(\Z_p)$ contains the center of $I_x^1(\Q_p)$, we have
	\begin{equation}\label{I_x^1 and PI_x^1}
		(I_x^1(\Q)\bigcap U^p)\backslash\Pbf I_x^1(\Q_p)/\Pbf I_x^1(\Z_p)
		\simeq
		(I_x^1(\Q)\bigcap U^p)\backslash I_x^1(\Q_p)/I_x^1(\Z_p).
	\end{equation}
    We write $I_x^{1,\ad}$ for the adjoint quotient of $I_x^1$. Then $I_x^{1,\ad}$ is the derived subgroup of $I_x^\ad$. By \cite{Humphreys1969}, we know that
	\begin{equation*}
		\Aut(I_x^{\ad}(\Q))=I_x^{\ad}(\Q)/Z_{I_x^{\ad}}(\Q)\rtimes\Aut_\Q(\Dfrak_x^{\ad}),
	\end{equation*}
	where $\Dfrak_x^{\ad}$ is the Dynkin diagram of $I_x^{\ad}$ and $\Aut_\Q(\Dfrak_x^{\ad})$ is the group of automorphisms over $\Q$ of $\Dfrak_x^{1,\ad}$.

	Now we fix a point $x_0\in Sh_{U,F}(\overline{\Q})$ and a prime $p$ such that $U=U^pU_p$ with $U_p$ hyperspecial and $G$ is split at $p$. We then fix a non-empty finite set $\Scal$ of primes $\lfrak$ of $F$ not over $p$ (as well as a place $v(\lfrak)$ of $\overline{\Q}$ over $\lfrak$) such that $x_0$ has \emph{supersingular} good reduction at $v(\lfrak)$ for all $\lfrak\in\Scal$. For each $\lfrak\in\Scal$, we also fix a non-empty finite subset $\Tcal_\lfrak$ of $Z_{I_{x_0}}(\A_f)$ and write
	\[
	\Tcal=\bigsqcup_{\lfrak\in\Scal}\Tcal_\lfrak.
	\]
	Moreover, for each $\lfrak\in\Scal$, we write $x_0^{(\lfrak)}$ for the reduction modulo $v(\lfrak)$ of $x_0$.
	As in §\ref{p-adic Hecke orbit of A}, we write ($\lfrak\in\Scal$, $\sigma\in\Tcal_\lfrak$)
	\begin{align*}
		\Gamma_p(\lfrak)
		&
		:=I_{x_0^{(\lfrak)}}(\Q)\bigcap U^p,
		\quad
		\Gamma_p(\lfrak)_\sigma:=\sigma^{-1}\Gamma_p(\lfrak)\sigma,
		\\
		\Gamma_p^1(\lfrak)
		&
		:=\Gamma_p(\lfrak)\bigcap I_{x_0^{(\lfrak)}}^1(\Q),
		\quad
		\Gamma_p^1(\lfrak)_\sigma:=\sigma^{-1}\Gamma_p^1(\lfrak)\sigma.
	\end{align*}	
	Then we have a partition
	\[
	\Tcal_\lfrak=\bigsqcup_{i=1}^{r(\lfrak)}\Tcal_{\lfrak,i},
	\]
	such that $\sigma,\sigma'\in\Tcal_{\lfrak,i}$ if and only if
	$\Gamma_p(\lfrak)_\sigma$ and $\Gamma_p(\lfrak)_{\sigma'}$ are commensurable. The same argument as in the proof of Theorem \ref{commensurable} gives
	\begin{theorem}\label{commensurable-Hodge case}
		For two elements $\sigma_1,\sigma_2\in\Tcal_\lfrak$, $(\sigma_1\sigma_2^{-1})_p\notin Z_{I_{x_0^{(\lfrak)}}(\Q_p)}I_{x_0^{(\lfrak)}}(\Q)$ if and only if $\Gamma_p(\lfrak)_{\sigma_1}$ and $\Gamma_p(\lfrak)_{\sigma_2}$ are not commensurable.
	\end{theorem}

	We have the following simultaneous reduction map of the $p$-adic Hecke orbit of $\Acal_{x_0}$:
	\[
	\Rbf_{\Scal,\Tcal}^G
	\colon
	\Hcal_p^G(\Acal_{x_0})
	\to
	\prod_{\lfrak\in\Scal}\prod_{\sigma\in\Tcal_\lfrak}\Hcal_p^G(\Acal_{x_0^{(\lfrak)}}),
	\quad
	\Acal_{x_0'}
	\mapsto
	(\Acal_{(\sigma x_0)^{(\lfrak)}})_{\lfrak\in\Scal,\sigma\in\Tcal_\lfrak}.
	\]
	Then we have
	\begin{theorem}\label{R_{S,T}^G surjective}
		We assume the following
		\begin{enumerate}
			\item 
			(\ref{condition on similitude of I_x}) holds;

			\item 
			$G^1_\Q$ is simply connected;

			\item 
			$\Pbf G^1_\Q(\Q_p)$ is simple and non-commutative;

			\item 
			for any distinct $\lfrak,\lfrak'\in\Scal$, the subgroups $I_{x_0^{(\lfrak)}}(\Q)\bigcap U^p$ and $I_{x_0^{(\lfrak')}}(\Q)\bigcap U^p$ of $U_p$ are \emph{not} commensurable. 
		\end{enumerate}
		Then the image $\Rbf^G_{\Scal,\Tcal}(\Hcal_p^G(\Acal_{x_0}))$ is equal to
		\[
		\prod_{\lfrak\in\Scal}\prod_{i=1}^{r(\lfrak)}\widetilde{\Delta}^{\Tcal_{\lfrak,i}}(\Hcal_p^G(\Acal_{x_0^{(\lfrak)}})).
		\]
		Here $\widetilde{\Delta}^{\Tcal_{\lfrak,i}}$ is defined in the same way as (\ref{twisted diagonal map}).
	\end{theorem}

	By (\ref{I_x and I_x^1}) and (\ref{I_x^1 and PI_x^1}), Theorem \ref{R_{S,T}^G surjective} follows from (\emph{cf.} Theorem \ref{surjectivity of Pi_R})
	\begin{theorem}
		Keep the assumptions as in Theorem \ref{R_{S,T}^G surjective}, then the closure of the image of the simultaneous \emph{projection} map
		\[
		\Pbf G^1(\Q_p)
		\to
		\prod_{\lfrak\in\Scal}\prod_{\sigma\in\Tcal_\lfrak}
		\Pbf\Gamma_p^1(\lfrak)_\sigma\backslash\Pbf G^1(\Q_p)
		\]
		is equal to
		\[
		\prod_{\lfrak\in\Scal}\prod_{i=1}^{r(\lfrak)}
		\Pbf\Gamma_p^1(\lfrak)_i\backslash
		\left(
		\Pbf\Gamma_p^1(\lfrak)_i\Delta^{\Tcal_{\lfrak,i}}(\Pbf G^1(\Q_p))
		\right),
		\]
		where $\Pbf\Gamma_p^1(\lfrak)_i
		=\prod_{\sigma\in\Tcal_{\lfrak,i}}\Pbf\Gamma_p^1(\lfrak)_\sigma$ and $\Delta^{\Tcal_{\lfrak,i}}\colon \Pbf G^1(\Q_p)\to \Pbf G^1(\Q_p)^{\Tcal_{\lfrak,i}}$ is the diagonal embedding.
	\end{theorem}
	\begin{proof}
		Note that $G^1(\Q_p)=I_{x_0^{(\lfrak)}}^1(\Q_p)$ for any $\lfrak\in\Scal$. We have the following partition of $\Tcal$
		\[
		\Tcal=
		\bigsqcup_{\lfrak\in\Scal}\bigsqcup_{i=1}^{r(\lfrak)}\Tcal_{\lfrak,i},
		\]
		such that $\sigma,\sigma'\in\Tcal$ lies in the same $\Tcal_{\lfrak,i}$ if and only if $\sigma,\sigma'\in\Tcal_\lfrak$ and $\Gamma_p(\lfrak)_\sigma$ and $\Gamma_p(\lfrak)_{\sigma'}$ are commensurable: if $\sigma,\sigma'\in\Tcal_\lfrak$, we use Theorem \ref{commensurable-Hodge case}; otherwise we use the assumption on $I_{x_0^{(\lfrak)}}(\Q)\bigcap U^p$  and $I_{x_0^{(\lfrak')}}(\Q)\bigcap U^p$.

		Since $G^1$ is simply connected, $\Pbf G^1(\Q_p)$ is generated by one-parameter adjoint unipotent subgroups (in the sense of \cite{Ratner1995}). Moreover, $\Pbf G^1(\Q_p)$ is simple and non-commutative. Thus, by Proposition \ref{closure of diagonal map}, the closure of the image of map in the theorem is of the form $\Pbf\Gamma\backslash\Pbf\Gamma H$ for some closed subgroup $H$ of $\prod_{\lfrak\in\Scal,\sigma\in\Tcal_\lfrak}\Pbf G^1(\Q_p)$. Here $\Gamma=\prod_{\lfrak\in\Scal,\sigma\in\Tcal_\lfrak}\Gamma_p^1(\lfrak)_\sigma$. Moreover, $H$ is a product of diagonals
		\[
		H=\prod_{\lfrak\in\Scal}\prod_{i=1}^{r(\lfrak)}
		\Delta^{\Tcal_{\lfrak,i}}(\Pbf G^1(\Q_p)).
		\]
		This proves the theorem.
	\end{proof}

	\begin{remark}
		We can also formulate a version of Theorem \ref{R_{S,T}^G surjective} for prime-to-$N$ Hecke orbits of $\Acal_{x_0}$, which we leave to the interested reader. The proof is the same as Theorem  \ref{surjectivity of Pi_R^N}.
	\end{remark}

	More generally, we have the following (\emph{cf.} Theorem \ref{R_{S,T}^G surjective})
	\begin{theorem}\label{R_{S,T}, not simple}
		We assume the following
		\begin{enumerate}
			
			\item 
			(\ref{condition on similitude of I_x}) holds;

			\item 
			$G^1$ is simply connected;

			\item 
			there is a decomposition 
			$\Pbf G^1(\Q_p)=\prod_{j=1}^tG^{(j)}$ into subgroups with each $G^{(j)}$ simple and non-commutative;

			\item 
			for distinct $\lfrak,\lfrak'\in\Scal$, the subgroups $I_{x_0^{(\lfrak)}}(\Q)\bigcap U^p$ and $I_{x_0^{(\lfrak')}}(\Q)\bigcap U^p$ are \emph{not} commensurable;

			\item 
			for each $j=1,\cdots,t$ and $\lfrak\in\Scal$, $\Gamma_p(\lfrak)G^{(j)}$ is dense in $\Pbf G^1(\Q_p)$.

		\end{enumerate}
		Then the image $\Rbf_{\Scal,\Tcal}(\Hcal_p^G(\Acal_{x_0}))$ is equal to
		\[
		\prod_{\lfrak\in\Scal}
		\prod_{i=1}^{r(\lfrak)}
		\widetilde{\Delta}^{\Tcal_{\lfrak,i}}(\Hcal_p^G(\Acal_{x_0^{(\lfrak)}})).
		\]
	\end{theorem}
	\begin{proof}
		The proof is the same as Theorem \ref{R_{S,T}^G surjective} by applying Proposition \ref{closure of diagonal map, not simple} instead of Proposition \ref{closure of diagonal map}, except that we need to verify that the assumption in Proposition \ref{closure of diagonal map, not simple} is true, which is ensured by (5) in the theorem.
	\end{proof}
	
	\begin{remark}
		We leave to the interested reader to formulate and prove the prime-to-$N$ version of the the above theorem, just like Theorem \ref{R_{S,T}^N is an isomorphism}.
	\end{remark}

	\section{Galois orbit of CM abelian varieties}\label{CM abelian varieties}
	In this section, we apply Theorem \ref{R_{S,T} is an isomorphism} to CM abelian varieties.
	We first review the main theorems of complex multiplication, following closely \cite{Lang1983}. We refer the reader to \emph{loc.cit} for more details and explanation of some notions below. We start with the case of simple abelian varieties with CM, and then abelian varieties which is isotypic over $\overline{\mathbb{Q}}$, at last we review the theory of CM for a general CM abelian variety. In the end, we give a description of the image of simultaneous supersingular reductions of Galois conjugates of CM abelian varieties.

	\textbf{(1) The case of simple abelian varieties.}
	Let $\uA=(A,\lambda)$ be a \emph{simple} abelian variety over $\overline{\mathbb{Q}}$ of dimension $n$ with CM given by
	\[
	\iota\colon E\simeq\mathrm{End}_{\overline{\mathbb{Q}}}(\uA)^\circ,
	\]
	where $E$ is a CM \emph{field}. In particular $[E\colon\mathbb{Q}]=2n$, $\iota\colon E\simeq\mathrm{End}_{\overline{\mathbb{Q}}}(\uA)^\circ
	$ and $(A,\iota)$ gives rise to a CM type $(E,\Phi)=(E,\Phi_1,\cdots,\Phi_n)$ where $\Phi_i\colon E\to\mathbb{C}$ are pairwise non-conjugate complex embeddings of $E$. We write $(E',\Phi')=(E',\Phi'_1,\cdots,\Phi'_n)$ for the reflex CM type of $(E,\Phi)$, where $E'$ is a subfield of $\mathbb{C}$. Then we have the type norm associated to $\Phi'$
	\[
	\Nm_{\Phi'}
	\colon
	E'\to E,
	\quad
	x\mapsto\prod_{\sigma\in\Phi'}\sigma(x),
	\]
	which induces the type norm on idèles of $E'$
	\[
	\Nm_{\Phi'}
	\colon
	(E')^\times\backslash\A_{E',f}^\times
	\to
	E^\times\backslash\A_{E,f}^\times.
	\]
	By taking the closure $\overline{(E')^\times}$ (resp. $\overline{E^\times}$) of $(E')^\times$ (resp. $E^\times$) inside $\A_{E',f}^\times$ (resp. $\A_{E,f}^\times$), we get
	\[
	\Nm_{\Phi'}
	\colon
	\overline{(E')^\times}\backslash\A_{E',f}^\times
	\to
	\overline{E^\times}\backslash\A_{E,f}^\times.
	\]
	Recall $\bfG_{\uA}(R)$ consists of $g\in(\mathcal{O}\otimes R)^\times$ such that $g'g=\mu(g)\in R^\times$ where $g'=\lambda^{-1}g^\vee\lambda$. The type norm factors through $\bfG_{\uA}(\A_f)$:
	\[
	\Nm_{\Phi'}
	\colon
	\overline{(E')^\times}\backslash\A_{E',f}^\times
	\to
	\overline{\bfG_{\uA}(\mathbb{Q})}
	\backslash
	\bfG_{\uA}(\A_f).
	\]

	We write the Artin reciprocity map as 
	\[
	\Art_{E'}
	\colon
	\Gal((E')^{\mathrm{ab}}/E')
	\simeq
	\overline{(E')^\times}
	\backslash
	\A_{E',f}^\times.
	\]
	Then we denote the composition $\rho_{(E',\Phi')}:=\Nm_{\Phi'}\circ\Art_{E'}$
	\[
	\begin{tikzcd}
		\Gal((E')^\mathrm{ab}/E')
		\arrow[r,"\Art_{E'}"]
		\arrow[rd,"\rho_{(E',\Phi')}"']
		&
		\overline{(E')^\times}\backslash\A_{E',f}^\times
		\arrow[d,"\Nm_{\Phi'}"]
		\\
		&
		\overline{\bfG_{\uA}(\mathbb{Q})}\backslash\bfG_{\uA}(\A_f)
	\end{tikzcd}
	\]
	This map is compatible with the Galois action of $\Gal((E')^{\mathrm{ab}}/E')$ on $\uA$ by the main theorems of CM (\cite[§3.6]{Lang1983}):  more precisely, if we fix an analytic parametrisation
	\[
	\theta\colon\mathbb{C}^n/\Phi(\mathfrak{a})\simeq A(\mathbb{C}),
	\]
	where $\mathfrak{a}$ is a lattice in $E$ ($\Phi\colon E\to\mathbb{C}^n$ is the map sending $x$ to $(\Phi_1(x),\cdots,\Phi_n(x))$), then for any $\sigma\in\Gal((E')^\mathrm{ab}/E')$, we have the following commutative diagram
	\[
	\begin{tikzcd}
		E/\mathfrak{a}
		\arrow[r,"\theta\circ\Phi"]
		\arrow[d,"\rho_{(E',\Phi')}(\sigma^{-1})"']
		&
		A(\overline{\mathbb{Q}})_{\mathrm{tor}}
		\arrow[d,"\sigma"]
		\\
		E/\rho_{(E',\Phi')}(\sigma^{-1})\mathfrak{a}
		\arrow[r,"\theta_\sigma\circ\Phi"]
		&
		\sigma(A)(\overline{\mathbb{Q}})_{\mathrm{tor}}
	\end{tikzcd}
	\]

	\textbf{(2) The case of isotypic abelian varieties.}
	Fix a positive integer $d$ and write $\uA'=\uA^d$. 	We have an embedding
	\[
	\iota'
	\colon
	E^d\hookrightarrow
	\mathrm{End}_{\overline{\mathbb{Q}}}(\uA')
	=
	\mathrm{Mat}_d(\mathrm{End}_{\overline{\mathbb{Q}}}(\uA))
	\]
	and thus $(\uA',\iota')$ gives rise to a CM type $(E^d,\Phi^d)$. We have an embedding
	\[
	\bfG_{\uA}\hookrightarrow\bfG_{\uA'}.
	\]
	One sees easily that $\bfG_{\uA}$ is mapped into the center $Z_{\bfG_{\uA'}}$ of $\bfG_{\uA'}$.

	The reflex type of $(E^d,\Phi^d)$ is $((E')^d,(\Phi')^d)$, so we have the composition map $\rho_{((E')^d,(\Phi')^d)}$ defined by the following commutative diagram
	\[
	\begin{tikzcd}
		\Gal((E')^\mathrm{ab}/E')
		\arrow[d,"\rho_{((E')^d,(\Phi')^d)}"']
		\arrow[rr,"\Art_{E'}"]
		&&
		\overline{(E')^\times}\backslash\A_{E',f}^\times
		\arrow[d,"\mathrm{diag}"]
		\\
		(\overline{E^\times}\backslash\A_{E,f}^\times)^d
		&&
		(\overline{(E')^\times}\backslash\A_{E',f}^\times)^d
		\arrow[ll,"\Nm_{\Phi'}^d=\Nm_{(\Phi')^d}"]
	\end{tikzcd}
	\]
	From this we get the homomorphism
	\[
	\rho_{((E')^d,(\Phi')^d)}
	\colon
	\Gal((E')^\mathrm{ab}/E')
	\rightarrow
	\overline{Z_{\bfG_{\uA'}}(\mathbb{Q})}
	\backslash
	Z_{\bfG_{\uA'}}(\A_f).
	\]
	This map is also compatible with the action of $\Gal((E')^\mathrm{ab}/E')$ on $\uA'$, just as in the case of simple abelian varieties.

	\textbf{(3) The case of general abelian varieties.}
	Suppose $\uA_0=(A_0,\lambda_0,\overline{\eta})$ is a polarized abelian variety with $U$-level structure where $U\subset\Isom_K((A_0,\lambda_0)\pdiv,(A_0,\lambda_0)\pdiv)$ is a compact open subgroup. We assume that the polarized abelian variety $(A_0,\lambda_0)$ is CM, in other words, it is isogenous over $\overline{\mathbb{Q}}$ to $\prod_{i=1}^r\uA_i^{n_i}$, where each $\uA_i$ is a simple polarized abelian variety over $\overline{\mathbb{Q}}$ (not isogenous to each other) which has CM given by $E_i\simeq\mathrm{End}_{\overline{\mathbb{Q}}}(\uA_i)^\circ$ where $E_i$ is a CM field. Then we have
	$\mathrm{End}_{\overline{\mathbb{Q}}}(\uA)^\circ
	\simeq
	\prod_{i=1}^r\mathrm{Mat}_{n_i}(E_i)$.
	Set
	\[
	E_0=\prod_{i=1}^rE_i^{n_i},
	\]
	a semi-simple CM algebra over $\mathbb{Q}$.
	Then the embedding $E_0\hookrightarrow\mathrm{End}_{\overline{\mathbb{Q}}}(\uA_0)^\circ$ gives rise to a CM type $(E_0,\Phi_0)$ with $\Phi_0=\prod_{i=1}^r\Phi_i^{n_i}$. Its reflex type is $(E_0'=\prod_{i=1}^r(E_i')^{n_i},\Phi_0'=\prod_{i=1}^r(\Phi_i')^{n_i})$. Write the composition field
	\[
	E'
	=
	E_1'E_2'\cdots E_r'\subset\mathbb{C}.
	\]
	So for each $i=1,\cdots,r$, we have a canonical map
	$\Nm_{E'/E_i'}\colon
	\Gal((E')^\mathrm{ab}/E')
	\to
	\Gal((E_i')^\mathrm{ab}/E_i')$.
	We define the product map
	$\widetilde{\rho}_{(E_0',\Phi_0')}
	=
	\prod_{i=1}^r\rho_{((E_i')^d,(\Phi_i')^d)}\circ\Nm_{E'/E_i'}$ such that the following diagram commutes
	\[
	\begin{tikzcd}
		\Gal((E')^\mathrm{ab}/E')
		\arrow[rr,"\prod_{i=1}^r\Nm_{E'/E_i'}"]
		\arrow[drr,"\widetilde{\rho}_{(E_0',\Phi_0')}"']
		&&
		\prod_{i=1}^r
		\Gal((E_i')^\mathrm{ab}/E_i')
		\arrow[d,"\prod_{i=1}^r\rho_{((E_i')^d,(\Phi_i')^d)}"]
		\\
		&&
		\prod_{i=1}^r
		(\overline{E_i^\times}\backslash\A_{E_i,f}^\times)^{n_i}
	\end{tikzcd}	
	\]
	Now the main theorems of complex multiplication gives
	\begin{proposition}
		The image of the map $\widetilde{\rho}_{(E_0',\Phi_0')}$ is contained in
		$\overline{Z_{\bfG_{\uA_0}}(\mathbb{Q})}
		\backslash Z_{\bfG_{\uA_0}}(\A_f)$. Moreover, if we fix an analytic parametrisation $\theta_0\colon\mathbb{C}^{\mathrm{dim}(A_0)}/\Phi_0(\mathfrak{a}_0)\simeq A_0(\mathbb{C})$ where $\mathfrak{a}_0$ is a lattice in $E_0$, then for any $\sigma\in\Gal((E')^\mathrm{ab}/E')$, we have the following commutative diagram
		\[
		\begin{tikzcd}
			E_0/\mathfrak{a}_0
			\arrow[r,"\theta_0\circ\Phi_0"]
			\arrow[d,"\widetilde{\rho}_{(E_0',\Phi_0')}(\sigma^{-1})"']
			&
			A_0(\overline{\mathbb{Q}})_{\mathrm{tor}}
			\arrow[d,"\sigma"]
			\\
			E_0/\rho_{(E_0',\Phi_0')}(\sigma^{-1})\mathfrak{a}_0
			\arrow[r,"\theta_\sigma\circ\Phi_0"]
			&
			\sigma(A_0)(\overline{\mathbb{Q}})_{\mathrm{tor}}
		\end{tikzcd}
		\]
	\end{proposition}

	Now we fix a finite non-empty set $\mathcal{S}$ of primes $\ell\neq p$ as in §\ref{Reduction of p-adic Hecke orbit of A} such that $\uA_0$ has supersingular good reduction at $v(\ell)$ for all $\ell\in\mathcal{S}$. For each $\ell\in\mathcal{S}$, we fix a non-empty finite set $\widetilde{\mathcal{T}}_\ell$ of elements $\sigma$ in $\Gal((E')^\mathrm{ab}/E')$ such that
	$\widetilde{\rho}_{(E_0',\Phi_0')}(\sigma)$ lies in  $\overline{Z_{\bfG_{\uA_0}}(\mathbb{Q})}\backslash Z_{\bfG_{\uA_0}}(\A_f)$. Then we put
	$\widetilde{\mathcal{T}}=\{\widetilde{\mathcal{T}}_\ell|\ell\in\mathcal{S}\}$.
	For each $\sigma\in\widetilde{\mathcal{T}}_\ell$, we fix an element $\sigma'\in Z_{\bfG_{\uA_0}}(\mathbb{Q}_p)$ such that the images of $\sigma$ and $\sigma'$ in
	\[
	Z_{\bfG_{\uA_0}}(\mathbb{Q})\backslash Z_{\bfG_{\uA_0}}(\A_f)/Z_{\bfG_{\uA_0}}(\A_f^{(p)})
	\simeq
	Z_{\bfG_{\uA_0}}(\mathbb{Q})\backslash Z_{\bfG_{\uA_0}}(\mathbb{Q}_p)
	\]
	are the same via the following natural maps
	\[
	\begin{tikzcd}
		&&
		\sigma'\in Z_{\bfG_{\uA_0}}(\mathbb{Q}_p)
		\arrow[d]
		\\
		\sigma\in\Gal(\widetilde{E}^\mathrm{ab}/\widetilde{E})
		\arrow[r,"\widetilde{\rho}_{(E_0',\Phi_0')}"]
		&
		\overline{Z_{\bfG_{\uA_0}}(\mathbb{Q})}
		\backslash
		Z_{\bfG_{\uA_0}}(\A_f)
		\arrow[r]
		&
		Z_{\bfG_{\uA_0}}(\mathbb{Q})\backslash Z_{\bfG_{\uA_0}}(\A_f)/Z_{\bfG_{\uA_0}}(\A_f^{(p)})
	\end{tikzcd}
	\]
	Then we put $\mathcal{T}_\ell
	:=\{\sigma'|\sigma\in\widetilde{\mathcal{T}}_\ell\}$, which is in bijection with
	$\widetilde{\mathcal{T}}_\ell$.
	The partition $\mathcal{T}_\ell=\bigsqcup_{i=1}^{r(\ell)}\mathcal{T}_{\ell,i}$ in (\ref{partition of T_l}) gives rise to a partition of $\widetilde{\mathcal{T}}_\ell$ under the above bijection
	\[
	\widetilde{\mathcal{T}}_\ell
	=
	\bigsqcup_{i=1}^{r(\ell)}\widetilde{\mathcal{T}}_{\ell,i}.
	\]

	We have thus the simultaneous reduction map
	\[
	\bfR_{\mathcal{S},\widetilde{\mathcal{T}}}
	\colon
	\mathcal{H}_p(\uA_0)
	\to
	\prod_{\ell\in\mathcal{S}}
	\prod_{\sigma\in\widetilde{\mathcal{T}}}
	\mathcal{H}_p(\uA_0),
	\quad
	\uA
	\mapsto
	((\sigma\uA)_{\ell})_{\ell\in\mathcal{S},\sigma\in\widetilde{\mathcal{T}}}.
	\]
	For a subset $\widetilde{\mathcal{T}}'$ of $\widetilde{\mathcal{T}}_\ell$, we have the twisted diagonal map $\widetilde{\Delta}^{\widetilde{\mathcal{T}}'}$, just like $\widetilde{\Delta}^{\mathcal{T}'}$ as in (\ref{twisted diagonal map}). Then Theorem \ref{R_{S,T} is an isomorphism} implies the following theorem, of which Theorem \ref{main theorem-3} is a special case:
	\begin{theorem}\label{surjectivity of Galois orbit}
		We have
		\[
		\mathrm{Im}(\bfR_{\mathcal{S},\widetilde{\mathcal{T}}})
		=
		\prod_{\ell\in\mathcal{S}}\prod_{i=1}^{r(\ell)}
		\widetilde{\Delta}^{\widetilde{\mathcal{T}}_{\ell,i}}(\mathcal{H}_p(\uA_{0}^{(\ell)})).
		\]
	\end{theorem}

	\section{Heegner points on Shimura curves and Mazur's conjecture}\label{Heegner points and Mazur's conjecture}
	In this section, we apply the previous results to Heegner points on Shimura curves. We first review some facts concerning Shimura curves, following closely \cite{Howard2004b} and \cite[§3]{CornutVatsal2007}.

	We fix a CM number field $E$ and let $F$ be the maximal totally real subfield of $E$. Write $d=[F:\Q]$. We fix a real embedding
	\[
	\xi\colon F\hookrightarrow\R,
	\]
	an ideal $\Nfrak$ of $\Ocal_F$ which is coprime to the relative discriminant $D_{E/F}$ of $E/F$. We write
	\[
	\epsilon_{E/F}\colon
	\A_F^\times/F^\times\to\{\pm1\}
	\]
	for the quadratic character attached to $E/F$. We assume the following weak Heegner hypothesis
	\begin{taggedtheorem}{(wH)}\label{weak Heegner hypothesis}
		$\epsilon_{E/F}(\Nfrak)=(-1)^{d-1}$.
	\end{taggedtheorem}
	Under \ref{weak Heegner hypothesis}, there is a unique quaternion algebra $B$ over $F$ which is ramified exactly at the primes $\Pfrak$ of $F$ that are inert in $E$ with $\mathrm{ord}_{\Pfrak}(\Nfrak)$ odd and at all the archimedean places of $F$ except $\xi$. We write $\Nfrak_B$ for the product of primes of $F$ where $B$ is ramified. We fix an isomorphism
	\[
	B\otimes_\Q\R
	\simeq
	\mathrm{M}_2(F_\xi)\oplus\Hbf^{d-1},
	\]
	where $\Hbf$ is the Hamiltonian quaternion algebra and $F_\xi$ is the completion of $F$ along $\xi$. In the following, we will exclude the case $B=\mathrm{M}_2(\Q)$ (which is already treated in \cite{Cornut2002}).\footnote{As the reader can see, our arguments below proving Theorems \ref{R_{S,T}^Gfrak} and \ref{Mazur's conjecture} covers also the case of modular curves. The only place where we use the exclusion of modular curves is in the definition of the map $ Sh_{U,F}\to J_{U,F}$, which does not affect our argument.} We view
	\[
	\G=B^\times
	\]
	as an algebraic group over $F$. We write $\Hfrak^\pm=\C\backslash\R$ for the union of upper and lower half planes. We let $\G(\A_{F,\infty})=\G(F\otimes_\Q\R)$ act on $\Hfrak^\pm$ via the projection to the $\xi$-component $\G(\A_{F,\infty})\to \G(F_\xi)\simeq\GL_2(\R)$. We then write $U_\infty$ for the stabilizer in $\G(\A_{F,\infty})$ of $i\in\Hfrak^\pm$. So we have an identification
	\[
	\G(\A_{F,\infty})/U_\infty\simeq\Hfrak^\pm.
	\]
	We have a section of the natural projection map $\G(\A_{F,\infty})\to\Hfrak^\pm$
	\[
	s\colon
	\Hfrak^\pm\to \G(\A_{F,\infty}),
	\]
	such that the $\xi$-component of $s(x+iy)$ is $\begin{pmatrix}
		y & x \\ 0 & 1
	\end{pmatrix}$, while the other components are trivial.
	
	By construction, for any place $v$ of $F$, if $B$ is ramified at $v$, then $E\otimes_FF_v$ is a field. Thus we have an embedding
	\[
	\kappa
	\colon
	E\hookrightarrow B.
	\]
	Moreover, there is a unique point $w(\kappa)\in\Hfrak^\pm$ that is fixed by $\kappa(x)$ for any $x\in E^\times$. We normalize $\kappa$ in such way that for any $x\in E^\times$, we have
	\[
	\kappa(x)
	\begin{pmatrix}
		w(\kappa) \\ 1
	\end{pmatrix}
	=
	x
	\begin{pmatrix}
		w(\kappa) \\ 1
	\end{pmatrix},
	\]
	where on the left $\kappa(x)$ is viewed as a matrix in $\GL_2(F_\xi)$ while on the right $x$ is viewed as a scalar in $\C^\times=(E\otimes_FF_\xi)^\times$ (\cite[§1.2]{Howard2004b}).

	We fix a maximal order $\Ocal_B$ of $B$ containing $\kappa(\Ocal_E)$ and write $R=\kappa(\Ocal_E)+\kappa(\Nfrak)\Ocal_B$. Then $R$ has reduced discriminant equal to $\Nfrak$. Under the identification $B(\A_{F,f})^\times=\G(\A_{F,f})$, we write $U$ for the image of $(R\otimes_\Z\widehat{\Z})^\times$ in $\G(\A_{F,f})$. The complex Shimura curve of level $U$ is given by
	\[
	Sh_U(\C)
	=
	\G(F)\backslash \G(\A_F)/Z_\G(\A)U_\infty U
	=
	\G(F)\backslash
	\left(
	\Hfrak^\pm\times \G(\A_{F,f})/Z_\G(\A_{F,f})U_f
	\right).
	\]
	It has a canonical model $ Sh_{U,F}$ over $F$, which is smooth, connected and projective. Since we exclude the case $B=\mathrm{M}_2(\Q)$, $ Sh_{U}(\C)$ is compact. It is well-known that $(\Res^F_\Q(\G),\Hfrak^\pm)$ is a Shimura datum of Hodge type (\cite{Mumford1969}). We write in the following $G=\Res^F_\Q(\G)$.

	For $(z,g)\in\Hfrak^\pm\times \G(\A_{F,f})$, we write $[(z,g)]$ for its image in $ Sh_{U}(\C)$. We let the normalizer of $U$ in $\G(\A_{F,f})$ act on $ Sh_{U}(\C)$ by the formula
	\[
	J(u)[(z,g)]:=[(z,gu^{-1})],
	\quad
	\forall
	u\in U \text{and } [(z,g)]\in Sh_U(\C).
	\]
	Write $\Nrm\colon \G(\A_F)\to\A_F^\times$ for the reduced norm. Then the set $\pi_0( Sh_{U}(\C))$ of connected components of $ Sh_{U}(\C)$ can be identified with $F^\times\backslash\A_F^\times/\Nrm(Z_\G(\A_F)U_\infty U)$. Write $F_U/F$ for the abelian extension such that under the Artin reciprocity, there is an isomorphism
	\[
	\Gal(F_U/F)\simeq
	F^\times\backslash\A_F^\times/\Nrm(Z_\G(\A_F)U_\infty U).
	\]
	We identify these two groups and define the map
	\[
	\GNrm\colon
	\G(\A_{F,f})\to\Gal(F_U/F),
	\quad
	g\mapsto\Nrm(g)^{-1}.
	\]
	We let $\Gal(F_U/F)$ act on $\pi_0( Sh_{U}(\C))$ via $\GNrm$ (\cite[§1.2]{Howard2004b})
	\[
	\begin{tikzcd}
		Sh_{U}(\C)
		\arrow[r,"J(u)"]
		\arrow[d,"\Nrm"']
		&
		Sh_{U}(\C)
		\arrow[d,"\Nrm"]
		\\
		\pi_0( Sh_{U}(\C))
		\arrow[r,"\GNrm(u)"]
		&
		\pi_0( Sh_{U}(\C))
	\end{tikzcd}
	\]

	We view $T=E^\times$ as an algebraic group over $F$, which is also a maximal torus of $\G$ via the embedding $\kappa$. Then we put
	\[
	\CM_E=T(F)\backslash \G(\A_{F,f})/Z_\G(\A_{F,f}),
	\]
	the set of CM points by $E$. We have a map
	\[
	\CM_E\to Sh_{U}(\C),
	\quad
	g\mapsto[(w(\kappa),g)].
	\]
	The image of this map is the set of CM points on $ Sh_{U}(\C)$. We let $\Gal(E^\ab/E)$ act on the set of CM points by the formula
	\begin{equation}\label{Gal(E^ab/E) acts on CM_E}
		[(w(\kappa),g)]^{[s,E]}:=[(w(\kappa),s\cdot g)],
		\quad
		\forall
		s\in T(\A_{F,f}).
	\end{equation}
	Here $[-,E]\colon T(F)\backslash T(\A_{F,f})\simeq\Gal(E^\ab/E)$ is the Artin map. It is easy to see that $Z_\G(\A_{F,f})=\A_{F,f}^\times$ acts trivially on any CM point.

	For any CM point $x=[(w(\kappa),g)]$, the endomorphism ring of $x$ is the preimage of $R\otimes_\Z\widehat{\Z}$ under the map $E\to B\otimes_\Z\widehat{\Z}$, which sends $h$ to $g^{-1}hg$. This endomorphism ring is an order of $E$ of the form $\Ocal_{\Cfrak_x}=\Ocal_F+\Cfrak_x\Ocal_E$ for some integral ideal $\Cfrak_x\subset\Ocal_F$ (the conductor of $x$). We write $T[\Cfrak_x]=\kappa(\widehat{\Ocal}_{\Cfrak_x}^\times)\subset T(\A_{F,f})$. We write $E[\Cfrak]$ for the ring class field of $E$, the abelian extension of $E$ corresponding to $T[\Cfrak]Z_\G(\A_{F,f})$ via class field theory. This is a Galois extension of $F$ and is the field of definition of $x$.

	For each prime $\lfrak$ of $\Ocal_F$ prime to $\Nfrak D_{E/F}$, we fix an isomorphism
	\begin{equation}\label{R_ell and M_2(O_{F,ell})}
		R_\lfrak\simeq\mathrm{M}_2(\Ocal_{F,\lfrak})
	\end{equation}
	such that $\kappa(\Ocal_{E,\lfrak})$ is identified with the set of diagonal matrices $\begin{pmatrix}
		x & 0 \\ 0 & y
	\end{pmatrix}$, resp. $\begin{pmatrix}
		x & yu \\ y & x
	\end{pmatrix}$ for $x,y\in\Ocal_{F,\lfrak}$ if $\lfrak$ splits in $E$, resp. $\lfrak$ is inert in $E$. Here $u$ is a fixed non-square element in $\Ocal_{F,\lfrak}^\times$.
	This extends to an isomorphism $B_\lfrak\simeq\mathrm{M}_2(F_\lfrak)$. Fix a uniformiser $\varpi_\lfrak$ of $F_\lfrak$ and for each integer $k\ge0$, define an element $h[\lfrak^k]$ in $B_\lfrak$ whose image in $\mathrm{M}_2(F_\lfrak)$ is given by
	\[
	\begin{cases*}
		\begin{pmatrix}
			\varpi_\lfrak^k & 1 \\ 0 & 1
		\end{pmatrix},
		&
		if $\lfrak$ splits in $E$;
		\\
		\begin{pmatrix}
			\varpi_\lfrak^k & 0 \\ 0 & 1
		\end{pmatrix},
		&
		if $\lfrak$ inert in $E$.
	\end{cases*}
	\]
	We view $h[\lfrak^k]$ as elements in $\G(\A_{F,f})$ via the embedding $\G(F_\lfrak)\hookrightarrow \G(\A_{F,f})$. For any integral ideal $\Pfrak=\prod_{i=1}^r\lfrak_i^{k_i}$ of $\Ocal_F$ prime to $\Nfrak D_{E/F}$, we define $h[\Pfrak]:=\prod_{i=1}^rh[\lfrak_i^{k_i}]$. We view $h[\Pfrak]$ also as elements in $\CM_E$ and write $x[\Pfrak]$ for its image in $ Sh_{U}(\C)$.
	It is well known that these $x[\Pfrak]$ are defined over $E^\ab$. Moreover, the action of $\Gal(E^\ab/E)$ on these $x[\Pfrak]$ agrees with the action in (\ref{Gal(E^ab/E) acts on CM_E}) (\cite[Theorem 1.2.2]{Howard2004b}).

	We decompose $ Sh_{U,F}\times_FF_U$ into geometric components $ Sh_{U,F}\times_FF_U=\bigsqcup_{i} Sh_{U,F,i}$ with each $ Sh_{U,F,i}$ geometrically irreducible and for any extension $F'/F$ such that $ Sh_{U,F}(F')\neq\emptyset$ must contain $F_U$. We define
	\[
	J_{U,F}:=\Res^{F_U}_F(\mathrm{Jac}( Sh_{U,F,i_0}))
	\]
	for some fixed component $ Sh_{U,F,i_0}$. Here $\mathrm{Jac}(Sh_{U,F,i_0})$ is the Jacobian of $Sh_{U,F,i_0}$. Then $J_{U,F}$ is an abelian variety defined over $F$, which has good reduction away from $\Nfrak$. There is a unique element $\hfrak\in\mathrm{Pic}( Sh_{U,F})$ (up to a constant multiple) whose degree on each geometric component $ Sh_{U,F,i}$ is constant and which satisfies $T_\Mfrak\hfrak=\degrm(T_\Mfrak)\hfrak$ for every Hecke operator $T_\Mfrak$ with $\Mfrak$ prime to $\Nfrak D_{E/F}$. This is called the Hodge class and on each geometric component $\hfrak$ is just the canonical divisor. We write $\hfrak_i$ for the restriction of $\hfrak$ to the component $ Sh_{U,F,i}$, then we have a unique morphism defined over $F$
	\[
	Sh_{U,F}\to J_{U,F},
	\]
	which, on complex points, takes $x_i\in  Sh_{U,F}(\C)$ to the divisor $dx_i-\hfrak_i\in J_{U,F}(\C)$.

	Let $A/F$ be an abelian variety with a surjective morphism of abelian varieties $\alpha\colon J_{U,F}\to A$. Composed with the map $ Sh_{U,F}\to J_{U,F}$, we have a morphism over $F$
	\[
	\pi\colon Sh_{U,F}\to J_{U,F}\to A.
	\]
	Since $ Sh_{U,F}$ is complete and connected, the morphism $\pi$ is either finite or constant, the latter of which is impossible since $\alpha$ is surjective. Thus $\pi$ is a finite morphism (\emph{cf.} \cite[Lemma 3.9]{CornutVatsal2007}).

	Now we follow the notations in §\ref{Shimura varieties of Hodge type}.
	We fix a prime $p$ that is prime to $\Nfrak D_{E/F}$ where $D_{E/F}$ is the relative discriminant of $E/F$. In particular, we have
	\begin{equation}\label{G^1(Q_p) decomposition}
		G^1(\Q_p)/Z_{G^1}(\Q_p)
		=
		\G^1(F_\pfrak)/Z_{\G^1}(F_\pfrak)\simeq\Pbf\mathrm{SL}_2(F_p)
		=
		\prod_{\pfrak|p}\Pbf\mathrm{SL}_2(F_\pfrak),
	\end{equation}
	which is a product of simple, non-commutative groups. Here $\pfrak$ runs through the primes of $F$ over $p$. We fix a CM point $x_0\in Sh_{U,F}(\overline{\Q})$ and a non-empty finite set $\Scal$ of primes $\lfrak$ different from $p$ such that $x_0$ has supersingular good reduction at all places of $\overline{\Q}$ over $\lfrak$. For each $\lfrak\in\Scal$, we fix a non-empty finite set $\Tcal_\lfrak$ of elements in $Z_{I_{x_0}}(\A_{f})=I_{x_0}(\A_{f})=
	(\Ocal_{\Cfrak_x}\otimes_\Z\widehat{\Q})^\times$. Then for the following simultaneous reduction map
	\[
	\Rbf_{\Scal,\Tcal}^G\colon
	\Hcal_p^G(\Acal_{x_0})\to
	\prod_{\lfrak\in\Scal}\prod_{\sigma\in\Tcal_\lfrak}\Hcal_p^{G}(\Acal_{x_0^{(\lfrak)}}),
	\]
	we apply Theorem \ref{R_{S,T}, not simple} and get
	\begin{theorem}\label{R_{S,T}^Gfrak}
		The image $\Rbf_{\Scal,\Tcal}^G(\Hcal_p^G(\Acal_{x_0}))$ is equal to
		\[
		\prod_{\lfrak\in\Scal}\prod_{i=1}^{r(\lfrak)}
		\widetilde{\Delta}^{\Tcal_{\lfrak,i}}(\Hcal_p^G(\Acal_{x_0^{(\lfrak)}})).
		\]
	\end{theorem}
	\begin{proof}
		We need to check that the assumptions in Theorem \ref{R_{S,T}, not simple} are satisfied:
		\begin{enumerate}
			\item 
			The condition (\ref{condition on similitude of I_x}) holds because $R$ is a maximal order in $B$ of reduced discriminant $\Nfrak$ prime to $p$;

			\item
			We have $G^1=\Res^F_\Q(\G^1)$, which is simply connected;

			\item 
			We enumerate the primes of $F$ over $p$ as $\pfrak_1,\cdots,\pfrak_t$ and write $G^{(j)}=\Pbf\mathrm{SL}_2(F_{\pfrak_j})$ for $j=1,\cdots,t$. Then we have $\Pbf \mathrm{SL}_2(F_p)=\prod_{j=1}^tG^{(j)}$ (\ref{G^1(Q_p) decomposition});

			\item 
			We prove that for distinct $\lfrak,\lfrak'\in\Scal$, $\Gamma_p(\lfrak)$ and $\Gamma_p(\lfrak')$ are not commensurable. If this is not the case, then we have
			\[
			I_{x_0^{(\lfrak)}}(\Q)
			\subset
			\Ccal_{G(\Q_p)}(\Gamma_p(\lfrak))
			=
			\Ccal_{G(\Q_p)}(\Gamma_p(\lfrak'))
			\supset
			I_{x_0^{(\lfrak')}}(\Q).
			\]
			Write $B(\lfrak)$ for the quaternion algebra over $F$ such that $I_{x_0^{(\lfrak)}/\Q}=\mathrm{Res}^F_\Q(B(\lfrak)^\times)$ as group schemes over $\Q$ (similarly for $B(\lfrak')$), then we would have
			\[
			B(\lfrak)=B(\lfrak'),
			\]
			which is impossible as they have different ramified primes.

			\item 
			We need to show that $\Gamma_p(\lfrak)G^{(j)}$ is dense in $G:=\Pbf\mathrm{SL}_2(F_p)$. We need to show that for any compact open subgroup $U'_p\subset G$, $\Gamma_p(\lfrak)G^{(j)}U'_p=G$. By strong approximation for $I_{x_0^{(\lfrak)}}^1$, we know that both $I_{x_0^{(\lfrak)}}(\Q)G^{(j)}$ and  $I_{x_0^{(\lfrak)}}(\Q)G$ is dense in $I_{x_0^{(\lfrak)}}(\widehat{\Q})$. In particular, take a compact open subgroup $V=\prod_qV_q$ of $I_{x_0^{(\lfrak)}}(\widehat{\Q})$ such that $V^{(p)}=U^{(p)}$ and $V_p=U_p'$. Then we have
			\[
			I_{x_0^{(\lfrak)}}(\Q)G^{(j)}V
			=I_{x_0^{(\lfrak)}}(\widehat{\Q})
			=I_{x_0^{(\lfrak)}}(\Q)GV.
			\]
			This gives immediately the following, which is what we need to prove:
			\[
			\left(I_{x_0^{(\lfrak)}}(\Q)\bigcap V^{(p)}\right)G^{(j)}V_p=G.
			\]

		\end{enumerate}
	\end{proof}

	We assume that the CM point $x_0=[(z,g)]\in Sh_{U}(\overline{\Q})$ has trivial prime-to-$\pfrak$-component $g^\pfrak=1$. Using the description of the action of $\Gal(E^\ab/E)$ on the CM points in (\ref{Gal(E^ab/E) acts on CM_E}) and the identification in (\ref{R_ell and M_2(O_{F,ell})}) for the case $\lfrak=\pfrak$, one has inclusions
	\begin{equation}\label{Heegner point, Hecke orbit, Galois orbit of Heegne point}
		\{x[\pfrak^k]|k\in\N\}
		\subset
		\Hcal_p^G(\Acal_{x_0})
		\subset
		\{\sigma(x[\pfrak^k])|k\in\N,\,\sigma\in\Gal(E^\ab/E)\}.
	\end{equation}
	For each prime $\lfrak\in\Scal$ over $\ell$, we write $ Sh_{U}^\ssrm(\F_{v(\lfrak)})$ for the supersingular locus of $ Sh_{U}(\F_{v(\lfrak)})$ where $\F_{v(\lfrak)}=\overline{\F}_\ell$ is the residual field of $\Ocal_{\overline{\Q}}$ at the place $v(\lfrak)$. Then we have bijections (\cite[Corollary 3.13 \& §3.2.5]{CornutVatsal2007})
	\[
	Sh_{U}^\ssrm(\F_{v(\lfrak)})
	\simeq
	(B(\lfrak)^\times\bigcap U^\pfrak)
	\backslash
	(B(\lfrak)\otimes_\Q\Q_p)^\times/U_\pfrak
	\simeq
	(I_{x_0^{(\lfrak)}}(\Q)\bigcap U^p)
	\backslash
	I_{x_0^{(\lfrak)}}(\Q_p)/U_p
	\simeq
	\Hcal_p^G(\Acal_{x_0}^{(\lfrak)}).
	\]
	Using Eichler Mass Formula, we know that as $\ell\to\infty$ ($\ell$ is the prime under $\lfrak$), the cardinal $\# Sh_{U}^\ssrm(\F_{v(\lfrak)})\to\infty$ (see also \cite[§4.6]{CornutVatsal2007}). We write $t_0$ for the upper bound for the cardinals of fiber of the map $\pi\colon Sh_{U}\to A$, which is finite as $\pi$ is a finite morphism. By \cite[Lemma 4.1]{Cornut2002}, we know that $A(E[\pfrak^\infty])_\mathrm{tors}$ is finite, say, of cardinal $t_1$. We assume in the following that for each $\lfrak\in\Scal$,
	\[
	\# Sh_{U}^\ssrm(\F_{v(\lfrak)})>t_1t_2.
	\]
	Then there exist two points $x_\lfrak',x_\lfrak''\in Sh_{U}^\ssrm(\F_{v(\lfrak)})$ such that $\pi(x_\lfrak')-\pi(x_\lfrak'')$ is non-zero and does not lie in the image of $A(E[\pfrak^\infty])_\mathrm{tors}$ under the map $A(E[\pfrak^\infty])\to A(\F_{v(\lfrak)})$.

	For a prime $\ell'$ and a place $\lfrak'$ of $F$ over $\ell'$, it is well-known that any principally polarized supersingular abelian variety of dimension $>1$ over $\overline{\F}_{\ell'}$ is actually defined over $\F_{(\ell')^2}$ (\cite[Theorem 1]{IbukiyamaKatsura1994}). We write $\F_{\lfrak'}'$ for the compositum field of the residual field $\F_{\lfrak'}$ of $\Ocal_F$ at $\lfrak'$ and the field $\F_{(\ell')^2}$ (both viewed as subfields of $\overline{\F}_{\ell'}$). It follows that each point in $ Sh_{U}^\ssrm(\overline{\F}_{\ell'})$ is defined over $\F_{\lfrak'}'$.
	We then write $J_U^\ssrm(\F_{\lfrak'}')$ for the subgroup of $J_U(\F_{\lfrak'}')$ generated by $x-y$ with $x,y\in Sh_{U}^\ssrm(\F_{\lfrak'}')$.

	We write the torsion subgroup
	\[
	G_0=\Gal(E[\pfrak^\infty]/E)_\mathrm{tors},
	\]
	which is a finite abelian group. Then we have $G_0=\Gal(E[\pfrak^\infty]/H_\infty)$ where $H_\infty/E$ is the maximal anti-cyclotomic $\Z_p^r$-extension where $r=[F_\pfrak:\Q_p]$. For any ring $R$, we define an element $e=\sum_gg$ in the group ring $R[G_0]$.

	Now we state and prove Mazur's conjecture (\cite[p.203]{Mazur1983}):
	\begin{theorem}\label{Mazur's conjecture}
		Assume \ref{weak Heegner hypothesis}, then there is $n\ge0$ such that $\Trrm_{E[\pfrak^\infty]/H_\infty}(y[\pfrak^n])\notin A(H_\infty)_\mathrm{tors}$.
	\end{theorem}
	\begin{proof}
		Let $\Scal$ be as before and consist of only one prime $\lfrak$. We write $\Tcal_\lfrak$ to be a set of representatives in $I_{x_0}(\A_{f})=\A_{E,f}^\times$ of $G_0$ under the Artin reciprocity isomorphism.

		We argue by contradiction. Suppose that $\Trrm_{E[\pfrak^\infty]/H_\infty}(y[\pfrak^n])$ are torsion for all $n\ge0$, and thus their orders all divide $t_1$. By the inclusions (\ref{Heegner point, Hecke orbit, Galois orbit of Heegne point}), we know that for any $\Acal_x\in\Hcal_p^G(\Acal_{x_0})$, its images $\pi(\Acal_x)\in A(E[\pfrak^\infty])$ and $\pi(\Acal_{x^{(\lfrak)}})\in A(\F_{\lfrak}')$ as well as their Galois conjugates all have orders dividing $t_1$.

		For $x,y\in Sh_{U}^\ssrm(\F_{v(\lfrak)})$, by Theorem \ref{R_{S,T}^Gfrak}, we can choose $\Acal_a,\Acal_b\in\Hcal_p^G(\Acal_{x_0})$ such that all the components of $\Rbf_{\Scal,\Tcal}(\Acal_a)$ are equal to those of $\Rbf_{\Scal,\Tcal}(\Acal_b)$, except at the components in  $\Tcal_{\lfrak,i_0}\subset\Tcal_\lfrak$ for some $i_0\in r(\lfrak)$, where
		\[
		\Rbf_\lfrak(\sigma\Acal_a)=x
		\quad\text{and}\quad
		\Rbf_\lfrak(\sigma\Acal_b)=y,
		\quad
		\forall\sigma\in\Tcal_{\lfrak,i_0}.
		\]
		In particular, we have
		\[
		\Rbf_\lfrak(e(\pi(\Acal_a)))-\Rbf_\lfrak(e(\pi(\Acal_b)))
		=
		\left(0,\cdots,0,
		\sum_{\sigma\in\Tcal_{\lfrak,i_0}}
		\left(\Acal_{(\sigma a)^{(\lfrak)}}-\Acal_{(\sigma b)^{(\lfrak)}}\right),
		0,\cdots,0
		\right).
		\]
		Here on the right hand side, all the components are zero except the $\lfrak$-th component. Thus, fixing one element $\sigma_0\in\Tcal_{\lfrak,i_0}$, we have
		\[
		\sum_{\sigma\in\Tcal_{\lfrak,i_0}}
		\left(\Acal_{(\sigma a)^{(\lfrak)}}-\Acal_{(\sigma b)^{(\lfrak)}}\right)
		=\#(\Tcal_{\lfrak,i_0})(\pi(x)-\pi(y)).
		\]

		We set $L=E(A[q])$, which is a finite Galois extension of $F$. We fix an embedding $L\hookrightarrow\C$ and write $c\in\Gal(L/F)$ for the complex conjugation. Then we write $\Scal_c$ for the set of primes $\lfrak$ of $F$ prime to $2\Nfrak D_{E/F}t_1$ such that the Frobenius $\mathrm{Frob}_\lfrak$ in $\Gal(L/F)$ is equal to $c$. By Chebotarev's density theorem, we know $\#(\Scal_c)=\infty$. For each $\lfrak\in\Scal_c$ over a rational prime $\ell$, fix a prime $\Lfrak$ of $L$ over $\lfrak$ such that $\mathrm{Frob}_\Lfrak=c$ in $\Gal(L/F)$. Fix then a place $v(\lfrak)'$ of $E[\pfrak^\infty](A[t_1])$ over $\Lfrak$ and $v(\lfrak)''$ its restriction to $E[\pfrak^\infty]$. Then $\lfrak$ is inert in $E$, $\F_{v(\lfrak)''}\supset\F_{\lfrak}'$ and for each prime $q\mid t_1$, we have
		\[
		\dim_{\F_q}(A(\F_{v(\lfrak)''})\otimes\F_q)
		=
		2\dim(A).
		\]
		In particular,
		\[
		\# A([\F_{v(\lfrak)''}])[t_1]\leq t_1^{2\dim(A)}.
		\]

		We choose $\lfrak\in\Scal_c$ such that
		\[
		\# Sh_{U}^\ssrm(\F_{v(\lfrak)})>(t_1\cdot\#G_0)^{2\dim(A)}\cdot t_2.
		\]
		Then there exists $x,y\in Sh_{U}^\ssrm(\F_{v(\lfrak)})$ such that $\pi(\Acal_x)-\pi(\Acal_y)\in A(\F_{\lfrak}')\subset A(\F_{v(\lfrak)''})$ has order not dividing $t_1\cdot\#G_0$ (note that $\#\Tcal_{\lfrak,i_0}$ divides $\#G_0$). In particular, $\Rbf_\lfrak(e(\pi(\Acal_a)))-\Rbf_\lfrak(e(\pi(\Acal_b)))$ has order not dividing $t_1$.
		This contradicts the fact that $\pi(\Acal_{a^{(\lfrak)}})=\pi(\Acal_x)$ and $\pi(\Acal_{b^{(\lfrak)}})=\pi(\Acal_y)$ have orders dividing $t_1$, which thus finishes the proof of the theorem.
	\end{proof}


\begin{thebibliography}{widestlabel}
		\bibitem[ALMW22]{AkaLuethiMichelWieser2022}
		M.Aka, M.Luethi, P.Michel and A.Wieser,
		Simultaneous supersingular reductions of CM elliptic curves,
		Journal f\"{u}r die reine und angewandte Mathematik (Crelles Journal) 2022.786 (2022): 1-43.
		
		
		
		
		
		
		
		\bibitem[BCDT01]{BreuilConradDiamondTaylor2001}
		C.Breuil,B.Conrad,F.Diamond and R.Taylor,
		On the modularity of elliptic curves over $\Q$: wild 3-adic exercises, Journal of the American Mathematical Society 14, no. 4 (2001): 843-939.
		
		
		
		

		
		
		
		
		\bibitem[Cor02]{Cornut2002}
		C.Cornut,
		Mazur's conjecture on higher Heegner points,
		Invent.math.,
		148(2002),
		495-523.
		
		
		
		
		
		\bibitem[CO19]{ChaiOort2019}
		C.-L.Chai and F.Oort,
		The Hecke orbit conjecture: a survey and outlook, \textit{problems in arithmetic algebraic geometry}, Adv. Lect. Math 46 (2019): 235-263.
		
		
		
		
		
		
		
		
		\bibitem[CU04a]{ClozelUllmo2004a}
		L.Clozel and E.Ullmo,
		\'{E}quidistribution des points de Hecke,
		Contribution to Automorphic Forms,
		Geometry, and Number Theory,
		John Hopkins University Press,
		Maryland, 2004.
		
		
		
		
		
		
		\bibitem[CU04b]{ClozelUllmo2004b}
		L.Clozel and E.Ullmo,
		Équidistribution des points de Hecke,
		Contributions to automorphic forms, geometry, and number theory (2004): 193-254.
		
		
		
		
		
		\bibitem[CV07]{CornutVatsal2007}
		C.Cornut and V.Vatsal,
		Nontriviality of Rankin-Selberg L-functions and CM points,
		London Mathematical Society Lecture Note Series, 2007, 320: 121.
		
		
		
		
		
		
		
		
		
		
		
		

		
		
		
		
		
		\bibitem[Dis22]{Disegni2022}
		D.Disegni,
		$p$-adic equidistribution of CM points,
		Commentarii Mathematici Helvetici 97, no. 4 (2022).
		
		
		
		
		
		\bibitem[Duk88]{Duke1988}
		W.Duke,
		Hyperbolic distribution problems and half-integral weight Maass forms,
		Invent. Math. 92
		(1988), no. 1, 73–90.
		
		
		
		
		
		\bibitem[EY03]{EdixhovenYafaev2003}
		B.Edixhoven and A.Yafaev,
		Subvarieties of Shimura varieties, Annals of Mathematics 157.2 (2003): 621-645.
		
		
		
		\bibitem[GK21]{GorenKassaei2021}
		E.Goren and P.Kassaei,
		$p$-adic Dynamics of Hecke Operators on Modular Curves,
		Journal de théorie des nombres de Bordeaux 33.2 (2021): 387-431.
		
		
		
		
		
		\bibitem[HMR20]{HerreroMensaresRivera-Letelier2020}
		S.Herrero, R.Menares and J.Rivera-Letelier,
		$p$-adic distribution of CM points and Hecke orbits-I: Convergence towards the Gauss point,
		Algebra and Number Theory 14.5 (2020): 1239-1290.
		
		
		
		
		
		\bibitem[HMR21]{HerreroMensaresRivera-Letelier2021}
		S.Herrero, R.Menares and J.Rivera-Letelier,
		$p$-adic distribution of CM points and Hecke orbits-II: Linnik equidistribution on the supersingular locus,
		arXiv preprint arXiv:2102.04865 (2021).
		
		
		
		\bibitem[How04]{Howard2004}
		B.Howard,
		The Heegner point Kolyvagin system,
		Compo.Math.,
		vol 140(6),
		2004,
		pp.1439-1472.
		
		
		
		\bibitem[How04b]{Howard2004b}
		B.Howard,
		Iwasawa theory of Heegner points on abelian varieties of $\mathrm{GL}_2$-type,
		Duke Math. J. 124(1): 1-45 (15 July 2004). DOI: 10.1215/S0012-7094-04-12411-X
		
		
		
		
		\bibitem[Hum69]{Humphreys1969}
		J.E.Humphreys,
		On the automorphisms of infinite Chevalley groups,
		Canadian Journal of Mathematics 21 (1969): 908-911.
		
		
		
		
		\bibitem[Hum75]{Humphreys1975}
		J.E.Humphreys,
		\emph{Linear algebraic groups},
		Springer-Verlag, New York, 1975.
		
		
		

		
		
		\bibitem[IK94]{IbukiyamaKatsura1994}
		T.Ibukiyama and T.Katsura,
		On the field of definition of superspecial polarized abelian varieties and type numbers,
		Compositio Mathematica 91, no. 1 (1994): 37-46.
		
		
		
		\bibitem[Kis10]{Kisin2010}
		M.Kisin,
		Integral models for Shimura varieties of abelian type,
		J.A.M.S.
		\textbf{23} (2010),
		no.4,
		pp.967-1012.
		
		
		
		
		\bibitem[Kis17]{Kisin2017}
		M.Kisin,
		Mod $p$ points on Shimura varieties of abelian type,
		J.A.M.S.
		\textbf{30}(3),
		2017.
		
		\bibitem[Lan83]{Lang1983}
		S.Lang,
		\textit{Complex multiplication},
		Grundlehren der mathematischen Wissenschaften 255,
		1983,
		Springer-Verlag New York.
		
		
		
		
		\bibitem[Lan13]{Lan2013}
		K.-W.Lan,
		\textit{Arithmetic compactifications of PEL-type Shimura varieties},
		Princeton University Press, 2013.
		
		
		
		
		\bibitem[Maz83]{Mazur1983}
		B.Mazur,
		Modular Curves and Arithmetic,
		Proceedings of the International Congress of Mathematicians, Warsaw 1983, PWN (1984), pp. 185–211
		
		
		
		
		\bibitem[Mic04]{Michel2004}
		P.Michel,
		The subconvexity problem for Rankin-Selberg L-functions and equidistribution of Heegner points,
		Annals of mathematics (2004): 185-236.
		
		
		\bibitem[Mor01]{Morris2001}
		D.-W.Morris,
		\textit{Introduction to Arithmetic Groups},
		\url{https://arxiv.org/pdf/math/0106063.pdf}(2001).
		
		
		\bibitem[Mum69]{Mumford1969}
		D.Mumford,
		A note of Shimura's paper “Discontinuous groups and abelian varieties”,
		Math.Ann.181, 345-351 (1969).
		
		
		

		
		
		
		
		\bibitem[Nek07]{Nekovar2007}
		J.Nekovár,
		The Euler system method for CM points on Shimura curves,
		London Mathematical Society Lecture Note Series 320 (2007): 471.
		
		
		
		
		

		
		
		
		\bibitem[Rat95]{Ratner1995}
		M.Ratner,
		Raghunatan's conjectures for cartesian products of real and $p$-adic Lie groups,
		Duke Math.J.77,
		vol 2(1995).
		
		
		
		\bibitem[Shi78]{Shioda1978}
		T.Shioda,
		Supersingular K3 surfaces,
		In Algebraic Geometry: Summer Meeting, Copenhagen, August 7–12, 1978, pp. 564-591. Berlin, Heidelberg: Springer Berlin Heidelberg, 2006.
		
		
		
		
		\bibitem[ST68]{SerreTate1968}
		J.-P.Serre and J.Tate,
		Good reduction of abelian varieties,
		Ann.Math.\textbf{88}
		(1968),
		492-517.
		
		
		
		
		\bibitem[Ste60]{Steinberg1960}
		R.Steinberg,
		Automorphisms of finite linear groups,
		Canadian Journal of Mathematics 12 (1960): 606-615.
		
		
		
		
		\bibitem[Vat02]{Vatsal2002}
		V.Vatsal,
		Uniform distribution of Heegner points,
		Invent.math.
		148(2002).
		
		
		\bibitem[Vig80]{Vigneras1980}
		M.-F.Vignéras,
		\textit{Arithmétique des algèbres de quaternions},
		\textit{Springer Lecture Notes},
		vol.800,
		Springer-Verlag,
		1980.
		
		
		\bibitem[Wor13]{Wortmann2013}
		D.Wortmann,
		The $\mu$-ordinary locus of	Shimura varieties of Hodge type,
		preprint,
		available at
		\url{https://arxiv.org/abs/1310.6444}.
		
		
		
		
		\bibitem[Yu10]{Yu2010}
		C.-F.Yu,
		Simple mass formulas on Shimura varieties of PEL-type,
		Forum Math. 22 (2010), 565-582.
		
		
		
		
		\bibitem[Zha05]{ShouwuZhang05}
		S.-W.Zhang,
		Equidistrobution of CM-points on quaternion Shimura varieties,
		IMRN,
		2005,
		no.59.
		
		
		
		\bibitem[Zha14]{WeiZhang2014}
		W.Zhang,
		Selmer groups and the indivisibility of Heegner points,
		Cambridge Journal of Mathematics 2.2 (2014): 191-253.
		
		
	\end{thebibliography}
\end{document}